%
%
%
%
%
%
\RequirePackage{fix-cm}
\RequirePackage{etex}

\documentclass[smallcondensed]{svjour3}     

\smartqed  
\usepackage{graphicx}
\usepackage{amsmath}
\usepackage{amssymb}
\usepackage{amsfonts}
\usepackage{stmaryrd}
\usepackage{graphicx}
\usepackage{pgfplots}
\usepackage[caption=false]{subfig}
\usepackage{hhline}
\usepackage{multirow}
\usepackage{appendix}
\usepackage{xparse}
\usepackage{mathtools}
\MakeRobust{\eqref}
\usepackage{xspace}
\usepackage{ifthen}
\usepackage{enumitem}
\usepackage{url}
\usepackage{epstopdf}
\usepackage{algorithm,algpseudocode,array}
\usepackage[style=base]{caption}
\captionsetup{style=base}

\usepackage{autonum}
\ifpdf
\DeclareGraphicsExtensions{.eps,.pdf,.png,.jpg}
\else
\DeclareGraphicsExtensions{.eps}
\fi

\makeatletter
\def\@tvsp{\mathchoice{{}\mkern-3mu}{{}\mkern-3mu}{{}\mkern-3mu}{}}
\def\ltrivert{|\@tvsp|\@tvsp|}
\def\rtrivert{|\@tvsp|\@tvsp|}
\makeatother

\makeatletter
\def\@avgsp{\mathchoice{{}\mkern-6mu}{{}\mkern-6mu}{{}\mkern-6mu}{}}
\def\llaverage{\{\@avgsp\{}
\def\rraverage{\}\@avgsp\}}
\makeatother

\NewDocumentCommand{\normDG}{O{\cdot} O{j} O{}}{\ensuremath{\|\ifthenelse{\equal{#1}{}}{\cdot}{#1}\|_{DG,\ifthenelse{\equal{#2}{}}{j}{#2}}\ifthenelse{\equal{#3}{}}{}{^{#3}}}\xspace} 

\NewDocumentCommand{\normL}{O{\cdot} O{2} O{\Om} O{}}{\ensuremath{\|\ifthenelse{\equal{#1}{}}{\cdot}{#1}\|_{L^{\ifthenelse{\equal{#2}{}}{1}{#2}}(\ifthenelse{\equal{#3}{}}{\Om}{#3})}\ifthenelse{\equal{#4}{}}{}{^{#4}}}\xspace}

\NewDocumentCommand{\normH}{O{\cdot} O{1} O{\Om} O{}}{\ensuremath{\|\ifthenelse{\equal{#1}{}}{\cdot}{#1}\|_{H^{\ifthenelse{\equal{#2}{}}{1}{#2}}(\ifthenelse{\equal{#3}{}}{\Om}{#3})}\ifthenelse{\equal{#4}{}}{}{^{#4}}}\xspace}

\NewDocumentCommand{\norms}{O{\cdot} O{s} O{j} O{}}{\ensuremath{\ltrivert\ifthenelse{\equal{#1}{}}{\cdot}{#1}\|_{\ifthenelse{\equal{#2}{}}{s}{#2},\ifthenelse{\equal{#3}{}}{j}{#3}}\ifthenelse{\equal{#4}{}}{}{^{#4}}}\xspace}

\NewDocumentCommand{\Aa}{O{j} O{\cdot} O{\cdot}}{\ensuremath{\mathcal{A}_{\ifthenelse{\equal{#1}{}}{j}{#1}}(\ifthenelse{\equal{#2}{}}{\cdot}{#2},\ifthenelse{\equal{#3}{}}{\cdot}{#3})}\xspace} 

\NewDocumentCommand{\mcal}{O{} O{} O{}}{\ensuremath{\mathcal{#1}\ifthenelse{\equal{#2}{}}{}{_{#2}}\ifthenelse{\equal{#3}{}}{}{^{#3}}}\xspace}

\newcommand{\average}[1]{\ensuremath{\llaverage #1\rraverage}\xspace}  
\newcommand{\jump}[1]{\ensuremath{\llbracket #1\rrbracket}\xspace}

\newcommand{\Om}{\ensuremath{\Omega}\xspace}
\newcommand{\elem}{\ensuremath{\kappa}\xspace}

\newcommand{\h}[1]{\ensuremath{h_{#1}}\xspace}
\newcommand{\hmin}[1]{\ensuremath{h_{#1}}\xspace}

\newtheorem{assumption}[theorem]{Assumption}

\begin{document}

\title{V-cycle multigrid algorithms for discontinuous Galerkin methods on non-nested polytopic meshes
\thanks{This work has been supported by the research grant PolyNuM founded by Fondazione Cariplo and Regione Lombardia, and by the SIR Project n. RBSI14VT0S funded by MIUR.}
}

\titlerunning{V-cycle algorithms for DG methods on non-nested polytopic meshes}

\author{P. F. Antonietti \and G. Pennesi.
}

\institute{P. F. Antonietti  \at
              MOX-Laboratory for Modelling and Scientific Computing, Dipartimento di Matematica, Politecnico di Milano, Piazza Leonardo da Vinci 32, 20133 Milano, Italy.  \\
             Tel.: (+39) 02 2399 4601\\
              \email{paola.antonietti@polimi.it} 
           \and
           G. Pennesi   \at
              MOX-Laboratory for Modelling and Scientific Computing, Dipartimento di Matematica, Politecnico di Milano, Piazza Leonardo da Vinci 32, 20133 Milano, Italy.  \\
           Tel.: (+39) 02 2399 4604\\
           \email{giorgio.pennesi@polimi.it} 
}


\maketitle

\begin{abstract}
In this paper we analyse the convergence properties of V-cycle multigrid algorithms for the numerical solution of the linear system of equations arising from discontinuous Galerkin discretization of second-order elliptic partial differential equations on polytopal meshes. Here, the sequence of spaces that stands at the basis of the multigrid scheme is possibly non nested and is obtained based on employing agglomeration with possible edge/face coarsening. We prove that the method converges uniformly with respect to the granularity of the grid and the polynomial approximation degree $p$, provided that the number of smoothing steps, which depends on $p$, is chosen sufficiently large.
\keywords{Discontinuous Galerkin \and Polygonal grids \and Multi-level methods \and V-cycle \and Non-nested spaces}
\subclass{65F10 \and 65M55 \and 65N22}
\end{abstract}

\section{Introduction}
\label{sec:introduction}
The Discontinuous Galerkin (DG) method was introduced in 1973 by Reed and Hill for the discretization of hyperbolic equations \cite{ReedHill73}. Extensions of the method were quickly proposed to deal with elliptic and parabolic problems: some of the most relevant works include Arnold \cite{Ar1982}, Baker \cite{Baker77}, Nitsche \cite{Nitsche} and Wheeler \cite{Wh1978}, whose contributions put the basis for the development of the interior penalty DG methods. 
In the last 40 years the scientific and industrial community has shown an exponentially growing interest in DG methods - see for example \cite{CockKarnShu00,DiPiEr,HestWar,Riviere} for an overview. On one side, the features of DG methods have been naturally enhanced by the recent development of High Performance Computing technologies as well as the growing request for high-order accuracy. In particular, as the discrete polynomial space can be defined locally on each element of the mesh, DG methods feature a high-level of intrinsic parallelism. Moreover, the local conservation properties and the possibility to use meshes with hanging nodes make DG methods interesting also from a practical point of view. 
Recently, it has been shown that DG methods can be extended to computational grids characterized by polytopic elements, cf. Ref. \cite{AnBrMa2008,AnFaRuVe2016,AntGiaHou13,AntGiaHou14,AnHoHuSaVe2017,AnMa17,BaBoCoSu14,Basetal12,BaBoetal12,Canetal14,GiHo2014,LiDaVaYo2014,WiKuMiTaWeDa2013}. In particular, the efficient approach presented in \cite{Canetal14} is based on defining a local polynomial discrete space by making use of the bounding box of each element \cite{GiHo2014_2}: this technique together with a careful choice of the discontinuity penalization parameter permits the use of polytopal elements which can be characterized by faces of arbitrarily small measure and as shown in \cite{CaDoGe2016}, see also \cite{AnHoHuSaVe2017}, possibly by an unbounded number of faces. 

On the other hand, the development of fast solvers and preconditioners for the linear system of equations arising from high-order DG discretization is been developed. A recent strand of the literature has focused on multilevel techniques, including Schwarz domain decomposition methods, cf. Ref. \cite{AntGiaHou14,AnHoSm16}, and two-level and multigrid techniques, cf. Ref. \cite{AnHoHuSaVe2017,AntSarVer15}. The efficiency of those methods is more evident in the case of polygonal grids, because the flexibility of the element shape couples very well with the possibility to easily define agglomerated meshes, which is the key ingredient for the developing of multigrid algorithms. In \cite{AnHoHuSaVe2017} a two-level scheme and W-cycle multigrid method is developed to solve the linear system of equations arising from high-order discretization introduced in \cite{Canetal14}. One iteration of the proposed methods consists of an iterative application of the smoothing Richardson operator and the subspace correction step. In particular, the latter is based on a nested sequence of discrete polynomial spaces where the underlying polytopal grid of each subspace is defined by agglomeration. While being faster than other classical iterative methods, the agglomeration approach presents itself some limitations. When the finest grid is unstructured and characterized by polytopic elements, there is the possibility that its very small edges could be inherited by the coarser levels until the one where the linear system is solved with a direct method. In this case the presence of small faces negatively affects the condition number of the associated matrix: indeed, according to \cite{Canetal14}, the discontinuity penalization parameter is defined locally in each face as the inverse of its measure.

In this paper we aim to overcome this issue by solving the same linear system through a multilevel method characterized by a sequence of non-nested agglomerated meshes in order to make sure that the number of faces of the agglomerates does not blows up as the number of levels of our multigrid method increases. This can be achieved for example based on employing edge-coarsening techniques in the agglomeration procedures. The flexibility in the choice of the computational sub-grids leads to the definition of a non-nested multigrid method characterized by a sequence of non-nested multilevel discrete spaces, cf. Ref. \cite{BrVe1990,Zh1990,ZhZh1997}, and where the discrete bilinear forms are chosen differently on each level, cf. Ref. \cite{GoKa2003,GoKa2003_2,MoZh1995}. The first non-nested multilevel method was introduced by Bank and Dupont in \cite{BaDu1981}; a generalized framework was developed by Bramble, Pasciak and Xu in \cite{BrPaXu1991}, and then widely used in the analysis of non-nested multigrid iterations, cf. Ref. \cite{Br1993,BrKwPa1994,BrPa1992,BrPa1993,BrPa1994,BrZh2001,GoPa2000,ScZh1992,XuCh2001,XuLiCh2002}. The method of \cite{BrPaXu1991}, to whom we will refer as the BPX multigrid framework, is able to generalize also the multigrid framework that we will develop in this paper, but the convergence analysis relies on the assumption that $\Aa[j][I_{j-1}^j u][I_{j-1}^j u] \le \Aa[j-1][u][u]$, which might not be guaranteed in the DG setting, as we will see in Sect.~\ref{sec:stability}. Here $\Aa[j]$ and $\Aa[j-1]$ are two bilinear forms suitably defined on two consecutive levels, and $I_{j-1}^j$ is the prolongation operator whose definition is not trivial, differently from the nested case. For this reason the convergence analysis will be presented based on employing the abstract setting proposed by Duan, Gao, Tan and Zhang in \cite{DuanGaoTanZhang}, which permits to develop a full analysis of V-cycle multigrid methods in a non-nested framework relaxing the hypothesis $\Aa[j][I_{j-1}^j u][I_{j-1}^j u] \le \Aa[j-1][u][u]$. We will prove that our V-cycle scheme with non-nested spaces converges uniformly with respect to the discretization parameters provided that the number of smoothing steps, which depends on the polynomial approximation degree $p$, is chosen sufficiently large. This result extends the theory of \cite{AnHoHuSaVe2017} where W-cycle multigrid methods for high-order DG methods with nested spaces where proposed and analyzed.

The paper is organized as follows. In Sect.~\ref{sec:theory} we introduce the interior penalty DG scheme for the discretization of second-order elliptic problems on general meshes consisting of polygonal/polyhedral elements. In Sect.~\ref{sec:preliminary}, we recall some preliminary analytical results concerning this class of schemes. In Sect.~\ref{sec:bpxvc} we define the multilevel BPX framework for the V-cycle multigrid solver based on non-nested grids, and present the convergence analysis of the algorithm. The main theoretical results are validated through a series of numerical experiments in Sect.~\ref{sec:numerical}. In Sect.~\ref{sec:ASSmoth} we propose an improved version of the algorithm, obtained by choosing a smoothing operator based on a domain decomposition preconditioner.

\section{Model problem and its DG discretization}
\label{sec:theory}
We consider the weak formulation of the Poisson problem, subject to a homogeneous Dirichlet boundary condition: find $u\in V = H^2(\Omega)\cap H_0^1(\Omega)$ such that
\begin{equation}
\mathcal{A}(u,v) =\int_\Omega \nabla u \cdot \nabla v\ dx =\int_\Omega f v\ dx \, \qquad \forall v\in V,
\label{eq:weak}
\end{equation}
with $\Omega \subset \mathbb{R}^d$, $d = 2,3$, a convex polygonal/polyhedral domain with Lipschitz boundary and $f \in L^2(\Omega)$. The unique solution $u \in V$ of problem \eqref{eq:weak} satisfies 
\begin{equation} 
\normH[u][2][\Om] \le C \normL[f][2][\Om].
\label{eq:elliptic_reg}
\end{equation}

In view of the forthcoming multigrid analysis, let $\{\mcal[T][j]\}_{j=1}^J$ be a sequence of tessellation of the domain \Om, each of which is characterized by disjoint open polytopal elements \elem of diameter \h{\elem}, such that $\overline\Om = \bigcup_{\elem\in\mcal[T][j]}\bar\elem$, $j=1,\dots,J$. The mesh size of $\mcal[T][j]$ is denoted by $\h{j} =\max_{\elem\in\mcal[T][j]}\h{\elem}$. To each $\mcal[T][j]$ we associate the corresponding discontinuous finite element space $V_j$, defined as
\begin{equation}
V_j =\{v\in L^2(\Om):v|_\elem\in \mcal[P][p_j](\elem),\elem\in\mcal[T][j]\},
\end{equation}
where $\mcal[P][p_j](\elem)$ denotes the local space of polynomials of total degree at most $p_j\geq1$ on $\elem\in\mcal[T][j]$. 

\begin{remark}
For the sake of brevity we use the notation $x \lesssim y$ to mean $x \le Cy$, where $C>0$ is a constant independent from the discretization parameters. Similarly we write $x \gtrsim y$ in lieu of $x \ge Cy$, while $x \approx y$ is used if both $x \lesssim y$ and $x \gtrsim y$ hold.  
\end{remark}

A suitable choice of $\{\mcal[T][j]\}_{j=1}^J$ and $\{ V_j \}_{j=1}^J$ leads to the $hp$-multigrid non-nested schemes. This method is based on employing, from one side, a set of non-nested partitions $\{\mcal[T][j]\}_{j=1}^J$, such that the coarse level $\mcal[T][j-1]$ is independent from $\mcal[T][j]$, with the only constrain
\begin{equation}
\label{eq:hjhj1}
\h{j-1} \lesssim\h{j} \leq \h{j-1}\qquad \forall\ j = 2,\dots, J,
\end{equation} 
from the other side we assume that the polynomial degree vary from one level to another such that
\begin{equation}\label{eq:pjpj1}
p_{j-1} \le p_j \lesssim p_{j-1}\qquad \forall\ j = 2,\dots, J.
\end{equation}
Additional assumptions on the grids $\{\mcal[T][j]\}_{j=1}^J$ are outlined in the following paragraph.

\subsection{Grid assumptions}\label{subsection_Grid_assumption}
For any $\mcal[T][j]$, we define the \textit{faces} of the mesh \mcal[T][j], $j=1,\dots,J$, as the intersection of the $(d-1)$-dimensional facets of neighbouring elements. This implies that, for $d=2$, a \textit{face} always consists of a line segment, however for $d=3$, the \textit{faces} of \mcal[T][j] are general shaped polygons. Thereby, we assume that each facets of an element $\elem \in \mcal[T][j]$ may be subdivided into a set of co-planar $(d-1)$-dimensional simplices and we refer to them as \textit{faces}. 
In order to introduce the DG formulation, it is helpful to distinguish between boundary and interior element faces, denoted as $\mathcal{F}_j^B$ and $\mathcal{F}_j^I$, respectively. In particular, we observe that $F \subset \partial \Omega$ for $F \in \mathcal{F}_j^B$, while for any $F \in \mathcal{F}_j^I$ we assume that $F \subset \partial \elem^{\pm}$, where $\elem^{\pm}$ are two adjacent elements in $\mcal[T][j]$. Furthermore, we denoted as $\mathcal{F}_j = \mathcal{F}_j^I \cup \mathcal{F}_j^B$ the set of all mesh faces of $\mcal[T][j]$. With this notation, we assume that the sub-tessellation of element interfaces into $(d-1)$-dimensional simplices is given. Moreover, assume that the following assumptions hold, cf. \cite{CaDoGe2016,Canetal16}.

\begin{assumption}\label{ass1}
For any $j=1,\dots,J$, given $\elem \in \mcal[T][j]$ there exists a set of non-overlapping d-dimensional simplices $T_l \subset \elem$, $l=1,\dots,n_{\elem}$, such that for any face $F \subset \partial \elem$ it holds that $\overline{F} = \partial \overline{\elem} \cap \partial \overline{T_l}$ for some l, it holds $\cup_{l=1}^{n_{\elem}}\overline{T_l} \subset \overline{\elem}$, and the diameter $h_{\elem}$ of $\elem$ can be bounded by 
\begin{equation} 
h_{\elem} \lesssim \frac{d |T_l |}{|F|} \quad \forall\ l=1,\dots,n_{\elem}.
\end{equation}
\end{assumption}

\begin{assumption}\label{ass2}
For any $\elem \in \mcal[T][j],\ j=1,\dots,J$, we assume that $h_{\elem}^d \ge | \elem | \gtrsim h_{\elem}^d$, where $d=2,3$ is the dimension of $\Om$.
\end{assumption}

\begin{assumption}\label{ass3}
Every polytopic element $\elem \in \mcal[T][j],\ j=1,\dots,J$, admits a sub-triangulation into at most $m_{\elem}$ shape-regular simplices $\{ \mathfrak{s}_i \}_{ i=1}^{m_{\elem}}$, for some $m_{\elem} \in \mathbb{N}$, such that $\overline{\elem} = \cup_{i=1}^{m_{\elem}} \overline{\mathfrak{s}_i}$ and 
\begin{equation} 
|\mathfrak{s_i}| \gtrsim |\elem| \quad \forall i=1,\dots,m_{\elem},
\end{equation}
\end{assumption}

\begin{assumption}\label{ass4}
Let $\mcal[T][j]^{\#}= \{ \mathcal{K} \}$, denote a covering of $\Omega$ consisting of shape-regular d dimensional simplices $\mathcal{K}$. We assume that, for any $\elem \in \mcal[T][j]$, there exists $\mathcal{K} \in \mcal[T][j]^{\#}$ such that $\elem \subset \mathcal{K}$ and 
\begin{equation}
\max_{\elem \in \mcal[T][j]} card \bigl\{ \elem' \in \mcal[T][j]: \elem' \cap \mathcal{K} \ne \emptyset , \mathcal{K} \in \mcal[T][j]^{\#} \text{ such that }\elem \subset \mathcal{K} \bigr\} \lesssim 1.
\end{equation}
\end{assumption}

\begin{remark}
Assumption~\ref{ass1} is needed in order to obtain the trace inequalities of Lemma~\ref{lem:inversecont} and Lemma~\ref{lem:inversepoly}. Assumption~\ref{ass2} and~\ref{ass3} are required for the inverse estimates of Lemma~\ref{lem:inverse} and Theorem~\ref{thm:eig}. Assumption~\ref{ass4} guarantees the validity of the approximation result and error estimetes of Lemma~\ref{lem:interpDG} and Theorem~\ref{thm:errors}, respectively.
\end{remark}

\begin{remark}
Assumptions~\ref{ass1} allows to employ polygonal and polyhedral elements possibly characterized by face of degenerating Hausdorff measure as well as unbounded number of faces, cf. \cite{CaDoGe2016}, see also \cite{AnHoHuSaVe2017}.
\end{remark}

\subsection{DG formulation}
In order to introduce the DG discretization of \eqref{eq:weak}, we firstly need to define suitable jump and average operators across the faces $F \in \mcal[F][j]$, $j=1,\ldots,J$. 
Let $\boldsymbol{\tau}$ and $v$ be sufficiently smooth functions. For each internal face $F\in\mcal[F][j]^I$, such that $F \subset \partial \elem^{\pm}$, let $\mathbf{n}^\pm$ be the outward unit normal vector to $\partial \elem^{\pm}$, and let $\boldsymbol{\tau}^\pm$ and $v^\pm$ be the traces of the functions $\boldsymbol{\tau}$ and $v$ on $F$ from $\elem^{\pm}$, respectively. The jump and average operators across $F$ are then defined as follows:
\begin{align}
\jump{\boldsymbol{\tau}} &= \boldsymbol{\tau}^+ \cdot \mathbf{n}^+ + \boldsymbol{\tau}^- \cdot \mathbf{n}^-,\quad& \average{\boldsymbol{\tau}} &= \frac{\boldsymbol{\tau}^+  +\boldsymbol{\tau}^-}{2},\qquad & F\in \mcal[F][j]^I,\\
\jump{v} &= v^+ \mathbf{n}^+ + v^-\mathbf{n}^-,& \average{v} &= \frac{v^+  +v^-}{2},\qquad & F\in \mcal[F][j]^I,\\
\ \ \average{ \boldsymbol{\tau}} &=\boldsymbol{\tau},& \jump{v} &= v\ \mathbf{n},&F\in \mcal[F][j]^B,
\end{align}
cf. \cite{Arnetal01}. With this notation, the bilinear form $\Aa[j]: V_j\times V_j\rightarrow \mathbb{R}$ corresponding to the symmetric interior penalty DG method on the $j$-th level is defined by
\begin{equation}
\Aa[j][u][v] =  \sum_{\elem\in\mcal[T][j]}\int_\elem (\nabla u + \mathcal{R}_j(\jump{u} ) ) \cdot ( \nabla v\ + \mathcal{R}_j(\jump{v}))dx +\sum_{F\in\mcal[F][j]}\int_F\sigma_j\jump{u}\cdot\jump{v}\ ds,
\end{equation}
where $\sigma_j\in L^\infty(\mcal[F][j])$ denotes the interior penalty stabilization function, which is defined by 
\begin{equation}
\sigma_j(x)=\begin{cases}
\displaystyle C_{\sigma}^j \max_{\elem\in\{\elem^+,\elem^-\}} \Big\{ \frac{p_j^2}{h_{\elem}}\Big\},\ 
&x\in F, ~F\in\mcal[F][j]^I,\ F \subset \partial\elem^+\cap\partial\elem^-,\\
\displaystyle C_{\sigma}^j \frac{p^2}{h_{\elem}},\ &
x\in F, ~F\in\mcal[F][j]^B,\ F \subset \partial\elem^+\cap\partial\Omega,\\
\end{cases}
\end{equation}
with $C_{\sigma}^j>0$ independent of $p$, $|F|$ and $|\elem|$, and $\mathcal{R}_j:[L^1(\mcal[F][j])]^d \rightarrow [V_j]^d$ is the lifting operator on the space $V_j$, defined as
\begin{equation}
\int_{\Om} \mathcal{R}_j(\mathbf{q}) \cdot \boldsymbol{\eta} = - \int_{\mcal[F][j]} \mathbf{q} \cdot \average{\boldsymbol{\eta}}\ ds \quad \forall \ \boldsymbol{\eta} \in [V_j]^d. 
\label{eq:lifting_R}
\end{equation}
We refer to \cite{Arnetal01} for more details.
\begin{remark}\label{rmrk:Gj}
Here, the formulation with the lifting operators $\mathcal{R}_j$ allows to introduce the discrete gradient operator $\mathcal{G}_j: V_j \rightarrow [V_j]^d$, defined as 
\begin{equation}\label{eq:Gj}
\mathcal{G}_j(v) = \nabla_j v + \mathcal{R}_j(\jump{v})\quad \forall\ j=1,\dots,J,
\end{equation}
where $\nabla_j$ is the piecewise gradient operator on the space $V_j$. The role of $\mathcal{G}_j$ will be clarified in Sect.~\ref{sec:stability}.
\end{remark}

The goal of this paper is to develop non-nested V-cycle multigrid schemes to solve the following problem posed on the finest level $V_J$: find $u_J\in V_J$ such that
\begin{equation}
\label{eq:DGfem}
\Aa[J][u_J][v_J] = \int_\Om fv_J\ dx\quad\forall v_J\in V_J.
\end{equation}
By fixing a basis for $V_J$, i.e. $V_J = span\{ \phi_J^k\}_{k}$, formulation \eqref{eq:DGfem} results in the following linear system of equations 
\begin{equation}
\mathbf{A}_J \mathbf{u}_J = \mathbf{f}_J,
\end{equation} 
where $\mathbf{u}_J$ is the vector of unknowns.

\section{Preliminary results}
\label{sec:preliminary}
In this section we recall some preliminary results which form the basis of the convergence analysis presented in the next section.

\begin{lemma}
\label{lem:inversecont}
Assume that the sequence of meshes $\{\mcal[T][j]\}_{j=1}^J$, satisfies Assumption~\ref{ass1} and let $\elem \in \mcal[T][j]$, then the following bound holds
\begin{equation}
\normL[v][2][\partial \elem][2]\lesssim \frac{\epsilon}{h_{\elem}} \normL[v][2][\elem][2] + \frac{h_{\elem}}{\epsilon} | v |_{H^1(\elem)}^2  \quad \forall v \in H^1(\elem),
\end{equation}
where $h_{\elem}$ is the diameter of $\elem$ and $\epsilon>0$ is a positive number.
\end{lemma}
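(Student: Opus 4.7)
The plan is to reduce this trace inequality on the polytopic element $\kappa$ to a trace inequality on each of the inscribed simplices provided by Assumption~\ref{ass1}, and then collect the contributions.

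First, I would decompose the boundary integral face-by-face:
\begin{equation*}
\normL[v][2][\partial\elem][2] = \sum_{F\subset\partial\elem}\normL[v][2][F][2].
\end{equation*}
For each face $F\subset\partial\elem$, Assumption~\ref{ass1} produces a $d$-simplex $T_F\subset\elem$ with $\overline F\subset\partial\overline{T_F}$ and $h_\elem\lesssim d|T_F|/|F|$, equivalently $|F|/|T_F|\lesssim d/h_\elem$.

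Next, I would establish a trace inequality on the (not necessarily shape-regular) simplex $T_F$ that scales with the ratio $|F|/|T_F|$ rather than with the diameter of $T_F$. The standard way is to apply the divergence theorem to the vector field $v^2(x-x_0)$, where $x_0$ is the vertex of $T_F$ opposite $F$: this yields
\begin{equation*}
\int_F v^2\,ds \lesssim \frac{|F|}{|T_F|}\normL[v][2][T_F][2] + \normL[v][2][T_F]\,\normL[\nabla v][2][T_F],
\end{equation*}
since the normal component of $(x-x_0)$ on $F$ is $\approx d|T_F|/|F|$ and the divergence of $(x-x_0)$ is $d$. Using $|F|/|T_F|\lesssim d/h_\elem$ in the first term and Young's inequality $2ab\le \tfrac{\epsilon}{h_\elem}a^2+\tfrac{h_\elem}{\epsilon}b^2$ in the second term (with $a=\normL[v][2][T_F]$ and $b=\normL[\nabla v][2][T_F]$), I obtain
\begin{equation*}
\normL[v][2][F][2]\lesssim \frac{\epsilon}{h_\elem}\normL[v][2][T_F][2] + \frac{h_\elem}{\epsilon}|v|_{H^1(T_F)}^2.
\end{equation*}

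Finally, I would sum over the faces $F\subset\partial\elem$. Since Assumption~\ref{ass1} also guarantees that the simplices $\{T_F\}_F$ are non-overlapping and contained in $\elem$, the sums on the right-hand side are bounded by the norms on $\elem$ itself, giving the stated inequality with a constant independent of $h_\elem$, the shape of $\elem$, and the measure $|F|$.

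The step I expect to require the most care is the simplex trace inequality with the $|F|/|T_F|$ scaling, because $T_F$ need not be shape-regular: one must avoid any appeal to an inverse estimate that would bring in the diameter of $T_F$, and instead rely purely on the integration-by-parts argument above so that only the geometric ratio $|F|/|T_F|$ (controlled by Assumption~\ref{ass1}) enters the bound.
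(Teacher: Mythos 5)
Your proposal is correct and follows essentially the same route as the paper's proof in Appendix~A: the paper likewise decomposes $\partial\elem$ face by face, applies the divergence theorem on $T_F$ to the field $\sigma_F(x)=\frac{|F|}{d|T_F|}(x-\mathbf{v}_F)$ (your $v^2(x-x_0)$ argument up to normalization), invokes Assumption~\ref{ass1} to control $|F|/|T_F|$, uses Young's inequality with the $\epsilon/h_\elem$ splitting, and sums over the non-overlapping simplices. No substantive differences.
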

The proof of Lemma~\ref{lem:inversecont} is given in Appendix~\ref{appx:trace}.

\begin{lemma}
\label{lem:inversepoly}
Assume that the sequence of meshes $\{ \mcal[T][j] \}_{j=1}^J$ satisfies Assumption~\ref{ass1} and let $\elem \in \mcal[T][j]$. Then, the following bound holds
\begin{equation}
\normL[v][2][\partial \elem][2]\lesssim \frac{p_j^2}{h_{\elem}} \normL[v][2][\elem][2] \quad \forall v \in \mcal[P][p_j](\elem).
\end{equation}
\end{lemma}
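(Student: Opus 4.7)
The plan is to reduce this polytopic trace inequality to the classical polynomial trace inverse inequality on simplices by exploiting the sub-simplex structure provided by Assumption~\ref{ass1}. Crucially, Assumption~\ref{ass1} attaches to every face $F \subset \partial \kappa$ a non-overlapping $d$-simplex $T_l \subset \kappa$ with $\overline{F} = \partial \overline{\kappa} \cap \partial \overline{T_l}$, and the ratio $|F|/|T_l|$ is controlled by $d/h_\kappa$ up to a uniform constant. This lets us work face-by-face without paying any constant that depends on the (possibly unbounded) number of faces.

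First, I would fix $\kappa \in \mcal[T][j]$ and a face $F \subset \partial \kappa$, and apply the standard Warburton--Hesthaven polynomial trace inverse inequality on the simplex $T_l \subset \kappa$ associated to $F$ by Assumption~\ref{ass1}. For any polynomial $v \in \mcal[P][p_j](T_l)$, this inequality takes the form
\begin{equation*}
\normL[v][2][F][2] \lesssim \frac{(p_j+1)(p_j+d)}{d}\, \frac{|F|}{|T_l|}\, \normL[v][2][T_l][2],
\end{equation*}
where the hidden constant is absolute (independent of the shape of $T_l$, since the inequality is proved via the Koornwinder basis on the reference simplex and transported by affine change of variables, whose Jacobian cancels in the ratio $|F|/|T_l|$). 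Since $v|_\kappa \in \mcal[P][p_j](\kappa)$ restricts to a polynomial of the same degree on $T_l$, the inequality applies directly.

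Next, I would invoke the geometric bound from Assumption~\ref{ass1}, $h_\kappa \lesssim d|T_l|/|F|$, which I rewrite as $|F|/|T_l| \lesssim d/h_\kappa$, and substitute it above to obtain
\begin{equation*}
\normL[v][2][F][2] \lesssim \frac{p_j^2}{h_\kappa}\, \normL[v][2][T_l][2] \le \frac{p_j^2}{h_\kappa}\, \normL[v][2][\elem][2],
\end{equation*}
using $T_l \subset \kappa$ in the last step. Finally, summing over all faces $F \subset \partial \kappa$ and using the fact that the associated simplices $\{T_l\}_{l=1}^{n_\kappa}$ are \emph{non-overlapping} subsets of $\kappa$, the right-hand side telescopes to $p_j^2 h_\kappa^{-1} \normL[v][2][\elem][2]$ with a uniform constant, independent of $n_\kappa$. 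This delivers the claim.

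The only delicate point is making sure the constant does not absorb the number of faces of $\kappa$, which may be large (or even unbounded as in \cite{CaDoGe2016}); this is precisely what the non-overlapping sub-simplex construction in Assumption~\ref{ass1} is designed to avoid, so the argument goes through with a universal constant depending only on the reference simplex trace inequality and on the hidden constant in Assumption~\ref{ass1}.
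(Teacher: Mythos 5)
Your proof is correct, and it is essentially the paper's own argument: the paper does not prove this lemma in-house but defers to \cite{CaDoGe2016}, where the proof is exactly what you reconstruct — the Warburton--Hesthaven polynomial trace inequality applied on the simplex $T_l$ attached to each face $F$ by Assumption~\ref{ass1}, the geometric bound $|F|/|T_l| \lesssim d/h_{\kappa}$ to convert the area ratio into $h_{\kappa}^{-1}$, and a final sum over faces in which the non-overlapping of the $T_l \subset \kappa$ absorbs the (possibly unbounded) number of faces into a universal constant. No gaps to flag.
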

We refer to \cite{CaDoGe2016} for the proof.
\medskip

On each discrete space $\{V_j\}$, $j=1,\dots,J$, we consider the following DG norm:
\begin{equation}\label{eq:DGnorm}
\normDG[w][j][2]=\sum_{\elem\in\mcal[T][j]}\int_\elem |\nabla w|^2\ dx + \sum_{F\in\mcal[F][j]} \int_F \sigma_j|\jump{w}|^2\ ds.
\end{equation}
The well-posed of the DG formulation is established in the following lemma.
\begin{lemma}
\label{lem:contcoerc}
The following continuity and coercivity bounds, respectively, hold
\begin{alignat}{3}
\Aa[j][u][v]&\lesssim \normDG[u][j]\normDG[v][j]\quad &&\forall u,v\in V_j, \\
\Aa[j][u][u]&\gtrsim \normDG[u][j][2]\quad &&\forall u\in V_j,
\end{alignat}
\end{lemma}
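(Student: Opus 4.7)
The plan is to follow the standard route for analysing interior penalty DG methods formulated via the lifting operator, with the one extra ingredient being the bound on $\|\mathcal{R}_j(\jump{\cdot})\|_{L^2(\Omega)}$ by penalty-weighted jumps. Both continuity and coercivity rest on this lifting estimate together with a Cauchy–Schwarz / Young's inequality argument.

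First I would establish the auxiliary bound
\[
\sum_{\elem\in\mcal[T][j]}\normL[\mcal[R][j](\jump{w})][2][\elem][2] \lesssim \sum_{F\in\mcal[F][j]}\int_F \sigma_j |\jump{w}|^2\, ds \qquad \forall w \in V_j.
\]
To prove it, I would test the definition \eqref{eq:lifting_R} of $\mcal[R][j]$ against $\boldsymbol{\eta} = \mcal[R][j](\jump{w}) \in [V_j]^d$ and use Cauchy–Schwarz to get
$\|\mcal[R][j](\jump{w})\|_{L^2(\Omega)}^2 \le \sum_F \int_F |\jump{w}|\,|\average{\mcal[R][j](\jump{w})}|\, ds$.
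Splitting this face integral with Cauchy–Schwarz (weighting by $\sigma_j^{1/2}$ and $\sigma_j^{-1/2}$), applying the discrete trace inequality of Lemma~\ref{lem:inversepoly} face-by-face, and using $\sigma_j \approx p_j^2/h_\elem$ on each face lets the $\sigma_j^{-1}$ factor absorb the $p_j^2/h_\elem$ coming from the trace inequality. One factor of $\|\mcal[R][j](\jump{w})\|_{L^2(\Omega)}$ can then be divided out, producing the desired inequality.

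For continuity, I would apply Cauchy–Schwarz element-wise to the first sum in $\Aa[j][u][v]$ and face-wise to the penalty sum, obtaining
\[
\Aa[j][u][v] \le \|\nabla_j u + \mcal[R][j](\jump{u})\|_{L^2(\Omega)}\|\nabla_j v + \mcal[R][j](\jump{v})\|_{L^2(\Omega)} + \Big(\sum_F\!\!\int_F \!\sigma_j|\jump{u}|^2\Big)^{1/2}\!\Big(\sum_F\!\!\int_F\! \sigma_j|\jump{v}|^2\Big)^{1/2}.
\]
A triangle inequality combined with the lifting bound above shows that each of the $L^2$ factors is controlled by $\normDG[u][j]$ (respectively $\normDG[v][j]$), and the face part is trivially bounded by the DG norms, yielding the continuity estimate.

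For coercivity, I would expand the square:
\[
\Aa[j][u][u] = \|\nabla_j u\|_{L^2(\Omega)}^2 + 2\sum_{\elem\in\mcal[T][j]}\int_\elem \nabla u \cdot \mcal[R][j](\jump{u})\,dx + \|\mcal[R][j](\jump{u})\|_{L^2(\Omega)}^2 + \sum_{F\in\mcal[F][j]}\int_F \sigma_j|\jump{u}|^2\,ds.
\]
The only term requiring care is the cross term. Applying Young's inequality with a free parameter $\delta>0$ bounds it by $\delta\|\nabla_j u\|_{L^2(\Omega)}^2 + \delta^{-1}\|\mcal[R][j](\jump{u})\|_{L^2(\Omega)}^2$; dropping the (non-negative) squared lifting term and invoking the lifting estimate once more reduces the whole expression to
$(1-\delta)\|\nabla_j u\|_{L^2(\Omega)}^2 + (1 - C\delta^{-1}/C_\sigma^j)\sum_F \int_F\sigma_j|\jump{u}|^2\,ds$.
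Choosing $\delta$ small and $C_\sigma^j$ sufficiently large (so that the face coefficient stays strictly positive) gives the coercivity bound with a constant independent of $h_j$ and $p_j$.

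The main obstacle is really only the lifting bound, since everything else is a Cauchy–Schwarz/Young's inequality manipulation; the subtlety is ensuring the $p_j^2/h_\elem$ factor produced by Lemma~\ref{lem:inversepoly} is exactly matched by the scaling of $\sigma_j$, which is why the stabilization must be taken of the form $\sigma_j \approx p_j^2/h_\elem$ and why $C_\sigma^j$ must exceed a threshold depending only on the shape-regularity constants in Assumption~\ref{ass1}.
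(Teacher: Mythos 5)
The paper states this lemma without proof (it is the classical well-posedness result for the lifting-stabilized interior penalty form, cf.\ Arnold--Brezzi--Cockburn--Marini and its polytopic extension in Cangiani et al.), so there is no in-paper argument to compare against; your proposal follows the standard route, and the key auxiliary estimate $\|\mathcal{R}_j(\jump{w})\|_{L^2(\Omega)}^2\lesssim (C_\sigma^j)^{-1}\sum_{F}\int_F\sigma_j|\jump{w}|^2\,ds$ is derived correctly: testing \eqref{eq:lifting_R} with $\boldsymbol{\eta}=\mathcal{R}_j(\jump{w})$, splitting with $\sigma_j^{\pm 1/2}$, and absorbing the $p_j^2/h_\elem$ from Lemma~\ref{lem:inversepoly} into $\sigma_j^{-1}$ is exactly the right mechanism, and Assumption~\ref{ass1} is what licenses the trace inequality on polytopes. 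The continuity argument is likewise fine.

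The one genuine issue is in the coercivity step. By expanding the square, discarding the non-negative term $\|\mathcal{R}_j(\jump{u})\|_{L^2(\Omega)}^2$, and handling the cross term with Young's inequality, you end up needing $C_\sigma^j$ to exceed a threshold for the coefficient $1-C\delta^{-1}/C_\sigma^j$ to stay positive. But the lemma is stated under the paper's hypothesis that $C_\sigma^j$ is merely a positive constant, with no largeness requirement --- and for \emph{this} bilinear form (which contains the full term $\|\nabla_j u+\mathcal{R}_j(\jump{u})\|_{L^2(\Omega)}^2$, i.e.\ SIPG plus the extra $\|\mathcal{R}_j(\jump{u})\|^2$ contribution) coercivity is in fact unconditional. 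The cleaner argument runs in the opposite direction: bound the DG norm by the bilinear form via $\|\nabla_j u\|_{L^2(\Omega)}^2\le 2\|\nabla_j u+\mathcal{R}_j(\jump{u})\|_{L^2(\Omega)}^2+2\|\mathcal{R}_j(\jump{u})\|_{L^2(\Omega)}^2$, then control the last term by $(2C/C_\sigma^j)\sum_F\int_F\sigma_j|\jump{u}|^2\,ds$, so that $\normDG[u][j][2]\le 2\|\mathcal{G}_j(u)\|_{L^2(\Omega)}^2+(1+2C/C_\sigma^j)\sum_F\int_F\sigma_j|\jump{u}|^2\,ds\lesssim\Aa[j][u][u]$ for any $C_\sigma^j>0$. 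Your version proves a slightly weaker statement than the one claimed; the fix is the one-line rearrangement above, keeping rather than dropping the squared lifting term.
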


Next, we recall the following approximation result, which is an analogous bound presented in \cite[Theorem~5.2]{Canetal14}. 
\begin{lemma}
\label{lem:interpDG}
Let Assumption~\ref{ass4} be satisfied, and let $v \in L^2(\Omega)$ such that, for some $k \ge 0$, $v|_{\elem} \in H^k(\elem)$ for each $\elem \in \mcal[T][j]$. Then there exists a projection operator $\widetilde{\Pi}_j: L^2(\Omega)\rightarrow V_j$ such that
\begin{equation} 
\normH[v-\tilde{\Pi}_jv][q][\Omega] \lesssim  \frac{h_j^{s-q}}{p_j^{k-q}}\normH[v][k][\Omega], \quad for \quad 0 \le q \le k,
\end{equation}
where $s=\min\{p_j+1,k\}$ and $p_j \geq 1.$
\end{lemma}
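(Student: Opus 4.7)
The statement is essentially a polytopic $hp$-approximation estimate, extending classical simplicial $hp$-approximation to the agglomerated setting. My plan is to reduce the estimate on each polytopic element $\elem \in \mcal[T][j]$ to a classical $hp$-approximation estimate on the shape-regular simplex $\mathcal{K} \in \mcal[T][j]^{\#}$ with $\elem \subset \mathcal{K}$ provided by Assumption~\ref{ass4}, and then to sum over elements, exploiting the bounded-overlap property.

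\textbf{Step 1: element-wise extension.} Fix $\elem \in \mcal[T][j]$ and the corresponding simplex $\mathcal{K}$ from Assumption~\ref{ass4}. Since $v|_{\elem} \in H^k(\elem)$ and $\elem$ has a Lipschitz boundary inherited from the sub-triangulation of Assumption~\ref{ass3}, I would invoke a Stein-type continuous extension operator $\mathcal{E}: H^k(\elem) \to H^k(\mathbb{R}^d)$ satisfying $\mathcal{E}v|_{\elem}=v$ and $\|\mathcal{E}v\|_{H^k(\mathbb{R}^d)} \lesssim \|v\|_{H^k(\elem)}$, where the hidden constant is independent of $h_{\elem}$ thanks to scaling on the shape-regular sub-simplices. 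Restricting $\mathcal{E}v$ to $\mathcal{K}$ produces an $H^k$-function on a shape-regular simplex.

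\textbf{Step 2: classical $hp$-projection on $\mathcal{K}$.} On the shape-regular simplex $\mathcal{K}$ I invoke the standard $hp$-approximation theory on simplices (e.g.\ the Babu\v{s}ka--Suri/Schwab result for orthogonal polynomial projections) to obtain a polynomial $\Pi_{\mathcal{K}} (\mathcal{E}v) \in \mcal[P][p_j](\mathcal{K})$ satisfying
\begin{equation*}
\|\mathcal{E}v - \Pi_{\mathcal{K}}(\mathcal{E}v)\|_{H^q(\mathcal{K})} \lesssim \frac{h_{\mathcal{K}}^{s-q}}{p_j^{k-q}}\|\mathcal{E}v\|_{H^k(\mathcal{K})}, \qquad 0 \le q \le k,
\end{equation*}
with $s=\min\{p_j+1,k\}$. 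Since $\elem \subset \mathcal{K}$ and $h_{\mathcal{K}} \approx h_{\elem}$ (again from Assumption~\ref{ass4}, combined with Assumption~\ref{ass2}), restricting the polynomial to $\elem$ yields a local polynomial of degree $p_j$ whose error is bounded by $(h_j^{s-q}/p_j^{k-q}) \|v\|_{H^k(\elem)}$ after using the continuity of the extension.

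\textbf{Step 3: global assembly.} I would now define $\widetilde{\Pi}_j v \in V_j$ element-wise by $(\widetilde{\Pi}_j v)|_{\elem} := \Pi_{\mathcal{K}}(\mathcal{E}v)|_{\elem}$. Squaring the local bounds and summing over $\elem \in \mcal[T][j]$ yields
\begin{equation*}
\|v - \widetilde{\Pi}_j v\|_{H^q(\Om)}^2 \lesssim \frac{h_j^{2(s-q)}}{p_j^{2(k-q)}} \sum_{\elem \in \mcal[T][j]} \|v\|_{H^k(\mathcal{K}(\elem))}^2,
\end{equation*}
and the bounded-overlap part of Assumption~\ref{ass4} (the $\mathrm{card}\,\{\cdot\} \lesssim 1$ estimate) converts $\sum_\elem \|v\|_{H^k(\mathcal{K}(\elem))}^2$ into a fixed multiple of $\|v\|_{H^k(\Om)}^2$, giving the claimed estimate.

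\textbf{Main obstacle.} The only delicate point is Step~1: producing a Stein extension with a constant independent of the shape of $\elem$, since $\elem$ can be an arbitrary polytope possibly with very small or numerous faces. The standard way around this is to apply the extension sub-simplex by sub-simplex on the shape-regular triangulation $\{\mathfrak{s}_i\}$ from Assumption~\ref{ass3} and patch the pieces together; the shape-regularity of the $\mathfrak{s}_i$ and the bound $|\mathfrak{s}_i| \gtrsim |\elem|$ ensure the continuity constant of the extension depends only on the shape-regularity constants and not on $h_{\elem}$, on the number of faces of $\elem$, or on their Hausdorff measures. Given this extension, everything else is a scaling argument combined with the cited simplicial $hp$-approximation result of~\cite{Canetal14}.
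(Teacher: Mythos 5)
The paper does not actually prove this lemma: it is recalled from \cite{Canetal14} (Theorem~5.2 there), so the comparison is against that reference's argument. Your overall architecture --- project onto $\mathcal{P}_{p_j}(\mathcal{K})$ on the covering simplex $\mathcal{K}\supset\kappa$ of Assumption~\ref{ass4} using classical simplicial $hp$-approximation, restrict to $\kappa$, and sum using the bounded-overlap property --- is exactly the architecture of that proof, and your Steps~2 and~3 are sound (modulo reading $\|\cdot\|_{H^q(\Omega)}$ as a broken norm for $q\ge1$, since $\widetilde{\Pi}_j v$ is discontinuous).

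The genuine gap is in Step~1, and you have correctly located it but not resolved it. An element-wise Stein extension $H^k(\kappa)\to H^k(\mathbb{R}^d)$ with a constant uniform over all admissible polytopes is not available: the Stein constant depends on the Lipschitz character of $\partial\kappa$, which Assumptions~\ref{ass1}--\ref{ass3} do not control (elements may have arbitrarily many faces of arbitrarily small measure). Your proposed repair --- extending sub-simplex by sub-simplex over the shape-regular triangulation $\{\mathfrak{s}_i\}$ of Assumption~\ref{ass3} and ``patching'' --- does not work for $k\ge 2$: gluing separately extended pieces (e.g.\ by a partition of unity) produces a function whose derivatives of order $\ge 1$ jump, or are not controlled, across the internal interfaces of the patch, so the result is not an $H^k(\mathbb{R}^d)$ extension of $v|_\kappa$ with a uniformly bounded norm. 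The route taken in \cite{Canetal14} avoids the issue entirely: one applies the Stein extension \emph{once}, on the fixed Lipschitz (here convex) domain $\Omega$, to a globally regular $v$, obtaining $\mathcal{E}v\in H^k(\mathbb{R}^d)$ with $\|\mathcal{E}v\|_{H^k(\mathbb{R}^d)}\lesssim\|v\|_{H^k(\Omega)}$ and a constant depending only on $\Omega$; then $\mathcal{E}v|_{\mathcal{K}}\in H^k(\mathcal{K})$ for every covering simplex and your Steps~2--3 go through verbatim. This is consistent with the right-hand side $\|v\|_{H^k(\Omega)}$ in the statement and with how the lemma is used in the paper (it is only ever applied to globally $H^1_0$- or $H^2$-regular functions such as $\mathcal{H}_H$ and $w$); for genuinely broken-regularity data the correct statement, as in \cite{Canetal14}, carries $\|\mathcal{E}v\|_{H^k(\mathcal{K})}$ on the right rather than an elementwise $\|v\|_{H^k(\kappa)}$.
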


\noindent The result presented in Lemma~\ref{lem:interpDG} leads to the following error bounds for the underlying interior penalty DG scheme. The error in the energy norm has been proved in~\cite{Canetal14}, see also \cite{CaDoGe2016}. $L^2$-estimates can be found in \cite{AnHoHuSaVe2017}. 
\begin{theorem}
\label{thm:errors}
Assume that Assumptions~\ref{ass1} and~\ref{ass4} hold. We denote by 
$u_j\in V_j$, $j=1,\ldots,J$, the DG solution of problem \eqref{eq:DGfem} posed on level $j$, \emph{i.e.}, 
\begin{equation}
\Aa[j][u_j][v_j] = \int_\Om fv_j\ dx\quad\forall v_j\in V_j.
\end{equation}
If the solution $u$ of \eqref{eq:weak} satisfies $u|_\elem\in H^k(\elem)$, $k \ge 2$, then 
\begin{equation}
\normDG[u-u_j][j] \lesssim \frac{h_j^{(s-1)}}{p_j^{(k-\frac{3}{2})}}\normH[u][k][\Om],\quad 
\normL[u-u_j][2][\Om] \lesssim \frac{\h{j}^{s}}{p_j^{k-1}}\normH[u][k][\Om],
\end{equation}
where $s=\min\{p_j+1,k\}$ and $p_j \geq 1$.
\end{theorem}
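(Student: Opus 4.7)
My plan is to prove the two estimates in sequence, first the DG-norm estimate by a standard energy argument, then the $L^2$-estimate by a Nitsche-type duality argument, both leveraging the approximation bound of Lemma~\ref{lem:interpDG} and the well-posedness results of Lemma~\ref{lem:contcoerc}.

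For the DG-norm estimate, I would first split the error via the projection $\widetilde\Pi_j$: write $u-u_j = \eta - \xi_j$ with $\eta = u-\widetilde\Pi_j u$ and $\xi_j = u_j - \widetilde\Pi_j u \in V_j$. Since $u$ is smooth enough to satisfy $\Aa[j][u][v_j]=\int_\Om f v_j\, dx$ (the consistency of the IP DG bilinear form with lifting operators), Galerkin orthogonality gives $\Aa[j][u-u_j][v_j]=0$ for all $v_j\in V_j$. Applying coercivity (Lemma~\ref{lem:contcoerc}) to $\xi_j$ and then orthogonality,
\begin{equation*}
\normDG[\xi_j][j][2] \lesssim \Aa[j][\xi_j][\xi_j] = \Aa[j][\eta][\xi_j].
\end{equation*}
To bound $\Aa[j][\eta][\xi_j]$ I would use the continuity bound in a slightly augmented norm $|||\cdot|||_j^2 := \normDG[\cdot][j][2] + \sum_{\elem}\frac{h_\elem}{p_j^2}\|\nabla\cdot\|_{L^2(\partial\elem)}^2$, since $\eta\notin V_j$ in general. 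The extra boundary term is controlled on $\eta$ via the trace inequality of Lemma~\ref{lem:inversecont} and the approximation result of Lemma~\ref{lem:interpDG}, and the lifting term $\|\mathcal{R}_j(\jump{\eta})\|$ is bounded by $\|\sigma_j^{1/2}\jump{\eta}\|_{L^2(\mcal[F][j])}$ through the standard stability bound for $\mathcal{R}_j$ from \eqref{eq:lifting_R}. Combining all contributions, Lemma~\ref{lem:interpDG} applied with $q=0,1$ yields $|||\eta|||_j \lesssim h_j^{s-1}/p_j^{k-3/2}\,\normH[u][k][\Om]$; the triangle inequality with $\normDG[\eta][j]$ then closes the DG-norm estimate.

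For the $L^2$-estimate I would invoke the Aubin--Nitsche duality trick. Let $z\in H^2(\Om)\cap H_0^1(\Om)$ solve $-\Delta z = u-u_j$ in $\Om$ with homogeneous Dirichlet data; elliptic regularity \eqref{eq:elliptic_reg} gives $\normH[z][2][\Om]\lesssim \normL[u-u_j][2][\Om]$. Testing with the (smooth) function $z$ against the DG formulation and using Galerkin orthogonality again,
\begin{equation*}
\normL[u-u_j][2][\Om][2] = \Aa[j][u-u_j][z] = \Aa[j][u-u_j][z-\widetilde\Pi_j z].
\end{equation*}
Applying the (augmented) continuity bound and Lemma~\ref{lem:interpDG} to $z-\widetilde\Pi_j z$ with $k=2$ gives a factor $h_j/p_j^{1/2}\,\normH[z][2][\Om]$; multiplying by the DG-norm bound already proved for $u-u_j$ and absorbing $\normL[u-u_j][2][\Om]$ via elliptic regularity produces the claimed $h_j^s/p_j^{k-1}$ rate.

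The main obstacle I anticipate is the careful handling of the boundary/face contributions of the non-conforming piece $\eta$: one must bound $\Aa[j][\eta][\xi_j]$ using the correct $hp$-weighted trace and inverse inequalities so that the penalty scaling $\sigma_j\sim p_j^2/h_\elem$, the lifting term, and the gradient trace on element boundaries all balance to give exactly $p_j^{k-3/2}$ (rather than the suboptimal $p_j^{k-2}$ one would get from a naive application of Lemma~\ref{lem:inversepoly} to non-polynomial functions). This is where Lemma~\ref{lem:inversecont} for $H^1$ functions and Assumption~\ref{ass1} play the decisive role, and where one must track the $p_j$-powers most carefully; the remainder of the argument is routine once this step is controlled.
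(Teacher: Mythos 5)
The paper does not actually prove Theorem~\ref{thm:errors}: it defers the energy-norm bound to \cite{Canetal14} (see also \cite{CaDoGe2016}) and the $L^2$-bound to \cite{AnHoHuSaVe2017}. Your overall plan --- coercivity plus an augmented-norm continuity estimate for the DG-norm bound, followed by an Aubin--Nitsche duality argument using the elliptic regularity \eqref{eq:elliptic_reg} --- is exactly the route taken in those references, and you correctly identify that the $p^{1/2}$ loss originates in balancing Lemma~\ref{lem:inversecont}, the penalty scaling $\sigma_j\sim p_j^2/h_\elem$, and Lemma~\ref{lem:interpDG} on the face terms.

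The one step that would fail as written is the claim of exact consistency, $\Aa[j][u][v_j]=\int_\Om f v_j\,dx$, and hence of exact Galerkin orthogonality. For the lifted form used in this paper that identity is false in general: the defining relation \eqref{eq:lifting_R} of $\mathcal{R}_j$ holds only against test functions $\boldsymbol{\eta}\in[V_j]^d$, and $\nabla u\notin[V_j]^d$. Writing $\boldsymbol{\Pi}_j$ for the $L^2$-projection onto $[V_j]^d$ and using $\jump{u}=0$, elementwise integration by parts gives $\Aa[j][u][v_j]-\int_\Om f v_j\,dx=\sum_{F\in\mcal[F][j]}\int_F\average{\nabla u-\boldsymbol{\Pi}_j\nabla u}\cdot\jump{v_j}\,ds$, a residual that must be carried through the energy argument (it is controlled by $\|\sigma_j^{-1/2}\average{\nabla u-\boldsymbol{\Pi}_j\nabla u}\|_{L^2(\mcal[F][j])}\,\|\sigma_j^{1/2}\jump{v_j}\|_{L^2(\mcal[F][j])}$ and is of the same order as the approximation error, so the rate is unchanged). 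An analogous adjoint-inconsistency term $\sum_{F\in\mcal[F][j]}\int_F\jump{u-u_j}\cdot\average{\nabla z-\boldsymbol{\Pi}_j\nabla z}\,ds$ appears in the duality step, since $\normL[u-u_j][2][\Om][2]\ne\Aa[j][u-u_j][z]$ exactly. Both corrections are standard Strang-type terms and are estimated explicitly in \cite{Canetal14} and \cite{AnHoHuSaVe2017}; without them, neither the identity $\normDG[\xi_j][j][2]\lesssim\Aa[j][\eta][\xi_j]$ nor the starting identity of your duality argument is available.
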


\begin{remark}
We point out that the bounds in Theorem~\ref{thm:errors} are optimal in $h$ and suboptimal in $p$ of a factor $p^{\frac{1}{2}}$ and $p$ for the $DG$-norm and the $L^2$-norm, respectively. Optimal error estimates with respect to $p$ can be shown, for example, by using the projector of \cite{GeoSu03} for quadrilateral meshes providing the solution belongs to a suitable augmented Sobolev space. The issue of proving optimal estimates as the ones in \cite{GeoSu03} on polytopic meshes is an open problem and it is under investigation. In the following, we will write:
\begin{equation}
\normDG[u-u_j][j] \lesssim \frac{h_j^{(s-1)}}{p_j^{(k-1-\frac{\mu}{2})}}\normH[u][k][\Om],\quad
\normL[u-u_j][2][\Om] \lesssim \frac{\h{j}^{s}}{p_j^{k-\mu}}\normH[u][k][\Om],
\end{equation}
where $s=\min\{p_j+1,k\}$, $p_j\geq 1$, and $\mu \in \{ 0,1 \}$ for optimal and suboptimal estimates, respectively.
\end{remark}

We also need to introduce an appropriate inverse inequality, cf. \cite{Canetal16}.
\begin{lemma}
\label{lem:inverse}
Assume that Assumptions~\ref{ass2} and~\ref{ass3} hold. Then, for any $v\in V_j$, $j=1,\ldots,J$, the following inverse estimate holds
\begin{equation}
\normL[\nabla u][2][\elem][2]\lesssim  p_j^4 h_\elem^{-2}\normL[u][2][\elem][2] \qquad \forall \elem \in \mcal[T][j].
\end{equation}
\end{lemma}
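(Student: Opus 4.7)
The plan is to reduce the estimate on the polytopic element $\elem$ to the classical polynomial inverse estimate on shape-regular simplices, exploiting the sub-triangulation provided by Assumption~\ref{ass3} together with the element-size control of Assumption~\ref{ass2}.

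First I would invoke Assumption~\ref{ass3} to obtain a sub-triangulation $\{\mathfrak{s}_i\}_{i=1}^{m_\elem}$ of $\overline{\elem}$ into at most $m_\elem$ shape-regular simplices satisfying $|\mathfrak{s}_i| \gtrsim |\elem|$. Since $u|_\elem \in \mcal[P][p_j](\elem)$, the restriction $u|_{\mathfrak{s}_i}$ is again a polynomial of total degree at most $p_j$ on $\mathfrak{s}_i$. Hence I can apply the classical $hp$-polynomial inverse inequality on each shape-regular simplex (obtained via a standard scaling argument from the reference simplex, see e.g.\ Schwab's book),
\begin{equation}
\normL[\nabla u][2][\mathfrak{s}_i][2] \lesssim p_j^{4}\, h_{\mathfrak{s}_i}^{-2}\, \normL[u][2][\mathfrak{s}_i][2],
\end{equation}
with the hidden constant depending only on the shape-regularity parameter of $\mathfrak{s}_i$, which is uniformly bounded by Assumption~\ref{ass3}.

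Next I would relate $h_{\mathfrak{s}_i}$ to $h_\elem$. By shape-regularity of $\mathfrak{s}_i$, $|\mathfrak{s}_i| \approx h_{\mathfrak{s}_i}^d$. Combining with Assumption~\ref{ass3} and Assumption~\ref{ass2},
\begin{equation}
h_{\mathfrak{s}_i}^d \approx |\mathfrak{s}_i| \gtrsim |\elem| \gtrsim h_\elem^d,
\end{equation}
so $h_{\mathfrak{s}_i}^{-2} \lesssim h_\elem^{-2}$.

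Finally, summing over the sub-simplices and using $\bigcup_i \overline{\mathfrak{s}_i} = \overline{\elem}$ with pairwise disjoint interiors, together with the uniform bound on $m_\elem$,
\begin{equation}
\normL[\nabla u][2][\elem][2] = \sum_{i=1}^{m_\elem}\normL[\nabla u][2][\mathfrak{s}_i][2] \lesssim p_j^{4}\, h_\elem^{-2}\sum_{i=1}^{m_\elem}\normL[u][2][\mathfrak{s}_i][2] = p_j^{4}\, h_\elem^{-2}\,\normL[u][2][\elem][2],
\end{equation}
which is the claim. The only non-routine point is making sure that the classical $p^{4}h^{-2}$ polynomial inverse estimate on a simplex can be used with a constant depending only on shape-regularity, and that the sub-simplex scale $h_{\mathfrak{s}_i}$ is uniformly comparable to $h_\elem$; both follow directly from Assumptions~\ref{ass2}--\ref{ass3}, so no real obstacle is expected.
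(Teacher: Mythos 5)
Your argument is correct, and it is essentially the standard proof that the paper defers to by citing \cite{Canetal16}: sub-triangulate $\elem$ via Assumption~\ref{ass3}, apply the classical $p^2h^{-1}$ polynomial inverse estimate on each shape-regular simplex, and use $|\mathfrak{s}_i|\gtrsim|\elem|\gtrsim h_\elem^d$ (Assumptions~\ref{ass3} and~\ref{ass2}) to conclude $h_{\mathfrak{s}_i}\gtrsim h_\elem$ before summing. All steps check out; note only that the uniform bound on $m_\elem$ you invoke is not actually needed in the final sum (the squared $L^2$ norms over the $\mathfrak{s}_i$ add up exactly to the norm over $\elem$), and in any case it follows automatically from $|\mathfrak{s}_i|\gtrsim|\elem|$.
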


\noindent Thanks to the inverse estimate of Lemma~\ref{lem:inverse}, it is possible to obtain the following upper bound on the maximum eigenvalue of \Aa. We refer to \cite{AntHou} for a similar result on standard grids, and to \cite{AnHoHuSaVe2017} for its extension to polygonal grids.
\begin{theorem}
\label{thm:eig}
Let Assumptions~\ref{ass1},~\ref{ass2} and~\ref{ass3} be satisfied. Moreover, we assume that $h_j = \max_{\elem \in \mcal[T][j]} h_{\elem} \approx h_{\elem}\ \forall \elem \in \mcal[T][j]$, for $j=1,\dots,J$. Then 
\begin{equation}
\Aa[j][u][u]\lesssim \frac{p_j^4}{\hmin{j}^2} \normL[u][2][\Om][2]\quad \forall u\in V_j,\quad j=1,\ldots,J.
\end{equation}
\end{theorem}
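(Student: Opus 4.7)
The strategy is to reduce the problem to the DG norm and then bound the DG norm squared using the two ingredients that Lemma~\ref{lem:inversepoly} and Lemma~\ref{lem:inverse} provide. By the continuity bound of Lemma~\ref{lem:contcoerc}, taking $v = u$ gives
\begin{equation*}
\Aa[j][u][u] \lesssim \normDG[u][j][2] = \sum_{\elem\in\mcal[T][j]} \normL[\nabla u][2][\elem][2] + \sum_{F\in\mcal[F][j]} \int_F \sigma_j |\jump{u}|^2 \, ds,
\end{equation*}
so it suffices to show that each of the two contributions on the right-hand side is bounded by $p_j^4 h_j^{-2} \normL[u][2][\Omega][2]$.

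For the volume term, I would invoke directly the polynomial inverse estimate of Lemma~\ref{lem:inverse}, which gives $\normL[\nabla u][2][\elem][2] \lesssim p_j^4 h_\elem^{-2} \normL[u][2][\elem][2]$. Summing over $\elem \in \mcal[T][j]$ and using the hypothesis $h_\elem \approx h_j$ yields
\begin{equation*}
\sum_{\elem \in \mcal[T][j]} \normL[\nabla u][2][\elem][2] \lesssim \frac{p_j^4}{h_j^2} \sum_{\elem \in \mcal[T][j]} \normL[u][2][\elem][2] = \frac{p_j^4}{h_j^2} \normL[u][2][\Omega][2].
\end{equation*}
For the face term, I would use the definition of $\sigma_j$, which is bounded above by a multiple of $p_j^2/h_\elem$, together with the polynomial trace inequality of Lemma~\ref{lem:inversepoly}, to obtain for each $F \subset \partial \elem^+ \cap \partial \elem^-$ (and analogously for boundary faces)
\begin{equation*}
\int_F \sigma_j |\jump{u}|^2 \, ds \lesssim \frac{p_j^2}{h_{\elem}} \bigl( \normL[u^+][2][F][2] + \normL[u^-][2][F][2] \bigr) \lesssim \frac{p_j^4}{h_{\elem}^{2}} \bigl( \normL[u][2][\elem^+][2] + \normL[u][2][\elem^-][2] \bigr).
\end{equation*}
Summing over $F \in \mcal[F][j]$ and using again $h_\elem \approx h_j$ together with the fact that each element appears in at most a bounded number of faces (which follows from the grid assumptions) gives the desired bound.

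Combining the two contributions produces $\normDG[u][j][2] \lesssim p_j^4 h_j^{-2} \normL[u][2][\Omega][2]$, and the continuity of $\Aa[j]$ then yields the claim. I do not anticipate any serious obstacle: the only subtle point is that the inverse estimate of Lemma~\ref{lem:inverse} is stated element-wise in $h_\elem$, so the shape-regular-type hypothesis $h_j \approx h_\elem$ is precisely what allows the pointwise bound to be upgraded to a global one in $h_j$. All of the machinery involving the lifting operator $\mathcal{R}_j$ is already encapsulated in the continuity estimate of Lemma~\ref{lem:contcoerc}, so we do not need to revisit it here.
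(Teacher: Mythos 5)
Your overall strategy is the standard one and matches what the paper's cited references do: the paper itself does not reproduce a proof of Theorem~\ref{thm:eig} but defers to \cite{AntHou} and \cite{AnHoHuSaVe2017}, and the argument there is exactly your reduction via Lemma~\ref{lem:contcoerc} to the DG norm, followed by Lemma~\ref{lem:inverse} for the volume term and Lemma~\ref{lem:inversepoly} for the face term. The volume part of your argument is fine as written.

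There is, however, one genuine flaw in the face-term step: you justify the summation over $F\in\mcal[F][j]$ by asserting that ``each element appears in at most a bounded number of faces (which follows from the grid assumptions).'' That is precisely what the grid assumptions of this paper do \emph{not} guarantee --- the framework is explicitly designed (see the remark after Assumption~\ref{ass4}) to admit elements with an unbounded number of faces, possibly of degenerating measure. If you apply the trace inequality of Lemma~\ref{lem:inversepoly} face by face, bounding $\normL[u^{\pm}][2][F][2]$ by $\normL[u^{\pm}][2][\partial\elem^{\pm}][2]$ and then by $\tfrac{p_j^2}{h_{\elem^{\pm}}}\normL[u][2][\elem^{\pm}][2]$ for each face separately, the subsequent sum over the faces of a fixed element picks up a factor equal to its number of faces, which is not uniformly controlled. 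The repair is simple: first use $|\jump{u}|^2\le 2(|u^+|^2+|u^-|^2)$ and regroup the face sum by element, so that
\begin{equation}
\sum_{F\in\mcal[F][j]}\int_F\sigma_j|\jump{u}|^2\,ds \lesssim \sum_{\elem\in\mcal[T][j]}\frac{p_j^2}{h_{\elem}}\normL[u][2][\partial\elem][2],
\end{equation}
since the traces of $u$ from $\elem$ over all faces $F\subset\partial\elem$ assemble exactly into $\normL[u][2][\partial\elem][2]$; only then apply Lemma~\ref{lem:inversepoly} once per element to get $\tfrac{p_j^4}{h_{\elem}^2}\normL[u][2][\elem][2]$, and conclude with $h_{\elem}\approx h_j$. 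With this regrouping no bound on the number of faces is needed, and the rest of your proof goes through.
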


\section{The BPX-framework for the V-cycle algorithms}
\label{sec:bpxvc}
The analysis presented in this section is based on the general multigrid theoretical framework already employed and developed in \cite{BrPaXu1991} for non-nested spaces and non-inherited bilinear forms. In order to develop a geometric multigrid, the discretization at each level $V_j$ follows the one already presented in \cite{AntSarVer15}, where a W-cycle multigrid method based on nested subspaces is considered. The key ingredient in the construction of our proposed multigrid schemes is the inter-grid transfer operators.

Firstly, we introduce the operators $A_j:V_j\rightarrow V_j$, defined as
\begin{equation}
\label{eq:defAj}
(A_ju,v)=\Aa[j][u][v]\quad\forall u,v\in V_j, \quad j=1,\dots,J,
\end{equation}
and we denote as $\Lambda_j\in\mathbb{R}$ the maximum eigenvalue of $A_j$ $\forall\ j=2,\dots,J$.
Moreover, let $\textnormal{Id}_j$ be the identity operator on level $V_j$.  The smoothing scheme, which is chosen to be the Richardson iteration, is then characterized by the following operators:
\begin{equation}
B_j = \Lambda_j\textnormal{Id}_j \quad j=2,\dots,J.
\end{equation}

The prolongation operator connecting the coarser space $V_{j-1}$ to the finer space $V_j$ is denoted by $I_{j-1}^j$. Since the two spaces are non-nested, i.e. $V_{j-1} \not\subset V_j$, it cannot be chosen as the ''natural injection operator''. The most natural way to define the prolongation operator is the $L^2$-projection, i.e. $I_{j-1}^j : V_{j-1}\rightarrow V_j$
\begin{equation}\label{eq:Def_I}
(I_{j-1}^j v_H, w_h)_{L^2(\Om)} = ( v_H, w_h)_{L^2(\Om)} \ \forall w_h \in V_j,
\end{equation}

\noindent The restriction operator $I_{j}^{j-1} : V_{j}\rightarrow V_{j-1}$ is defined as the adjoint of $I_{j-1}^j$ with respect to the $L^2(\Omega)$-inner product, i.e.,
\begin{equation}
(I_j^{j-1} w_h,v_H)_{L^2(\Omega)} = (w_h,I_{j-1}^j v_H)_{L^2(\Omega)}\ \ \forall v_H \in V_{j-1}.
\end{equation}

\noindent For our analysis, we also need to introduce the operator $P_j^{j-1}:V_j\rightarrow V_{j-1}$ such that:
\begin{equation}
\Aa[j-1][P_j^{j-1} w_h][v_H] = \Aa[j][w_h][I_{j-1}^j v_H]  \quad \forall v_H \in V_{j-1}, w_h \in V_j.
\end{equation}

According with \eqref{eq:defAj}, problem~\eqref{eq:DGfem} can be written in the following equivalent form: find $u_J \in V_J$ such that
\begin{equation}
\label{eq:system}
A_J u_J = f_J,
\end{equation}
where $f_J \in V_J$ is defined as $(f_J,v)_{L^2(\Om)} = \int_\Om f v\ dx\ \forall v\in V_J$. Given an initial guess $u_0 \in V_J$, and choosing parameters $m_1,m_2 \in \mathbb{N}$, the multigrid V-cycle iteration algorithm for the approximation of $u_J$ is outlined in Algorithm~\ref{alg:VcycleMethod}. In particular, $\mathsf{MG}_\mathcal{V} (J,f_J,u_k,m_1,m_2)$ represents the approximate solution obtained after one iteration of our non-nested V-cycle scheme, which is defined by induction: if we consider the general problem of finding $z \in V_j$ such that
\begin{equation}
\label{eq:systemVj}
A_j z = g,
\end{equation}
with $j\in \{2,\dots,J\}$ and $g \in L^2(\Om)$, then $\mathsf{MG}_\mathcal{V} (j,g,z_0,m_1,m_2)$ represents the approximate solution of~\eqref{eq:systemVj} obtained after one iteration of the non-nested V-cycle scheme with initial guess $z_0 \in V_j$ and $m_1,\ m_2$ number of pre-smoothing and post-smoothing steps, respectively. The recursive procedure is outlined in Algorithm~\ref{alg:multilevel}, where we also observe that on the level $j=1$ the problem is solved by using a direct method.

\begin{algorithm}[t!]
\caption{Multigrid V-cycle iteration for the solution of problem~\eqref{eq:system}}
\label{alg:VcycleMethod}
\begin{algorithmic}
\State Initialize $u_0 \in V_J$;
\For{$k=0,1,\dots$} 
\State $u_{k+1}=\mathsf{MG}_\mathcal{V} (J,f_J,u_k,m_1,m_2);$ 
\State $u_k = u_{k+1}$;
\EndFor
\end{algorithmic}
\end{algorithm}

\begin{algorithm}[t!]
\caption{One iteration of the Multigrid V-cycle scheme on the level $j \ge 2$}
\label{alg:multilevel}
\begin{algorithmic}
\If{j=1}
\State $\mathsf{MG}_\mathcal{V} (1,g,z_0,m_1,m_2)=A_1^{-1}g.$
\Else
\State \underline{\textit{Pre-smoothing}}:
\For{$i=1,\dots,m_1$} \State $z^{(i)}=z^{(i-1)}+B_j^{-1}(g-A_jz^{(i-1)});$ \EndFor \vspace{0.3cm}
\State $\underline{\textit{Coarse grid correction}}$:
\State $r_{j-1} = I_j^{j-1}(g-A_jz^{(m_1)})$;
\State $e_{j-1} = \mathsf{MG}_\mathcal{V} (j-1,r_{j-1},0,m_1,m_2)$;
\State $z^{(m_1+1)}=z^{(m_1)}+I_{j-1}^je_{j-1}$;\vspace{0.3cm}
\State $\underline{\textit{Post-smoothing}}$:
\For{$i=m_1+2,\dots,m_1+m_2+1$} \State $z^{(i)}=z^{(i-1)}+B_j^{-1}(g-A_jz^{(i-1)});$ \EndFor \vspace{0.3cm}
\State $\mathsf{MG}_\mathcal{V} (j,g,z_0,m_1,m_2)=z^{(m_1+m_2+1)}.$
\EndIf
\end{algorithmic}
\end{algorithm}

\subsection{Convergence analysis}
\label{sec:2lvlconv}

We first define the following norms on each discrete space $V_j$
\begin{equation}
\ltrivert v\rtrivert_{s,j}=\sqrt{(A_j^sv,v)_{L^2(\Om)}}\qquad \forall\ s\in\mathbb{R},\ v\in V_j,\quad j=1, \dots,J.
\end{equation}
To analyze the convergence of the algorithm, for any $j=2,\dots,J$ we set $G_j = \textnormal{Id}_j - B_j^{-1}A_j$ and let $G_j^*$ be its adjoint respect to $\Aa[j][][]$. Following \cite{DuanGaoTanZhang}, we make three standard assumptions in order to prove the convergence of Algorithm~\ref{alg:VcycleMethod}:
\begin{enumerate}[label=\textbf{A.\arabic*}]
\addtolength{\itemindent}{0.3cm}
\item \label{ass:stability} Stability estimate: $\exists\ C_Q>0$ such that
\begin{equation}
\ltrivert (\textnormal{Id}_j - I_{j-1}^j P_j^{j-1})u_h \rtrivert_{1,j} \leq C_Q \ltrivert  u_h\rtrivert_{1,j}   \qquad \forall u_h \in V_j, \quad j=2,\dots,J;
\end{equation}

\item \label{ass:approximation} Regularity-approximation property: $\exists\ C_1>0$ such that
\begin{equation}
\bigl| \Aa[j][(\textnormal{Id}_j - I_{j-1}^j P_j^{j-1})u_h][u_h] \bigr| \leq C_1 \frac{\ltrivert u_h\rtrivert_{2,j}^2}{\Lambda_j} \qquad \forall u_h \in V_j, \quad j=2,\dots,J,
\end{equation}
where $\Lambda_j = \max \lambda_i(A_j)$ ;

\item \label{ass:smoother} Smoothing property: $\exists\ C_R>0$ such that
\begin{equation}
\frac{\normL[u_h][2][\Om]}{\Lambda_j} \leq C_R \bigl( \mathcal{R} u_h, u_h \bigr)  \qquad \forall u_h \in V_j, \quad j=2,\dots,J,
\end{equation}
where $\mathcal{R} = \bigl( \textnormal{Id}_j - G_j^* G_j \bigr) A_j^{-1}$.
\end{enumerate}

\noindent The convergence analysis of the V-cycle method is described by the following theorem that gives an estimate for the error propagation operator, which is defined as
\begin{equation}
\begin{cases} 
\mathbb{E}_{1,m_1,m_2}^{\mathsf{V}}v &= 0,\qquad \qquad \qquad \qquad \qquad \qquad \qquad \qquad \quad \quad \ \ \ \qquad \qquad j=1,\\
\mathbb{E}_{j,m_1,m_2}^{\mathsf{V}}v  &= (G_J^*)^{m_2}(\textnormal{Id}_j - I_{j-1}^j P_j^{j-1} + I_{j-1}^j \mathbb{E}_{j-1,m_1,m_2}^{\mathsf{V}} P_j^{j-1} )G_j^{m_1} v,\ j >1.
\end{cases}
\end{equation}

\begin{theorem}
\label{thm:convergence}
If Assumptions~\ref{ass:stability}, \ref{ass:approximation} and \ref{ass:smoother} hold, then
\begin{equation}
\bigl| \Aa[j][ \mathbb{E}_{j,m,m}^{\mathsf{V}}u ][u] \bigr| \leq \delta_j \Aa[j][u][u] \qquad \forall u \in V_j, \quad j=2,\dots,J
\end{equation}
where $\delta_j = \frac{C_1 C_R}{m - C_1 C_R}<1,$ provided that $m > 2C_1C_R$.
\end{theorem}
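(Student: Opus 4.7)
The proof proceeds by induction on $j$. The base case $j=1$ is immediate since $\mathbb{E}_{1,m,m}^{\mathsf{V}}=0$. Assume $|\Aa[j-1][\mathbb{E}_{j-1,m,m}^{\mathsf{V}}w][w]| \leq \delta^{\star}\Aa[j-1][w][w]$ for every $w\in V_{j-1}$, with $\delta^{\star}:=C_1C_R/(m-C_1C_R)$. For the inductive step, let $u\in V_j$ and set $v:=G_j^m u$. Because the Richardson smoother makes $G_j$ self-adjoint with respect to $\Aa[j]$ (so $(G_j^{\ast})^m$ is the $\Aa[j]$-adjoint of $G_j^m$) and using the defining relation of $P_j^{j-1}$ to send the coarse-correction term down to level $j-1$, I would rewrite
\begin{equation*}
\Aa[j][\mathbb{E}_{j,m,m}^{\mathsf{V}}u][u] = \Aa[j][(\textnormal{Id}_j-I_{j-1}^j P_j^{j-1})v][v] + \Aa[j-1][\mathbb{E}_{j-1,m,m}^{\mathsf{V}} P_j^{j-1}v][P_j^{j-1}v].
\end{equation*}

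The first summand is bounded by Assumption~\ref{ass:approximation} directly by $C_1\norms[v][2][j][2]/\Lambda_j$. For the second, the inductive hypothesis yields the factor $\delta_{j-1}\Aa[j-1][P_j^{j-1}v][P_j^{j-1}v]$, and this quantity must be transferred to level $j$. Expanding once more via the definition of $P_j^{j-1}$, $\Aa[j-1][P_j^{j-1}v][P_j^{j-1}v]=\Aa[j][v][v]-\Aa[j][v][(\textnormal{Id}_j-I_{j-1}^j P_j^{j-1})v]$, and a second application of Assumption~\ref{ass:approximation} combined with the symmetry of $\Aa[j]$ controls the defect, giving $\Aa[j-1][P_j^{j-1}v][P_j^{j-1}v]\leq \Aa[j][v][v]+C_1\norms[v][2][j][2]/\Lambda_j$. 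Assembling,
\begin{equation*}
|\Aa[j][\mathbb{E}_{j,m,m}^{\mathsf{V}}u][u]| \leq (1+\delta_{j-1})\,C_1\,\frac{\norms[v][2][j][2]}{\Lambda_j} + \delta_{j-1}\,\Aa[j][v][v].
\end{equation*}

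The crucial step is to absorb the $\norms[\cdot][2][j][2]/\Lambda_j$ factor using the pre-smoothing. Applying Assumption~\ref{ass:smoother} iteratively to $A_jG_j^k u$ and telescoping (exploiting that $G_j$ is a contraction in both $\norms[\cdot][1][j]$ and $\norms[\cdot][2][j]$) provides the sharp smoothing bound $\norms[v][2][j][2]/\Lambda_j\leq (C_R/m)\bigl(\Aa[j][u][u]-\Aa[j][v][v]\bigr)$, together with $\Aa[j][v][v]\leq \Aa[j][u][u]$. Setting $a:=\Aa[j][u][u]$ and $b:=\Aa[j][v][v]\in[0,a]$ and substituting, the right-hand side becomes the linear function $(1+\delta_{j-1})(C_1C_R/m)(a-b)+\delta_{j-1}b$ of $b$. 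Under $\delta_{j-1}\leq\delta^{\star}$, the coefficient of $b$ is nonpositive, so the maximum is attained at $b=0$, producing
\begin{equation*}
|\Aa[j][\mathbb{E}_{j,m,m}^{\mathsf{V}}u][u]| \leq (1+\delta^{\star})\,\frac{C_1C_R}{m}\,\Aa[j][u][u] = \delta^{\star}\,\Aa[j][u][u],
\end{equation*}
where the final identity is a direct algebraic check from $\delta^{\star}=C_1C_R/(m-C_1C_R)$. The constraint $m>2C_1C_R$ is exactly what forces $\delta^{\star}<1$, closing the induction.

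The main obstacle will be the non-nested energy transfer $\Aa[j-1][P_j^{j-1}v][P_j^{j-1}v]\leq \Aa[j][v][v]+O(\norms[v][2][j][2]/\Lambda_j)$. In the nested Galerkin setting this reduces to the clean bound $\Aa[j-1][P_j^{j-1}v][P_j^{j-1}v]\leq \Aa[j][v][v]$ because $I_{j-1}^jP_j^{j-1}$ is an $\Aa[j]$-orthogonal projector; here, since $V_{j-1}\not\subset V_j$, that orthogonality fails, and one must pay a defect term. The DGT framework compensates precisely for this loss via the combined use of Assumption~\ref{ass:stability} (stability of $\textnormal{Id}_j-I_{j-1}^jP_j^{j-1}$ in the $\Aa[j]$-energy) and Assumption~\ref{ass:approximation} (its approximation bound), and the smoothing absorption argument is what swallows the extra $\norms[v][2][j][2]/\Lambda_j$ into the final contraction rate.
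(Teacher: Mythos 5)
The paper never proves Theorem~\ref{thm:convergence} itself --- it defers entirely to \cite{DuanGaoTanZhang} --- so there is no in-paper argument to compare against; your reconstruction is, as far as I can check, correct, and it is essentially the induction of that reference specialized to the Richardson smoother. The exact decomposition $\Aa[j][\mathbb{E}_{j,m,m}^{\mathsf{V}}u][u]=\Aa[j][(\textnormal{Id}_j-I_{j-1}^jP_j^{j-1})v][v]+\Aa[j-1][\mathbb{E}_{j-1,m,m}^{\mathsf{V}}P_j^{j-1}v][P_j^{j-1}v]$ with $v=G_j^mu$ uses only the definition of $P_j^{j-1}$, the symmetry of the forms and the fact that $(G_j^*)^m$ is the $\Aa[j]$-adjoint of $G_j^m$; the defect identity $\Aa[j-1][P_j^{j-1}v][P_j^{j-1}v]=\Aa[j][v][v]-\Aa[j][(\textnormal{Id}_j-I_{j-1}^jP_j^{j-1})v][v]$ is the correct non-nested substitute for Galerkin orthogonality, and controlling it by a second application of Assumption~\ref{ass:approximation} is exactly the right move; the telescoped smoothing bound $\norms[v][2][j][2]/\Lambda_j\le (C_R/m)\bigl(\Aa[j][u][u]-\Aa[j][v][v]\bigr)$ is legitimate here because $B_j=\Lambda_j\textnormal{Id}_j$ makes $G_j$ a nonnegative self-adjoint contraction commuting with $A_j$, so $k\mapsto\norms[G_j^ku][2][j]$ is nonincreasing; and the closing algebra $(1+\delta^\star)C_1C_R/m=\delta^\star$ checks out, with $m>2C_1C_R$ giving $\delta^\star<1$. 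Two remarks. First, your chain of estimates never actually invokes Assumption~\ref{ass:stability}; this is consistent with the stated $\delta_j$ containing no $C_Q$ (in \cite{DuanGaoTanZhang} the stability hypothesis is needed for the variants with less-than-full regularity-approximation, not for this full-regularity contraction estimate), so its absence is not a gap --- you have in fact proved the statement under weaker hypotheses. Second, in a fully abstract setting where the smoother does not commute with $A_j$, the monotonicity used in the telescoping step would require separate justification; here it comes for free from the choice $B_j=\Lambda_j\textnormal{Id}_j$.
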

We refer to \cite{DuanGaoTanZhang} for the proof of Theorem~\ref{thm:convergence} in an abstract setting. In the following, we prove the validity of Assumptions \ref{ass:stability}, \ref{ass:approximation} and \ref{ass:smoother} for the algorithm presented in this section. We start with a two-level approach, i.e. $J=2$, so we will consider the two-level method for the solution of \eqref{eq:DGfem}, based on two spaces $V_{J-1} \not\subset V_J$. The generalization to the V-cycle method will be given at the end of this section.

\subsection{Verification of Assumption~\ref{ass:stability}}
\label{sec:stability}
In order to verify Assumption \ref{ass:stability} for the two-level method we first show a stability result of the prolongation operator $I_{J-1}^J$. In the following, we also consider the $L^2$-projection operator on the space $V_J$ defined as
\begin{equation}
Q_J : L^2(\Om) \rightarrow V_J \text{, such that } (Q_Ju,v_J)_{L^2(\Omega)} = (u,v_J)_{L^2(\Omega)} \quad \forall v_J \in V_J.
\end{equation} 
\begin{remark}\label{rmrk:eqQJandIJ}
From the definition of $I_{J-1}^J$ given in \eqref{eq:Def_I}, it holds $I_{J-1}^J = Q_J |_{V_{J-1}}$.
\end{remark}

\noindent Moreover, we need the following approximation result which shows that any $v_j \in V_j,\ j=J-1,J,$ can be approximated by an $H^1$-function, see \cite{AnHoSm16}. Let $\mathcal{G}_j$ be the discrete gradient operator~\eqref{eq:Gj} introduced in Remark~\ref{rmrk:Gj}, and consider the following problem: $\forall v_j \in V_j$, find $\mathcal{H}(v_j) \in H^1_0(\Om)$ such that
\begin{equation}
\int_{\Om} \nabla \mathcal{H}(v_j) \cdot \nabla w\ dx = \int_{\Om} \mathcal{G}_j(v_j) \cdot \nabla w\ dx \quad \forall w \in H^1_0(\Om).
\label{eq:Hdef}
\end{equation}

\noindent It is shown in \cite{AnHoSm16} that $\mathcal{H}(v_j)$ possesses good approximation properties in terms of providing an $H^1$-conforming approximant of the discontinuous function $v_j$:
\begin{theorem}
Let $\Om$ be a bounded convex polygonal/polyhedral domain in $\mathbb{R}^d$, $d=2,3$. Given $v_j \in V_{j}$, we write $\mathcal{H}(v_j) \in H^1_0(\Om)$ to be the approximation defined in \eqref{eq:Hdef}. Then, the following approximation and stability results hold:
\begin{equation}\label{eq:Hbound}
\normL[v_j - \mathcal{H}(v_j)][2][\Om] \lesssim \frac{h_j}{p_j} \normL[\sigma_j^{\frac{1}{2}} \jump{v_j}][2][\mathcal{F}_h], \qquad | \mathcal{H}(v_j) |_{H^1(\Om)} \lesssim \normDG[v_j][j].
\end{equation} 
\end{theorem}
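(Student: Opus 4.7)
I would treat the two bounds separately, since the first is a one-line energy estimate whereas the second is a Nitsche-type duality argument.

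For the $H^1$-bound, the plan is to test the defining relation \eqref{eq:Hdef} against $w = \mathcal{H}(v_j) \in H^1_0(\Om)$, apply Cauchy--Schwarz to obtain $|\mathcal{H}(v_j)|_{H^1(\Om)} \le \normL[\mathcal{G}_j(v_j)][2][\Om]$, and then invoke the standard $L^2$-stability of the lifting operator, namely $\normL[\mathcal{R}_j(\jump{v_j})][2][\Om][2] \lesssim \normL[\sigma_j^{1/2} \jump{v_j}][2][\mathcal{F}_j][2]$, which together with \eqref{eq:Gj} and the DG norm definition~\eqref{eq:DGnorm} gives $\normL[\mathcal{G}_j(v_j)][2][\Om] \lesssim \normDG[v_j][j]$. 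This is the easy half.

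For the $L^2$-bound I would use duality. Let $\psi \in H^2(\Om) \cap H^1_0(\Om)$ solve the dual Poisson problem $-\Delta \psi = v_j - \mathcal{H}(v_j)$ in $\Om$; by the elliptic regularity estimate \eqref{eq:elliptic_reg} one has $\normH[\psi][2][\Om] \lesssim \normL[v_j - \mathcal{H}(v_j)][2][\Om]$. Then I would compute $\normL[v_j - \mathcal{H}(v_j)][2][\Om][2] = \int_\Om (v_j - \mathcal{H}(v_j))(-\Delta\psi)\,dx$ in two ways: (i) integrate by parts element-by-element on the $v_j$ part, which, since $\psi \in H^2$ makes $\jump{\nabla\psi} = 0$ on interior faces and $\psi = 0$ on $\partial\Om$, produces $\int_\Om \nabla_j v_j \cdot \nabla \psi\,dx - \sum_{F \in \mcal[F][j]} \int_F \jump{v_j}\cdot\average{\nabla\psi}\,ds$; (ii) integrate by parts on the $\mathcal{H}(v_j)$ part (using $\psi \in H^1_0$) and then use \eqref{eq:Hdef} with test function $\psi$, obtaining $\int_\Om \nabla_j v_j \cdot \nabla\psi\,dx + \int_\Om \mathcal{R}_j(\jump{v_j})\cdot \nabla\psi\,dx$. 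The volume gradient contributions cancel, and to process the lifting contribution I would insert the $L^2$-projection $\Pi_j \nabla\psi \in [V_j]^d$ (which is legal because $\mathcal{R}_j(\jump{v_j}) \in [V_j]^d$) and apply definition \eqref{eq:lifting_R}. After the cancellations one is left with the clean identity
\begin{equation*}
\normL[v_j - \mathcal{H}(v_j)][2][\Om][2] = -\sum_{F \in \mcal[F][j]} \int_F \jump{v_j} \cdot \average{\nabla\psi - \Pi_j \nabla\psi}\,ds.
\end{equation*}

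To close the estimate, I would apply Cauchy--Schwarz with the weight $\sigma_j$, producing the factor $\normL[\sigma_j^{1/2}\jump{v_j}][2][\mcal[F][j]]$ from the first argument, and then bound $\normL[\sigma_j^{-1/2}\average{\nabla\psi - \Pi_j\nabla\psi}][2][\mcal[F][j]]$ element by element: use the continuous trace inequality of Lemma~\ref{lem:inversecont} applied to $\nabla\psi - \Pi_j \nabla\psi \in H^1(\elem)$, together with standard $hp$-approximation and $H^1$-stability of $\Pi_j$, to derive $\normL[\nabla\psi-\Pi_j\nabla\psi][2][\partial\elem][2] \lesssim h_\elem |\psi|_{H^2(\elem)}^2$. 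Since $\sigma_j^{-1/2} \lesssim h_\elem^{1/2}/p_j$ on each face, summing yields $\normL[\sigma_j^{-1/2}\average{\nabla\psi-\Pi_j\nabla\psi}][2][\mcal[F][j]] \lesssim (h_j/p_j)\normH[\psi][2][\Om]$. Combining this with the elliptic regularity of $\psi$ and dividing by $\normL[v_j - \mathcal{H}(v_j)][2][\Om]$ delivers the stated $L^2$-bound.

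The main obstacle is the duality step: one has to choose the right place to insert the polynomial projection $\Pi_j \nabla\psi$ so that the volume terms cancel cleanly, leaving only a face integral of $\jump{v_j}$ against $\average{\nabla\psi - \Pi_j\nabla\psi}$; getting the correct powers of $h_j/p_j$ then hinges on combining the continuous trace inequality of Lemma~\ref{lem:inversecont} with the $hp$-approximation properties of the $L^2$-projection in the right order, so that the $h_\elem^{1/2}/p_j$ from $\sigma_j^{-1/2}$ multiplies a $h_\elem^{1/2}$ trace factor to give the full $h_j/p_j$ gain.
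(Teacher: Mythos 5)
The paper itself does not prove this theorem: it is quoted verbatim from \cite{AnHoSm16} and used as a black box. Your argument is, in substance, the proof given in that reference, and it is correct in structure: the $H^1$-bound follows from testing \eqref{eq:Hdef} with $w=\mathcal{H}(v_j)$ plus the $L^2$-stability of the lifting operator (which itself is obtained from \eqref{eq:lifting_R} and Lemma~\ref{lem:inversepoly}), and the $L^2$-bound follows from the Aubin--Nitsche duality argument with the key identity $\normL[v_j-\mathcal{H}(v_j)][2][\Om][2]=-\sum_{F\in\mcal[F][j]}\int_F\jump{v_j}\cdot\average{\nabla\psi-\Pi_j\nabla\psi}\,ds$, where inserting the $L^2$-orthogonal projection $\Pi_j$ is exactly what makes the volume terms cancel. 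The one place you are too quick is the phrase ``standard $hp$-approximation and $H^1$-stability of $\Pi_j$'': on polytopic elements the paper only provides these properties for the projector $\widetilde{\Pi}_j$ of Lemma~\ref{lem:interpDG}, not for the $L^2$-orthogonal projection that the duality step forces on you. To close this you should either (a) split $\nabla\psi-\Pi_j\nabla\psi=(\nabla\psi-\widetilde{\Pi}_j\nabla\psi)+(\widetilde{\Pi}_j\nabla\psi-\Pi_j\nabla\psi)$, treat the first piece with Lemma~\ref{lem:inversecont} and Lemma~\ref{lem:interpDG}, and the second (a polynomial) with Lemma~\ref{lem:inversepoly} together with $\normL[\widetilde{\Pi}_j\nabla\psi-\Pi_j\nabla\psi][2][\elem]\le 2\normL[\nabla\psi-\widetilde{\Pi}_j\nabla\psi][2][\elem]$ by $L^2$-minimality of $\Pi_j$; or (b) keep your trace-inequality route but choose $\epsilon$ in Lemma~\ref{lem:inversecont} to absorb the extra power of $p_j$ that a crude inverse-inequality bound on $|\widetilde{\Pi}_j\nabla\psi-\Pi_j\nabla\psi|_{H^1(\elem)}$ produces. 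Either repair yields $\normL[\sigma_j^{-1/2}\average{\nabla\psi-\Pi_j\nabla\psi}][2][\mcal[F][j]]\lesssim (h_j/p_j)\normH[\psi][2][\Om]$ and hence the stated bound, so the gap is technical rather than structural.
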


We make use of the previous result in order to show the following stability result of the prolongation operator: 
\begin{lemma}\label{lemm:C_stab}
There exists a positive constant $\mathsf{C}_{\mathsf{stab}}$, independent of the mesh size such that
\begin{align}
\normDG[I_{J-1}^Jv_H][J] &\leq \mathsf{C}_{\mathsf{stab}}(p_J)\ \normDG[v_H][J-1] \quad \forall v_H \in V_{J-1},
\end{align}
here $\mathsf{C}_{\mathsf{stab}}(p_J) \approx p_J$.
\end{lemma}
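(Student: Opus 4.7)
The plan is to exploit the $H^1$-conforming approximant $\mathcal{H}(v_H) \in H^1_0(\Om)$ introduced in \eqref{eq:Hdef}--\eqref{eq:Hbound} together with the polynomial projection $\widetilde\Pi_J$ of Lemma~\ref{lem:interpDG}, in order to split $I_{J-1}^J v_H$ into a ``low-regularity'' piece, to be estimated by polynomial inverse inequalities, and an $H^1$-conforming piece, to be estimated by standard $H^1$-stability. Using Remark~\ref{rmrk:eqQJandIJ} I write
\begin{equation*}
I_{J-1}^J v_H = Q_J v_H = \chi + \widetilde\Pi_J \mathcal{H}(v_H), \qquad \chi := Q_J v_H - \widetilde\Pi_J \mathcal{H}(v_H) \in V_J,
\end{equation*}
and bound the two summands separately in $\normDG[\cdot][J]$.

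For the low-regularity piece $\chi$, note that since $\widetilde\Pi_J\mathcal{H}(v_H) \in V_J$ is invariant under $Q_J$, one has $\chi = Q_J\bigl(v_H - \widetilde\Pi_J\mathcal{H}(v_H)\bigr)$. Combining $L^2$-stability of $Q_J$, the first bound in \eqref{eq:Hbound} applied at level $J-1$, Lemma~\ref{lem:interpDG} (with $k=1$, $q=0$) applied to $\mathcal{H}(v_H)$, and the compatibility assumptions \eqref{eq:hjhj1}--\eqref{eq:pjpj1}, one obtains
\begin{equation*}
\normL[\chi][2][\Om] \le \normL[v_H - \mathcal{H}(v_H)][2][\Om] + \normL[\mathcal{H}(v_H) - \widetilde\Pi_J\mathcal{H}(v_H)][2][\Om] \lesssim \frac{h_J}{p_J}\normDG[v_H][J-1].
\end{equation*}
Since $\chi\in V_J$, Lemma~\ref{lem:inverse} bounds $\sum_{\elem}\normL[\nabla\chi][2][\elem][2]$, while Lemma~\ref{lem:inversepoly} combined with $\sigma_J\approx p_J^2/h_\elem$ bounds $\sum_{F} \int_F \sigma_J|\jump{\chi}|^2\, ds$; both contributions are controlled by $p_J^4 h_J^{-2}\normL[\chi][2][\Om][2]$, whence $\normDG[\chi][J] \lesssim p_J\normDG[v_H][J-1]$.

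For the $H^1$-conforming piece, Lemma~\ref{lem:interpDG} with $k=q=1$ together with the second bound in \eqref{eq:Hbound} yield
\begin{equation*}
\sum_{\elem\in\mcal[T][J]}\normL[\nabla\widetilde\Pi_J\mathcal{H}(v_H)][2][\elem][2] \lesssim |\mathcal{H}(v_H)|_{H^1(\Om)}^2 \lesssim \normDG[v_H][J-1][2].
\end{equation*}
The jump contribution requires more care: since $\mathcal{H}(v_H)\in H^1_0(\Om)$ has vanishing jumps on $\mcal[F][J]$, we may rewrite $\jump{\widetilde\Pi_J\mathcal{H}(v_H)} = \jump{\widetilde\Pi_J\mathcal{H}(v_H) - \mathcal{H}(v_H)}$ and then apply the continuous trace inequality of Lemma~\ref{lem:inversecont} element by element. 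Coupled with the $L^2$- and $H^1$-approximation estimates of Lemma~\ref{lem:interpDG} applied to $\mathcal{H}(v_H)$ and a judicious choice of the parameter $\epsilon$, this produces a bound of the jump term by $p_J^2|\mathcal{H}(v_H)|_{H^1(\Om)}^2 \lesssim p_J^2\normDG[v_H][J-1][2]$. A final triangle inequality then gives the claimed bound with $\mathsf{C}_{\mathsf{stab}}(p_J)\approx p_J$.

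The main obstacle I anticipate is precisely this jump contribution for $\widetilde\Pi_J\mathcal{H}(v_H)$: because $\sigma_J\approx p_J^2/h_\elem$ and the $H^1$-stability of $\widetilde\Pi_J$ carries no extra factor of $h_\elem$, one must calibrate $\epsilon$ in Lemma~\ref{lem:inversecont} so that both contributions $\tfrac{\epsilon}{h_\elem}\normL[\widetilde\Pi_J\mathcal{H}(v_H)-\mathcal{H}(v_H)][2][\elem][2]$ and $\tfrac{h_\elem}{\epsilon}|\widetilde\Pi_J\mathcal{H}(v_H)-\mathcal{H}(v_H)|_{H^1(\elem)}^2$ contribute at most $O(p_J^2)$ after multiplication by $\sigma_J$. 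This delicate balancing is exactly what dictates the final scaling $\mathsf{C}_{\mathsf{stab}}(p_J)\approx p_J$ and is the reason the estimate is suboptimal in $p_J$ by a factor $p_J^{1/2}$ relative to the conforming/nested case.
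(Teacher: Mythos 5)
Your proof is correct and follows essentially the same route as the paper: both hinge on the conforming approximant $\mathcal{H}(v_H)$ of \eqref{eq:Hdef}, the projector $\widetilde\Pi_J$, the $L^2$-bound $\|I_{J-1}^J v_H-\widetilde\Pi_J\mathcal{H}(v_H)\|_{L^2(\Omega)}\lesssim (h_J/p_J)\normDG[v_H][J-1]$, the inverse and trace inequalities of Lemmas~\ref{lem:inverse}--\ref{lem:inversepoly}, and Lemma~\ref{lem:inversecont} with $\epsilon=p_J$ for the jumps of the conforming part. The only difference is organizational (you split $I_{J-1}^Jv_H=\chi+\widetilde\Pi_J\mathcal{H}(v_H)$ up front and bound each summand in the full DG norm, whereas the paper adds and subtracts within the gradient and jump contributions separately), and note that the dominant $p_J^2$ growth of the squared norm already comes from the inverse estimates applied to $\chi$, not only from the balancing of $\epsilon$ in the conforming jump term.
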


\begin{proof}
Let $v_H \in V_{J-1}$, by the definition of the DG-norm~\eqref{eq:DGnorm}, we need to estimate:
\begin{align}
\normDG[I_{J-1}^J v_H][J]^2 & = \normL[\nabla_J (I_{J-1}^Jv_H)][2][\mathcal{T}_J]^2 + \normL[ \sigma_J^{\frac{1}{2}} |\jump{I_{J-1}^J v_H}|][2][\mathcal{F}_J]^2
\label{eq:normIvDG}.
\end{align}
We next bound each of the two terms on the right hand side. For the first one let be $\mathcal{H}_H = \mathcal{H}(v_H)$ defined as in \eqref{eq:Hdef}. Then:
\begin{align}\label{eq:sec_term}
\|  \nabla_J  (I_{J-1}^J v_H) \|_{L^2(\mathcal{T}_J)}^2 & \leq 
 \normL[\nabla_J(I_{J-1}^J v_H - \widetilde{\Pi}_{J}(\mathcal{H}_H) )][2][\mathcal{T}_J]^2  \\& +\normL[\nabla_J(\mathcal{H}_H - \widetilde{\Pi}_{J}(\mathcal{H}_H) )][2][\mathcal{T}_J]^2 + |\mathcal{H}_H|_{H^1(\Om)}^2,
\end{align}
where we have added and subtracted the terms $\nabla_J \widetilde{\Pi}_{J}(\mathcal{H}_H) )$ and $\nabla \mathcal{H}_H$. The second term of the right hand above side can be estimated using the interpolation bounds of Lemma~\ref{lem:interpDG}, the Poincar\'e inequality for $\mathcal{H}_H \in H^1_0(\Omega)$ and the second bound of \eqref{eq:Hbound}:
\begin{equation}
\normL[\nabla_J(\mathcal{H}_H - \widetilde{\Pi}_{J}(\mathcal{H}_H) )][2][\mathcal{T}_J]^2  \lesssim | \mathcal{H}_H |_{H^1(\Omega)}^2 \lesssim  \normDG[v_H][J-1]^2.
\end{equation}
In order to estimate the first term on the right hand side in \eqref{eq:sec_term} we observe that, since $I_{J-1}^J v_H - \widetilde{\Pi}_{J}(\mathcal{H}_H) \in V_J$, it is possible to make use of the inverse inequality of Lemma~\ref{lem:inverse}, that leads to the following bound:
\begin{equation}\label{eq:UseTrace}
 \| \nabla_J(  I_{J-1}^J v_H -  \widetilde{\Pi}_{J}(\mathcal{H}_H) )\|_{L^2(\mathcal{T}_J)}^2 \lesssim p_J^4 h_J^{-2} \normL[I_{J-1}^J v_H -  \widetilde{\Pi}_{J}(\mathcal{H}_H)][2][\mathcal{T}_J][2].
\end{equation}
By adding and subtracting $\mathcal{H}_H$ to $\normL[I_{J-1}^J v_H -  \widetilde{\Pi}_{J}(\mathcal{H}_H)][2][\mathcal{T}_J][2]$ we obtain
\begin{equation}\label{eq:UseAddSubHH}
\normL[I_{J-1}^J v_H -  \widetilde{\Pi}_{J}(\mathcal{H}_H)][2][\mathcal{T}_J][2] \lesssim \normL[I_{J-1}^J v_H -  \mathcal{H}_H][2][\mathcal{T}_J][2]  + \normL[\mathcal{H}_H -  \widetilde{\Pi}_{J}(\mathcal{H}_H)][2][\mathcal{T}_J][2].
\end{equation}
Using Lemma~\ref{lem:interpDG} and the Poincar\'e inequality we have
\begin{equation}
\normL[\mathcal{H}_H -  \widetilde{\Pi}_{J}(\mathcal{H}_H)][2][\mathcal{T}_J][2] \lesssim \frac{h_J^2}{p_J^2} \| \mathcal{H}_H \|_{H^1(\Omega)}^2  \lesssim \frac{h_J^2}{p_J^2} \normDG[v_H][J-1]^2,
\end{equation}
whereas the term $\normL[I_{J-1}^J v_H -  \mathcal{H}_H][2][\mathcal{T}_J][2]$ can be estimate as follow:
\begin{equation}
 \| I_{J-1}^J v_H - \mathcal{H}_H \|_{L^2(\mathcal{T}_J)}^2  \lesssim  \normL[I_{J-1}^J v_H -  Q_J(\mathcal{H}_H)][2][\mathcal{T}_J][2] +  \normL[\mathcal{H}_H -  Q_{J}(\mathcal{H}_H)][2][\mathcal{T}_J][2] 
 \end{equation}
Using Remark~\ref{rmrk:eqQJandIJ}, the continuity of $Q_J$ with respect to the $L^2$-norm, Lemma~\ref{lem:interpDG} and \eqref{eq:Hbound} we have
\begin{align}
\| I_{J-1}^J v_H - & \mathcal{H}_H \|_{L^2(\mathcal{T}_J)}^2  \lesssim \normL[Q_J(v_H - \mathcal{H}_H)][2][\mathcal{T}_J][2]  + \normL[\mathcal{H}_H -  Q_{J}(\mathcal{H}_H)][2][\mathcal{T}_J][2]  \\
& \lesssim  \normL[v_H - \mathcal{H}_H][2][\mathcal{T}_J][2] + \normL[\mathcal{H}_H - \widetilde{\Pi}_J(\mathcal{H}_H) ][2][\mathcal{T}_J][2]   \\
& \lesssim \frac{h_J^2}{p_J^2} \normL[\sigma_J^{\frac{1}{2}} |\jump{v_H}][2][\mathcal{F}_J]^2 +  \frac{h_J^2}{p_J^2} \normH[\mathcal{H}_H][1][\Om]^2  \lesssim \frac{h_J^2}{p_J^2} \normDG[v_H][J-1]^2.
\end{align}
Thanks to the previous estimates and inequalities~\eqref{eq:UseAddSubHH}, it holds
\begin{equation}\label{eq:TermL2}
 \| I_{J-1}^J v_H - \widetilde{\Pi}_{J}(\mathcal{H}_H)  \|_{L^2(\mathcal{T}_J)}^2 \lesssim \frac{h_J^2}{p_J^2} \normDG[v_H][J-1]^2,
\end{equation} 
the previous estimate, together with \eqref{eq:UseTrace}, \eqref{eq:sec_term} and the bound $|\mathcal{H}_H|_{\Om}^2 \lesssim \normDG[v_H][J-1]^2$ leads to 
\begin{equation}\label{eq:grad_estimate}
\| \nabla_J (I_{J-1}^J v_H) \|_{L^2(\mathcal{T}_J)}^2 \lesssim p_J^2\  \normDG[v_H][J-1]^2.
\end{equation}

Next we bound the second term on the right hand side in \eqref{eq:normIvDG}. By the definition of the jump term and remembering that $\jump{\mathcal{H}_H}=0\ \forall F \in \mcal[F][J]$ since $\mathcal{H}_H \in H_0^1(\Omega)$, it holds
\begin{align}\label{eq:traceIvH}
\| \sigma_J^{\frac{1}{2}} & \jump{I_{J-1}^J v_H} \|_{L^2(\mathcal{F}_J)}^2 \lesssim \\ & \sum_{\elem \in \mathcal{T}_J} \frac{p_J^2}{h_{\elem}} \Bigl( \normL[ I_{J-1}^J v_H - \widetilde{\Pi}_{J}( \mathcal{H}_H)][2][\partial \elem]^2 + \normL[ \widetilde{\Pi}_{J}( \mathcal{H}_H) - \mathcal{H}_H ][2][\partial \elem]^2  \Bigr),
\end{align}
where we also used the definition of $\sigma_J$. Now, we first observe that we could use the trace inequality of Lemma~\ref{lem:inversepoly} in order to obtain
\begin{equation}\label{eq:tracefirst}
\normL[ I_{J-1}^J v_H - \widetilde{\Pi}_{J}( \mathcal{H}_H)][2][\partial \elem]^2 \lesssim \frac{p_J^2}{h_J} \normL[ I_{J-1}^J v_H - \widetilde{\Pi}_{J}( \mathcal{H}_H)][2][\elem]^2.
\end{equation}
To bound the second term on the right hand side in \eqref{eq:traceIvH} we make use of the continuous trace inequality on polygons of Lemma~\ref{lem:inversecont} with $\epsilon = p_J$, the approximation property of Lemma~\ref{lem:interpDG} and the Poincar\'e inequality:
\begin{align}
\| \widetilde{\Pi}_{J}(  \mathcal{H}_H) - \mathcal{H}_H \|_{L^2(\partial \elem)}^2 & \lesssim \frac{p_J}{h_{J}} \normL[\widetilde{\Pi}_{J}( \mathcal{H}_H) - \mathcal{H}_H][2][\elem][2] + \frac{h_{J}}{p_J} | \widetilde{\Pi}_{J}( \mathcal{H}_H) - \mathcal{H}_H |_{H^1(\elem)}^2  \\
& \lesssim  \frac{p_J}{h_{J}} \frac{h_J^2}{p_J^2} \| \mathcal{H}_H \|_{L^2(\elem)}^2 + \frac{h_J}{p_J} \| \mathcal{H}_H \|_{H^1(\elem)}^2  \lesssim \frac{h_J}{p_J} | \mathcal{H}_H |_{H^1(\elem)}^2.
\end{align}
From the previous inequality and the bound \eqref{eq:tracefirst}, \eqref{eq:traceIvH} becomes: 
\begin{align}
\| \sigma_J^{\frac{1}{2}} \jump{I_{J-1}^J v_H} \|_{L^2(\mathcal{F}_J)}^2  & \lesssim  \frac{p_J^4}{h_J^2} \normL[ I_{J-1}^J v_H - \widetilde{\Pi}_{J}( \mathcal{H}_H)][2][\mathcal{T}_J]^2 +  p_J | \mathcal{H}_H |_{H^1(\Omega)}^2 \\& \lesssim p_J^2  \normDG[v_H][J-1]^2,
\end{align}
where we also used inequality~\eqref{eq:TermL2}. This estimate together with \eqref{eq:grad_estimate} lead to
\begin{equation}
\normDG[I_{J-1}^Jv_H][J] \leq \mathsf{C}_{\mathsf{stab}} (p_J)\ \normDG[v_H][J-1] \quad \forall v_H \in V_{J-1}.
\end{equation}
where $\mathsf{C}_{\mathsf{stab}}(p_J) \approx p_J$.
\end{proof}

We can use the previous result in order to prove that Assumption \ref{ass:stability} holds. We first observe that also the operator $P_J^{J-1}$ satisfies a similar stability estimate as the one of $I_{J-1}^J$, that is
\begin{align}
\| P_J^{J-1}& v_h \|_{DG,J-1}^2  \lesssim \Aa[J-1][P_J^{J-1} v_h][P_J^{J-1} v_h] = \Aa[J][v_h][I_{J-1}^J P_J^{J-1} v_h] \\ & \lesssim \normDG[v_h][J] \normDG[I_{J-1}^J P_J^{J-1} v_h][J] \lesssim \mathsf{C}_{\mathsf{stab}} (p_J)\ \normDG[v_h][J] \normDG[P_J^{J-1} v_h][J],
\end{align}
from which it follows 
\begin{equation}
\normDG[P_J^{J-1} v_h][J-1] \lesssim \mathsf{C}_{\mathsf{stab}} (p_J)\ \normDG[v_h][J]. 
\end{equation}

\begin{proposition}
Assumption~\ref{ass:stability} holds with $C_Q \approx p_J^2$. 
\end{proposition}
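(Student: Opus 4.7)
\medskip

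\noindent \textbf{Proof plan.} The plan is to translate Assumption~\ref{ass:stability} into an equivalent DG-norm inequality and then dominate the propagated term by a product of two stability constants. Since $\ltrivert v \rtrivert_{1,J}^2 = (A_J v,v)_{L^2(\Om)} = \Aa[J][v][v]$, the coercivity and continuity bounds of Lemma~\ref{lem:contcoerc} yield $\ltrivert v \rtrivert_{1,J} \approx \normDG[v][J]$, so it suffices to prove
\begin{equation*}
\normDG[(\textnormal{Id}_J - I_{J-1}^J P_J^{J-1}) u_h][J] \lesssim p_J^2\, \normDG[u_h][J] \qquad \forall u_h \in V_J.
\end{equation*}

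\noindent The first step is a triangle inequality, which splits the left-hand side into $\normDG[u_h][J]$ (which is already of the correct form) and $\normDG[I_{J-1}^J P_J^{J-1} u_h][J]$. For the latter I would apply Lemma~\ref{lemm:C_stab} to the element $P_J^{J-1} u_h \in V_{J-1}$, obtaining
\begin{equation*}
\normDG[I_{J-1}^J P_J^{J-1} u_h][J] \lesssim \mathsf{C}_{\mathsf{stab}}(p_J)\, \normDG[P_J^{J-1} u_h][J-1] \approx p_J\, \normDG[P_J^{J-1} u_h][J-1].
\end{equation*}
The second step is then to invoke the stability bound for $P_J^{J-1}$ derived immediately before the Proposition, namely $\normDG[P_J^{J-1} u_h][J-1] \lesssim \mathsf{C}_{\mathsf{stab}}(p_J)\, \normDG[u_h][J] \approx p_J\, \normDG[u_h][J]$. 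Combining the two bounds yields $\normDG[I_{J-1}^J P_J^{J-1} u_h][J] \lesssim p_J^2\, \normDG[u_h][J]$, and together with the triangle inequality this gives the desired estimate with $C_Q \approx 1 + p_J^2 \approx p_J^2$.

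\noindent There is no real obstacle left at this stage: the heavy lifting was done in Lemma~\ref{lemm:C_stab}, where the $p_J$-factor arises from the use of the $H^1$-conforming reconstruction $\mathcal{H}(\cdot)$, the interpolation estimate of Lemma~\ref{lem:interpDG}, and the inverse/trace inequalities of Lemmas~\ref{lem:inverse} and~\ref{lem:inversepoly}--\ref{lem:inversecont}. The only point requiring a bit of care is the equivalence between $\ltrivert \cdot \rtrivert_{1,J}$ and $\normDG[\cdot][J]$, which must be applied on \emph{both} sides of the inequality so that the $h$-independent, $p$-polynomial constant coming from the coercivity/continuity bounds is absorbed into the hidden constant of $C_Q \approx p_J^2$.
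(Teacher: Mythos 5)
Your proof is correct and takes essentially the same route as the paper: both arguments rest on Lemma~\ref{lemm:C_stab} together with the derived stability bound $\normDG[P_J^{J-1}u_h][J-1]\lesssim p_J\,\normDG[u_h][J]$, combined with the norm equivalence $\ltrivert\cdot\rtrivert_{1,J}\approx\normDG[][J]$ from Lemma~\ref{lem:contcoerc}. The only cosmetic difference is that you split off $I_{J-1}^J P_J^{J-1}u_h$ via the triangle inequality in the DG norm, whereas the paper adds and subtracts $u_h$ in both arguments of $\Aa[J][][]$ and expands the resulting quadratic form; both yield $C_Q\approx p_J^2$.
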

\begin{proof}
Let $v_H \in V_{J-1},$ making use of Lemma~\ref{lem:contcoerc} we have 
\begin{equation} 
\Aa[J][I_{J-1}^J v_H][I_{J-1}^J v_H]  \lesssim \normDG[I_{J-1}^J v_H][J]^2 \lesssim p_J^2\  \normDG[v_H][J-1]^2 \lesssim p_J^2\ \Aa[J-1][v_H][v_H],
\end{equation}
and similarly it holds 
\begin{equation}\label{eq:AP}
\Aa[J-1][P_J^{J-1} u_h][P_J^{J-1} u_h] \lesssim p_J^2\ \Aa[J][u_h][u_h]\quad \forall u_h \in V_J.
\end{equation} 
Let $u_h \in V_{J}$ and fix $v_H = P_J^{J-1} u_h $, then the following inequality holds:
\begin{equation}\label{eq:AIP}
\Aa[J][I_{J-1}^J P_J^{J-1} u_h][I_{J-1}^J P_J^{J-1} u_h] \lesssim p_J^2\ \Aa[J-1][P_J^{J-1} u_h][P_J^{J-1} u_h].
\end{equation}
By adding and subtracting $u_h$ to both arguments of $\Aa[J]$ on the left hand side of \eqref{eq:AIP}, and using \eqref{eq:AP} we obtain

\begin{equation} 
\underbrace{ \mathcal{A}_J((\textnormal{Id}_J - I_{J-1}^J P_J^{J-1}) u_h, (\textnormal{Id}_J - I_{J-1}^J P_J^{J-1}) u_h) }_{= \ltrivert (\textnormal{Id}_J - I_{J-1}^J P_J^{J-1})u_h\rtrivert_{1,J}^2} \lesssim  \underbrace{ \Bigl( p_J^2 \bigl( p_J^2 - 2 \bigr) + 1 \Bigr)}_{\le p_J^4} \Aa[J][u_h][u_h],
\end{equation}
that concludes the proof.
\end{proof}

\subsection{Verification of Assumption~\ref{ass:approximation}}

In order to show the validity of Assumption \ref{ass:approximation} we need the following standard approximation result, which is proved in Appendix~\ref{appx:app_prop}.

\begin{lemma}\label{lem:QuasiStability}
Let Assumptions~\ref{ass1} - \ref{ass4} hold. Then
\begin{equation}\label{eq:QuasiApprox}
\normL[(\textnormal{Id}_J - I_{J-1}^J P_J^{J-1})v_J][2][\Om] \lesssim \frac{h_J^2}{p_J^{2-\mu}}   \ltrivert v_J \rtrivert_{2,J} \quad \forall v_J \in V_J.
\end{equation}
\end{lemma}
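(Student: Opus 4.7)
The plan is a duality argument of Aubin--Nitsche type, that recasts $(\textnormal{Id}_J - I_{J-1}^J P_J^{J-1})v_J$ as the difference of two DG approximations, on consecutive levels, of a single auxiliary Poisson problem. First I set $g := A_J v_J \in V_J$. Since $A_J$ is $L^2$-symmetric, one has $\ltrivert v_J \rtrivert_{2,J}^{2} = (A_J v_J, A_J v_J)_{L^2(\Om)} = \normL[g][2][\Om][2]$, so the claim reduces to proving that $\normL[(\textnormal{Id}_J - I_{J-1}^J P_J^{J-1})v_J][2][\Om] \lesssim (h_J^2/p_J^{2-\mu})\normL[g][2][\Om]$.

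The key observation I plan to exploit is that $v_J$ is the DG solution on level $J$ of the auxiliary problem $-\Delta v = g$, while $P_J^{J-1} v_J$ is the DG solution on level $J-1$ of the \emph{same} problem. The former is immediate from $\Aa[J][v_J][w_J] = (A_J v_J, w_J)_{L^2(\Om)} = (g, w_J)_{L^2(\Om)}$. For the latter, the definition of $P_J^{J-1}$ yields, for every $w_{J-1} \in V_{J-1}$,
\[
\Aa[J-1][P_J^{J-1} v_J][w_{J-1}] = \Aa[J][v_J][I_{J-1}^J w_{J-1}] = (g, I_{J-1}^J w_{J-1})_{L^2(\Om)},
\]
and I invoke Remark~\ref{rmrk:eqQJandIJ} together with $g \in V_J$ to reduce the right-hand side to $(g, w_{J-1})_{L^2(\Om)}$: indeed, $w_{J-1} - Q_J w_{J-1}$ is $L^2$-orthogonal to $V_J \ni g$.

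Next, I introduce the exact solution $v \in H_0^1(\Om) \cap H^2(\Om)$ of $-\Delta v = g$, which by the elliptic regularity estimate \eqref{eq:elliptic_reg} satisfies $\normH[v][2][\Om] \lesssim \normL[g][2][\Om]$. Theorem~\ref{thm:errors} applied at both levels (with $k=2$ and $s=2$), combined with the comparability conditions \eqref{eq:hjhj1}--\eqref{eq:pjpj1}, produces
\[
\normL[v - v_J][2][\Om] + \normL[v - P_J^{J-1} v_J][2][\Om] \lesssim \frac{h_J^2}{p_J^{2-\mu}} \normL[g][2][\Om].
\]
It remains to handle the ``prolongation defect'' $\normL[(\textnormal{Id} - I_{J-1}^J) P_J^{J-1} v_J][2][\Om] = \normL[(\textnormal{Id} - Q_J) P_J^{J-1} v_J][2][\Om]$, which I will control by splitting $P_J^{J-1} v_J = v + (P_J^{J-1} v_J - v)$: the fluctuation is bounded via the $L^2$-contractivity $\|\textnormal{Id} - Q_J\|_{L^2(\Om) \to L^2(\Om)} \le 1$ (hence by the term already treated above), while the regular part is handled by the best-approximation property of $Q_J$ in $V_J$ together with the interpolation estimate of Lemma~\ref{lem:interpDG}, giving a bound of order $(h_J^2/p_J^2)\normH[v][2][\Om]$. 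A triangle inequality assembling the three contributions yields the claim.

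The step I expect to require the most care is the identification of $P_J^{J-1}v_J$ as the coarse-level DG solution associated with the datum $g$: it genuinely relies on both $I_{J-1}^J = Q_J|_{V_{J-1}}$ (Remark~\ref{rmrk:eqQJandIJ}) and $g \in V_J$, and would break for other natural choices of prolongation operator. Once this reduction is in place, the rest is a routine combination of Theorem~\ref{thm:errors} with the mesh/degree comparability \eqref{eq:hjhj1}--\eqref{eq:pjpj1} and the elliptic regularity \eqref{eq:elliptic_reg}.
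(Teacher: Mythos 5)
Your proof is correct, but it takes a genuinely more direct route than the paper. The paper never sets $g := A_J v_J$; instead it writes $\normL[(\textnormal{Id}_J - I_{J-1}^J P_J^{J-1})v_J][2][\Om]$ as a supremum over test functions $\phi \in L^2(\Om)$, introduces the dual DG solutions $z_J, z_{J-1}$ of $\Aa[j][z_j][\cdot]=(\phi,\cdot)_{L^2(\Om)}$ on the two levels, rewrites the pairing as $\Aa[J][z_J - I_{J-1}^J z_{J-1}][v_J] + \Aa[J][I_{J-1}^J(z_{J-1} - P_J^{J-1}z_J)][v_J]$, and then invokes Lemma~\ref{lem:QuasiQuasiApprox}, whose second half (the bound on $\normL[w_{J-1}-P_J^{J-1}w_J][2][\Om]$) itself requires a further duality argument. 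Your key observation --- that for $g = A_J v_J \in V_J$ one has both $w_J = v_J$ and, thanks to $I_{J-1}^J = Q_J|_{V_{J-1}}$ and $Q_J g = g$, $w_{J-1} = P_J^{J-1} v_J$, together with $\ltrivert v_J \rtrivert_{2,J} = \normL[A_J v_J][2][\Om]$ --- does not appear in the paper; it collapses both duality steps, makes the term $\normL[w_{J-1}-P_J^{J-1}w_J][2][\Om]$ identically zero, and reduces the whole lemma to a single comparison with the exact solution of $-\Delta v = g$ plus the treatment of the defect $(\textnormal{Id}_J - Q_J)P_J^{J-1}v_J$, which you handle correctly via the contractivity of $\textnormal{Id}_J - Q_J$ and Lemma~\ref{lem:interpDG}. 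What your argument buys is brevity and the elimination of one auxiliary lemma; what the paper's route buys is the intermediate estimate of Lemma~\ref{lem:QuasiQuasiApprox}, which is valid for arbitrary data $g \in L^2(\Om)$ (not only $g \in V_J$) and mirrors the structure of the abstract framework of Duan et al. You are also right to flag that your identification of $P_J^{J-1}v_J$ as the coarse DG solution is the fragile step: it genuinely uses both Remark~\ref{rmrk:eqQJandIJ} and $g \in V_J$, and would fail for a prolongation operator that is not the $L^2$-projection.
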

Thanks to Lemma~\ref{lem:QuasiStability}, it is possible to show the following theorem:
\begin{theorem}
The regularity-approximation property \ref{ass:approximation} holds with $C_1 \approx p_J^{2+\mu}$.
\end{theorem}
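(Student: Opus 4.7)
The plan is to exploit the self-adjointness of the operator $A_J$ on $V_J$ (which follows from the symmetry of $\Aa[J][][]$) together with the $L^2$-type approximation bound of Lemma~\ref{lem:QuasiStability}. Given $u_h \in V_J$, I would first rewrite the bilinear form using the defining identity $\Aa[J][v][u_h] = (A_J v, u_h)_{L^2(\Om)} = (v, A_J u_h)_{L^2(\Om)}$ with $v = (\textnormal{Id}_J - I_{J-1}^J P_J^{J-1}) u_h$, and apply Cauchy–Schwarz in $L^2(\Om)$ to obtain
\begin{equation*}
\bigl| \Aa[J][(\textnormal{Id}_J - I_{J-1}^J P_J^{J-1})u_h][u_h] \bigr|
\le \normL[(\textnormal{Id}_J - I_{J-1}^J P_J^{J-1})u_h][2][\Om]\, \normL[A_J u_h][2][\Om].
\end{equation*}

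Next I would control the two factors separately. The first is exactly what Lemma~\ref{lem:QuasiStability} estimates, giving a bound of order $h_J^2\, p_J^{\mu-2}\, \ltrivert u_h\rtrivert_{2,J}$. The second factor coincides with $\ltrivert u_h\rtrivert_{2,J}$ itself, since by the definition of the mesh-dependent norms and the self-adjointness of $A_J$
\begin{equation*}
\ltrivert u_h\rtrivert_{2,J}^2 = (A_J^2 u_h, u_h)_{L^2(\Om)} = (A_J u_h, A_J u_h)_{L^2(\Om)} = \normL[A_J u_h][2][\Om][2].
\end{equation*}
Combining the two estimates yields $\bigl| \Aa[J][(\textnormal{Id}_J - I_{J-1}^J P_J^{J-1})u_h][u_h] \bigr| \lesssim h_J^2\, p_J^{\mu-2}\, \ltrivert u_h\rtrivert_{2,J}^2$.

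Finally, I would convert the $h_J$-dependence into a $\Lambda_J$-dependence through Theorem~\ref{thm:eig}, which yields $\Lambda_J \lesssim p_J^4/h_J^2$, equivalently $h_J^2 \lesssim p_J^4/\Lambda_J$. Substituting this into the previous bound gives
\begin{equation*}
h_J^2\, p_J^{\mu-2} \lesssim \frac{p_J^4}{\Lambda_J}\, p_J^{\mu-2} = \frac{p_J^{2+\mu}}{\Lambda_J},
\end{equation*}
which produces the claimed constant $C_1 \approx p_J^{2+\mu}$. I do not anticipate any serious obstacle here: all the analytic content is already packaged in Lemma~\ref{lem:QuasiStability}, and the remaining steps amount to a self-adjointness trick, Cauchy–Schwarz and the spectral bound of Theorem~\ref{thm:eig}. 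The generalization from $J$ to a generic level $j$ is immediate, since the ingredients (symmetry of $\Aa[j]$, the $L^2$-approximation estimate, and the eigenvalue bound) all hold uniformly in $j$.
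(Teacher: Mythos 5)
Your proposal is correct and follows essentially the same route as the paper: symmetry/self-adjointness of $A_J$ plus Cauchy--Schwarz to produce the factor $\ltrivert u_h\rtrivert_{2,J}\,\normL[(\textnormal{Id}_J - I_{J-1}^J P_J^{J-1})u_h][2][\Om]$, then Lemma~\ref{lem:QuasiStability} and the eigenvalue bound $\Lambda_J \lesssim p_J^4/h_J^2$ of Theorem~\ref{thm:eig} to arrive at $C_1 \approx p_J^{2+\mu}$. The identification $\normL[A_J u_h][2][\Om] = \ltrivert u_h\rtrivert_{2,J}$ is exactly how the paper reads its own estimate, so there is nothing to add.
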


\begin{proof} 
Theorem~\ref{thm:eig} gives the following bound of the maximum eigenvalue of $A_J$: $\Lambda_J \lesssim  \frac{p_J^4}{\hmin{J}^2}.$
Using Lemma~\ref{lem:QuasiStability}, the above bound on $\Lambda_J$, and the symmetry of $\Aa[J][][]$ we have, for all $v \in V_J$:
\begin{align} 
\Aa[J][(\textnormal{Id}_J - I_{J-1}^J P_J^{J-1})v][v]  & \leq \ltrivert v \rtrivert_{2,J}  \ltrivert (\textnormal{Id}_J - I_{J-1}^J P_J^{J-1})v \rtrivert_{0,J}  \lesssim \frac{h_J^2}{p_J^{2-\mu}}  \ltrivert v \rtrivert_{2,J}^2\\& \lesssim p_J^{2+\mu} \frac{\ltrivert v \rtrivert_{2,J}^2}{\Lambda_J}.
\end{align} 
that concludes the proof.
\end{proof}

\subsection{Verification of Assumption~\ref{ass:smoother}}
\begin{proposition}
Assumption \ref{ass:smoother} holds with $C_R=1$.
\end{proposition}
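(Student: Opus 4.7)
The plan is to exploit the fact that the smoother is the Richardson iteration with $B_j=\Lambda_j \textnormal{Id}_j$, so the iteration operator $G_j=\textnormal{Id}_j-\Lambda_j^{-1}A_j$ is a first-degree polynomial in the self-adjoint (in both the $L^2$- and $\mathcal{A}_j$-inner products) operator $A_j$. This makes $G_j$ self-adjoint with respect to $\mathcal{A}_j$, so $G_j^{*}=G_j$ and consequently $G_j^{*}G_j=G_j^{2}$. The verification then reduces to an explicit algebraic manipulation of $\mathcal{R}=(\textnormal{Id}_j-G_j^{2})A_j^{-1}$, together with the definition of $\Lambda_j$ as the largest eigenvalue of $A_j$.

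First I would record the self-adjointness of $G_j$ with respect to $\mathcal{A}_j$: since $\mathcal{A}_j(A_j u,v)=(A_j^2 u,v)_{L^2}=(A_j u,A_j v)_{L^2}=\mathcal{A}_j(u,A_j v)$, we get $\mathcal{A}_j(G_j u,v)=\mathcal{A}_j(u,G_j v)$ for all $u,v\in V_j$. Hence $G_j^{*}=G_j$ and
\begin{equation}
\textnormal{Id}_j-G_j^{*}G_j=\textnormal{Id}_j-(\textnormal{Id}_j-\Lambda_j^{-1}A_j)^{2}=2\Lambda_j^{-1}A_j-\Lambda_j^{-2}A_j^{2}.
\end{equation}
Multiplying on the right by $A_j^{-1}$ gives the closed-form expression
\begin{equation}
\mathcal{R}=2\Lambda_j^{-1}\textnormal{Id}_j-\Lambda_j^{-2}A_j.
\end{equation}

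Second, I would test against $u_h\in V_j$ in the $L^2$-inner product and use the definition of $\Lambda_j$:
\begin{equation}
(\mathcal{R}u_h,u_h)_{L^2(\Om)}=2\Lambda_j^{-1}\|u_h\|_{L^2(\Om)}^{2}-\Lambda_j^{-2}\mathcal{A}_j(u_h,u_h).
\end{equation}
Since $\Lambda_j$ is the maximum eigenvalue of $A_j$, one has $\mathcal{A}_j(u_h,u_h)=(A_j u_h,u_h)_{L^2(\Om)}\le\Lambda_j\|u_h\|_{L^2(\Om)}^{2}$, so the second term is bounded below by $-\Lambda_j^{-1}\|u_h\|_{L^2(\Om)}^{2}$. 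Combining the two contributions yields $(\mathcal{R}u_h,u_h)_{L^2(\Om)}\ge \Lambda_j^{-1}\|u_h\|_{L^2(\Om)}^{2}$, which is exactly Assumption~\ref{ass:smoother} with $C_R=1$.

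There is no real obstacle here: the argument is a one-line spectral computation. The only subtle point worth flagging is the self-adjointness of $G_j$ with respect to $\mathcal{A}_j$, which is what makes $G_j^{*}G_j=G_j^{2}$ and thereby turns $\mathcal{R}$ into an explicit polynomial in $A_j$. Everything else is just the trivial bound $\mathcal{A}_j(u_h,u_h)\le\Lambda_j\|u_h\|_{L^2(\Om)}^{2}$.
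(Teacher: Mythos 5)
Your proof is correct and follows essentially the same route as the paper: both expand $\mathcal{R}=(\textnormal{Id}_j-G_j^*G_j)A_j^{-1}$ into an explicit polynomial in $A_j$ and reduce the claim to the spectral bound $\mathcal{A}_j(u_h,u_h)\le\Lambda_j\normL[u_h][2][\Om][2]$. The only cosmetic difference is that the paper verifies this last bound by a contradiction argument with an eigenfunction expansion, whereas you invoke it directly from the definition of $\Lambda_j$ as the maximum eigenvalue, which is the same fact.
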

\begin{proof}
We have:
\begin{equation}
\mathcal{R} = \bigl( \textnormal{Id}_J - G_J^* G_J \bigr) A_J^{-1} = \Bigl( \frac{2}{\Lambda_J} A_J - \frac{1}{\Lambda_J^2} A_J A_J \Bigr) A_J^{-1} = \frac{1}{\Lambda_J} \Bigl( \textnormal{Id}_J + \Bigl( \textnormal{Id}_J - \frac{1}{\Lambda_J} A_J \Bigr) \Bigr),
\end{equation}
and so
\begin{equation}
\bigl( \mathcal{R} u,u \bigr)_{L^2(\Omega)} = \frac{\normL[u_h][2][\Om]}{\Lambda_J} + \Bigl( \Bigl( \textnormal{Id}_J - \frac{1}{\Lambda_J} A_J \Bigr)u, u \Bigr)_{L^2(\Omega)}.
\end{equation}
We now prove that $\Bigl( \textnormal{Id}_J - \frac{1}{\Lambda_J} A_J \Bigr)$ is a positive definite operator. By contradiction, let us suppose that there exists a function $\overline{u} \in V_J$, $\overline{u} \ne 0$, such that $\Bigl( \Bigl( \textnormal{Id}_J - \frac{1}{\Lambda_J} A_J \Bigr)\overline{u}, \overline{u} \Bigr)_{L^2(\Omega)} < 0$, then 
\begin{equation} \label{eq:assurdo}
\Lambda_J (\overline{u}, \overline{u})_{L^2(\Omega)} < \Aa[J][\overline{u}][\overline{u}],
\end{equation}
by Lemma~\ref{lem:contcoerc} and the symmetry of the bilinear form $\Aa[J][][]$, the eigenfunctions $\{ \phi^{J}_{k} \}_{k=1}^{N_J}$ satisfy 
\begin{equation} \Aa[J][\phi^{J}_{k}][v] = \lambda^{J}_{k} (\phi^{J}_{k}, v)_{L^2(\Omega)} \qquad \forall v \in V_J, \end{equation} 
where $0 < \lambda^{J}_{1} \leq \lambda^{J}_{2} \leq \dots \leq \lambda^{J}_{N_J} = \Lambda_J$. The set of eigenfunctions is an orthonormal basis for the space $V_J$, i.e. $(\phi^{J}_{i},\phi^{J}_{j})_{L^2(\Omega)} = \delta_{ij}$, and they satisfy $\Aa[J][\phi^{J}_{i}][\phi^{J}_{j}] = \lambda^{J}_{i} \delta_{ij}$,
where $\delta_{ij}$ is the Kronecker symbol. Since $\{ \phi^{J}_{k} \}_{k=1}^{N_J}$ is a basis of the space $V_J$, we can write $\overline{u} = \sum_{k=1}^{N_J} c_k \phi^{J}_{k}$, so that \eqref{eq:assurdo} becomes
\begin{equation}
\Lambda_J \sum_{i,j=1}^{N_J}  c_j(\phi^{J}_{j}, \phi^{J}_{i})_{L^2(\Omega)} c_i < \sum_{i,j=1}^{N_J} c_j \Aa[J][\phi^{J}_{j}][\phi^{J}_{i}] c_i = \sum_{i,j=1}^{N_J} c_j \lambda^{J}_{i} (\phi^{J}_{i},\phi^{J}_{j})_{L^2(\Omega)} c_i,
\end{equation}

\begin{equation}
\Rightarrow\quad \Lambda_J \sum_{i=1}^{N_J}  c_i^2 < \sum_{i,j=1}^{N_J} c_i^2 \lambda^{J}_{i},
\end{equation}
which is a contradiction. We then deduce that $\Bigl( \textnormal{Id}_J - \frac{1}{\Lambda_J} A_J \Bigr)$ is a positive definite operator.
\end{proof}

\begin{remark}
We observe that, as we need to satisfy the condition $m > 2C_1C_R$ of Theorem~\ref{thm:convergence}, we can guarantee the convergence of the method choosing the number $m$ of smoothing steps such that $m \gtrsim p_J^{2+\mu}$, which is in agreement to what proved for W-cycle algorithms in \cite{AntSarVer15} and \cite{AnHoHuSaVe2017} on nested grids.
\end{remark}

\begin{remark}
The analysis of this section can be generalized to the full V-cycle algorithm with $J>2$ as follows: Assumption~\ref{ass:smoother} is verified with $C_R=1$ also on the arbitrary levels $j,j-1$, because each level $j$ satisfies Assumption~\ref{ass:smoother} with constant $C_R^j=1$. Assumptions~\ref{ass:approximation} and \ref{ass:stability} are satisfied with $C_1 = \max_{j}\{ C_1^j \}$ and $C_Q = \max_{j}\{ C_Q^j \}$, respectively, where $C_1^j$ and $C_Q^j$ are the same as the ones defined in the previous analysis but on the level $j$.
\end{remark}

\section{Numerical results}
\label{sec:numerical}

\begin{figure}[t]
\centering
\begin{tabular}{lcccc}
 & Set 1 & Set 2 & Set 3 & Set 4\\
Level 4 & \subfloat{
	  \raisebox{-.45\height}{\includegraphics[width=0.17\textwidth]{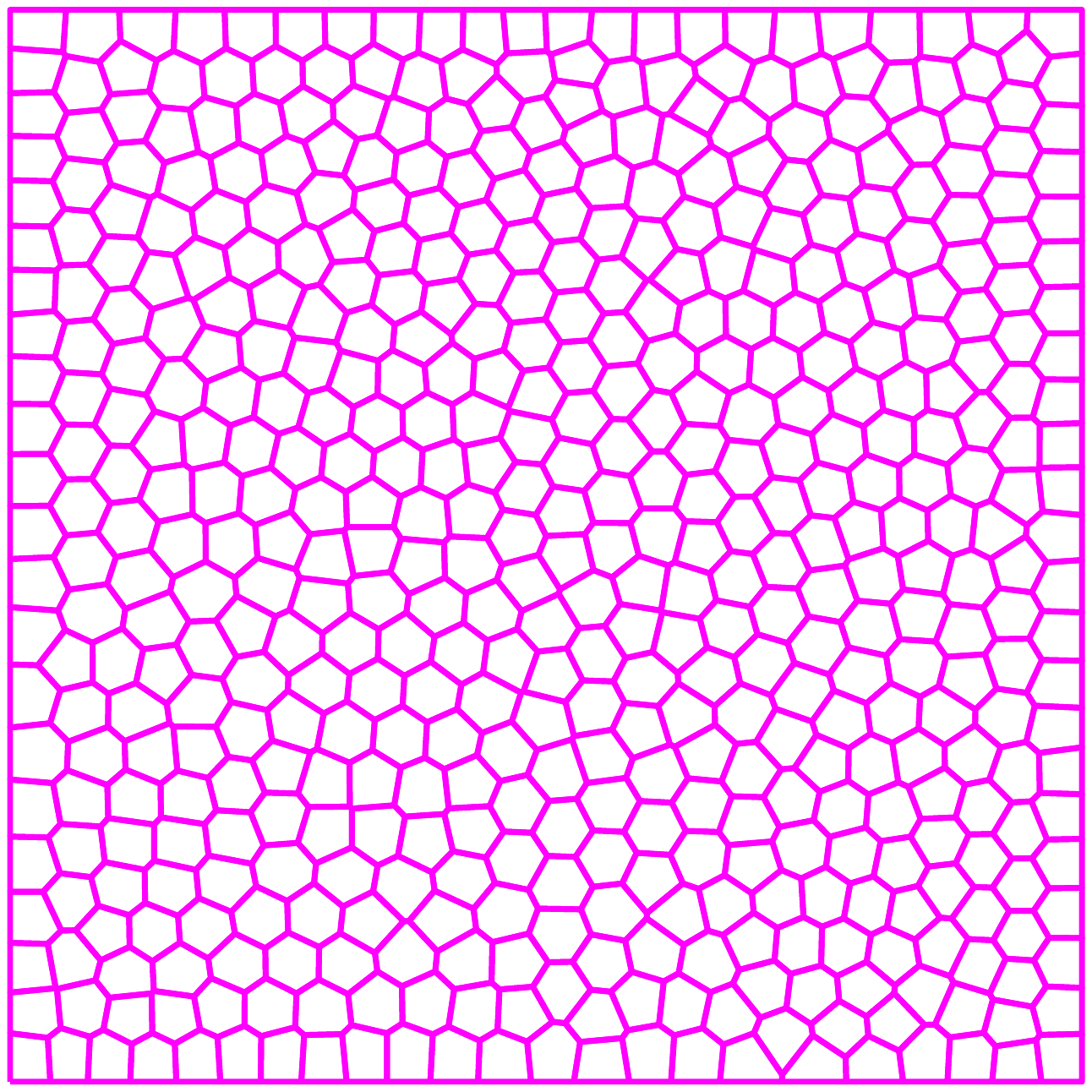}}}  & \subfloat{
          \raisebox{-.45\height}{\includegraphics[width=0.17\textwidth]{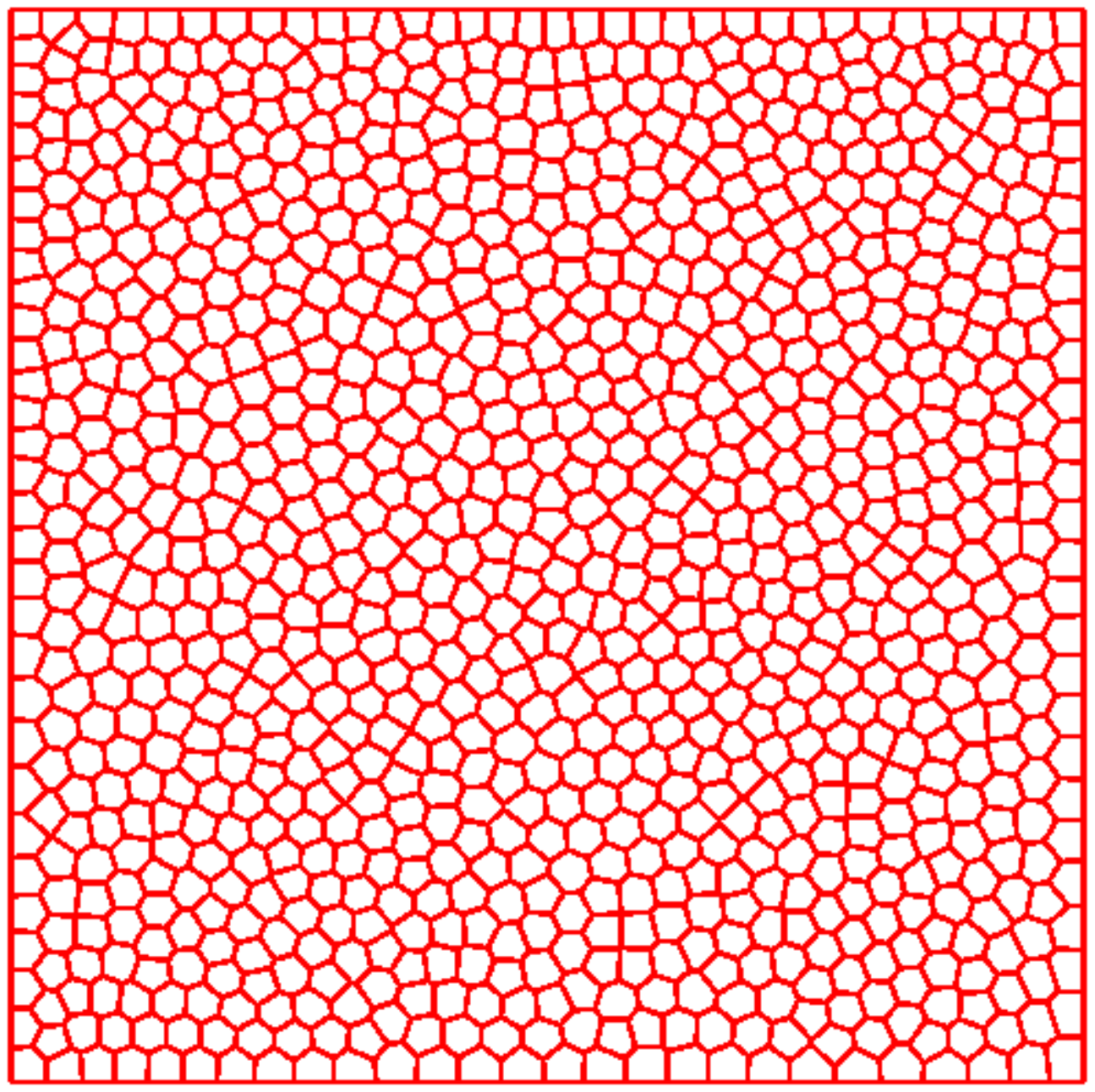}}} & \subfloat{
          \raisebox{-.45\height}{\includegraphics[width=0.17\textwidth]{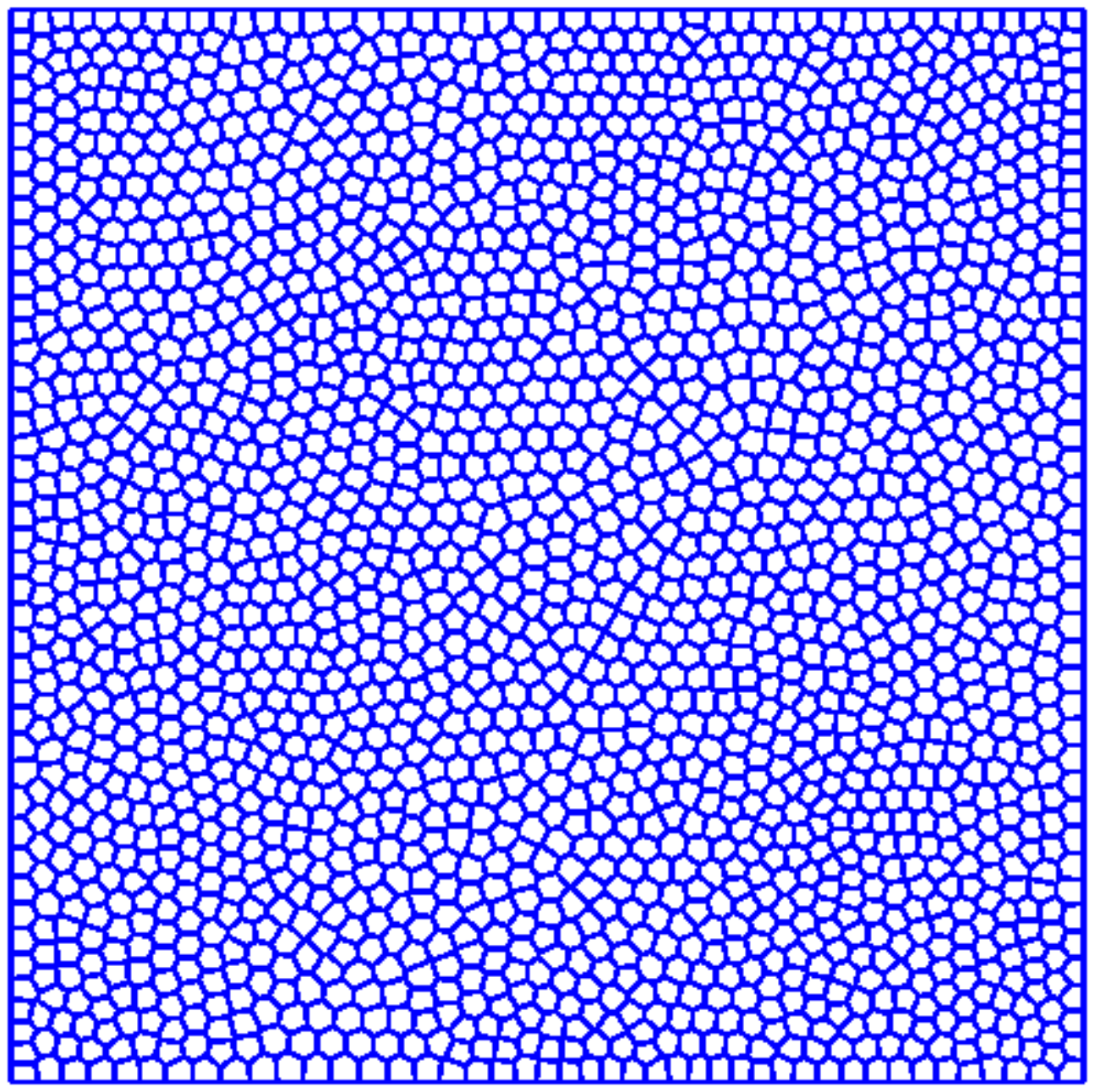}}} & \subfloat{
          \raisebox{-.45\height}{\includegraphics[width=0.17\textwidth]{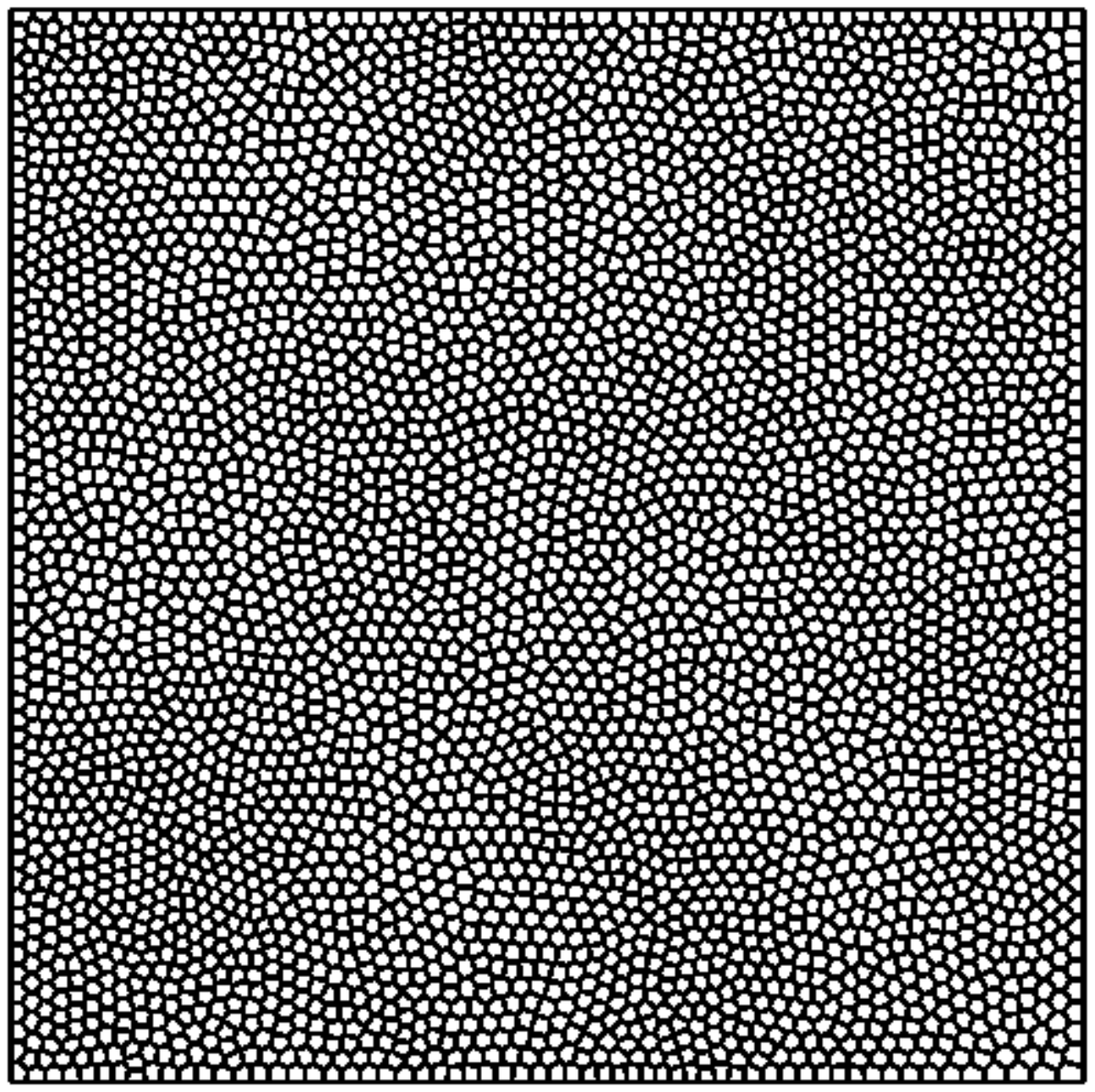}}}\\
Level 3 & \subfloat{
          \raisebox{-.45\height}{\includegraphics[width=0.17\textwidth]{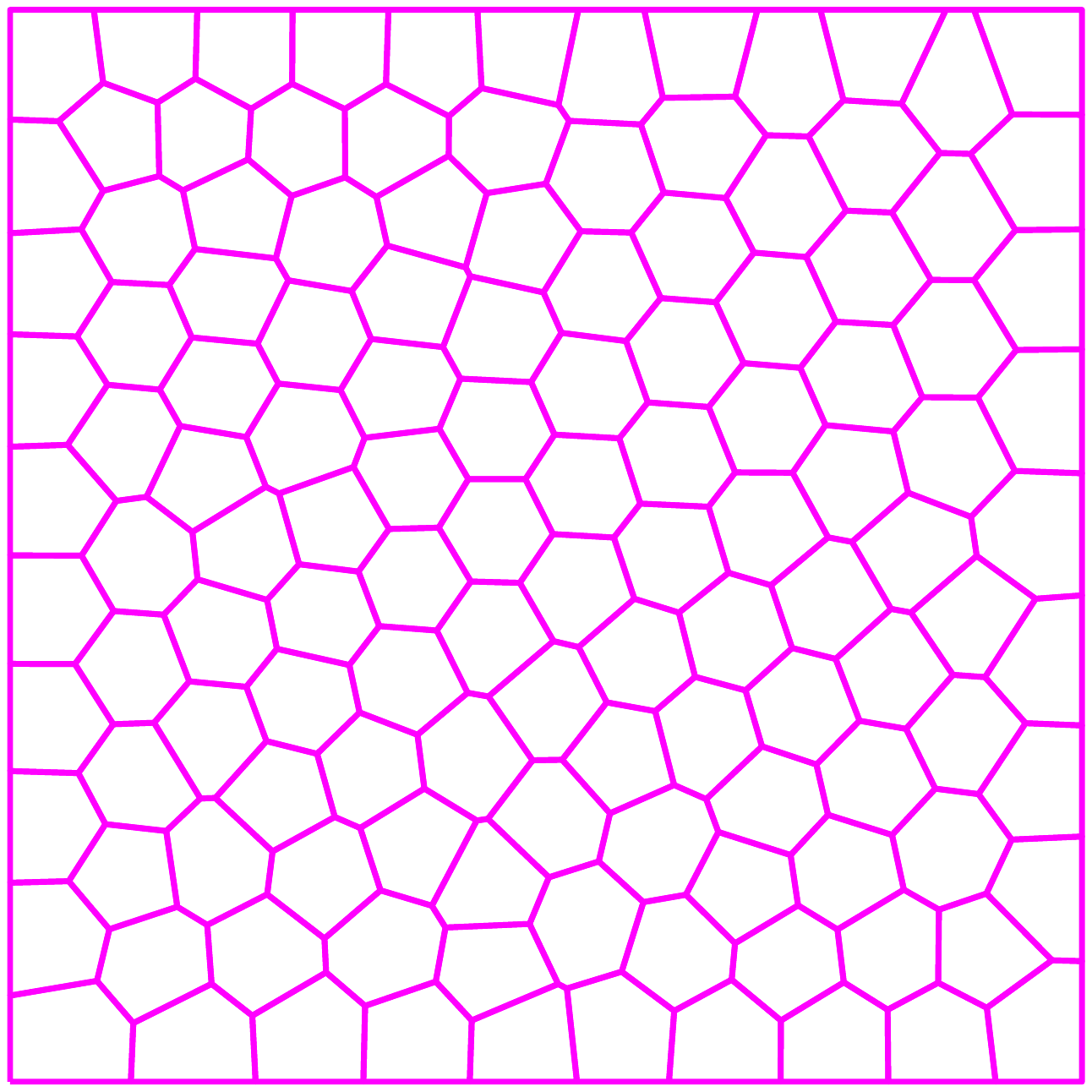}}}  & \subfloat{
          \raisebox{-.45\height}{\includegraphics[width=0.17\textwidth]{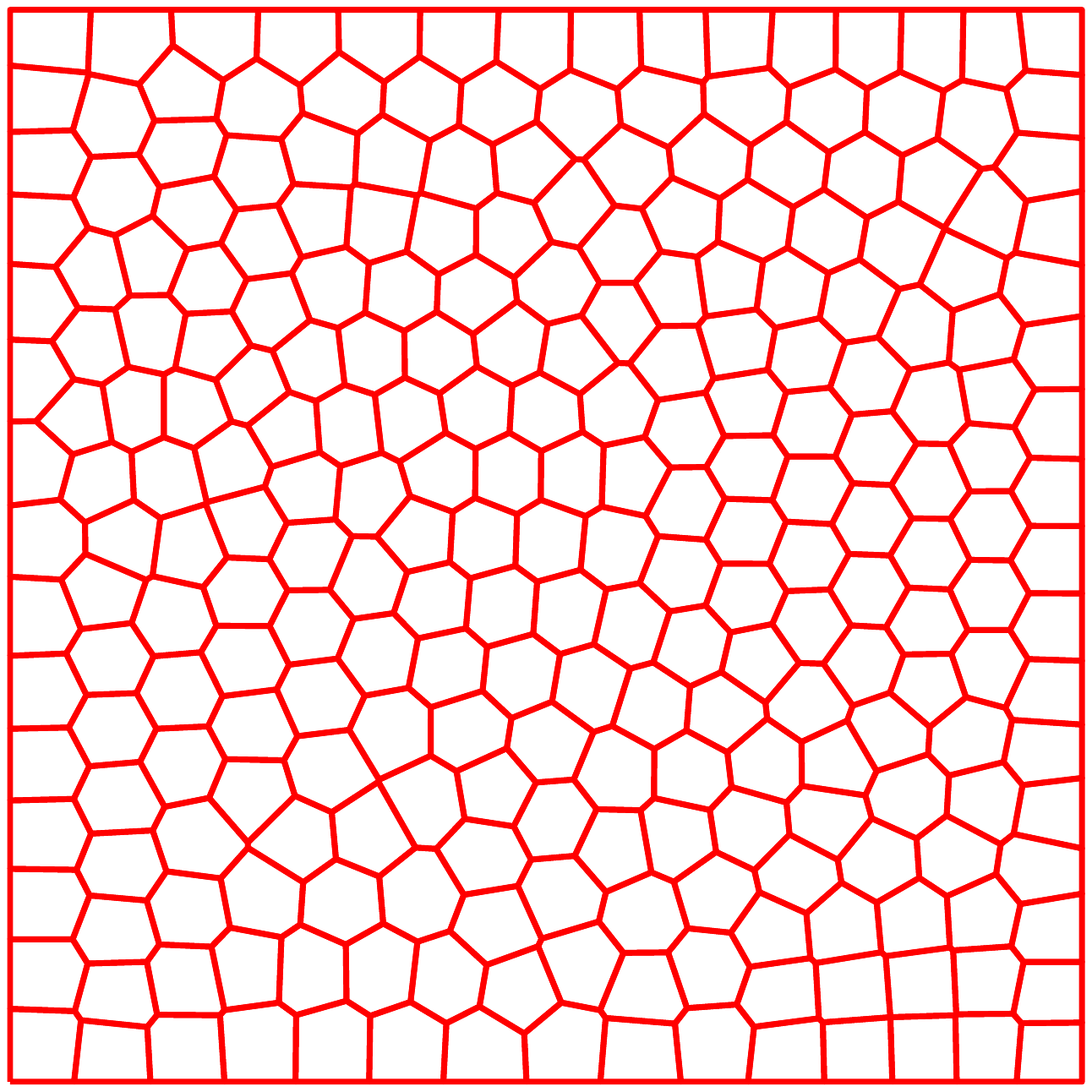}}} & \subfloat{
          \raisebox{-.45\height}{\includegraphics[width=0.17\textwidth]{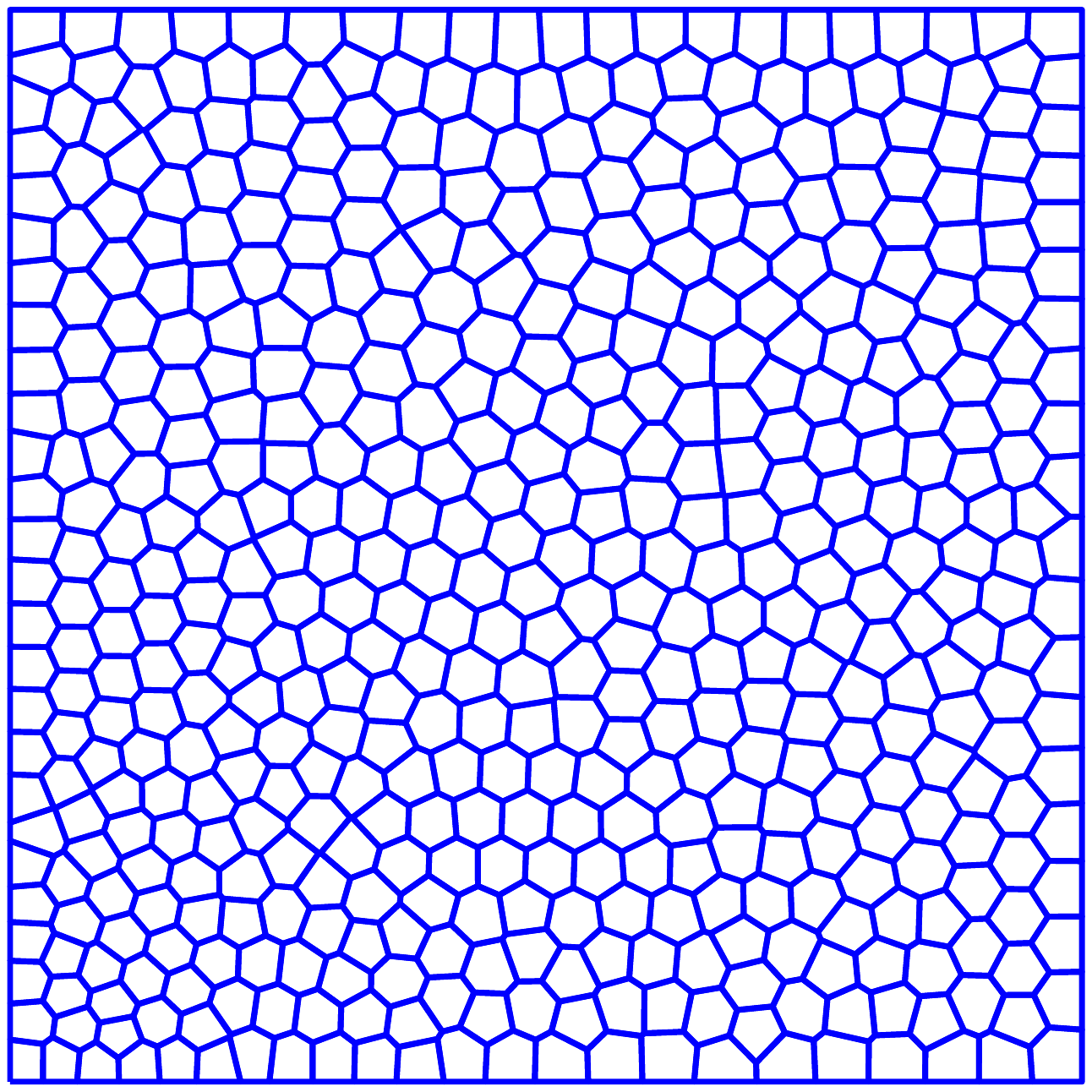}}} & \subfloat{
          \raisebox{-.45\height}{\includegraphics[width=0.17\textwidth]{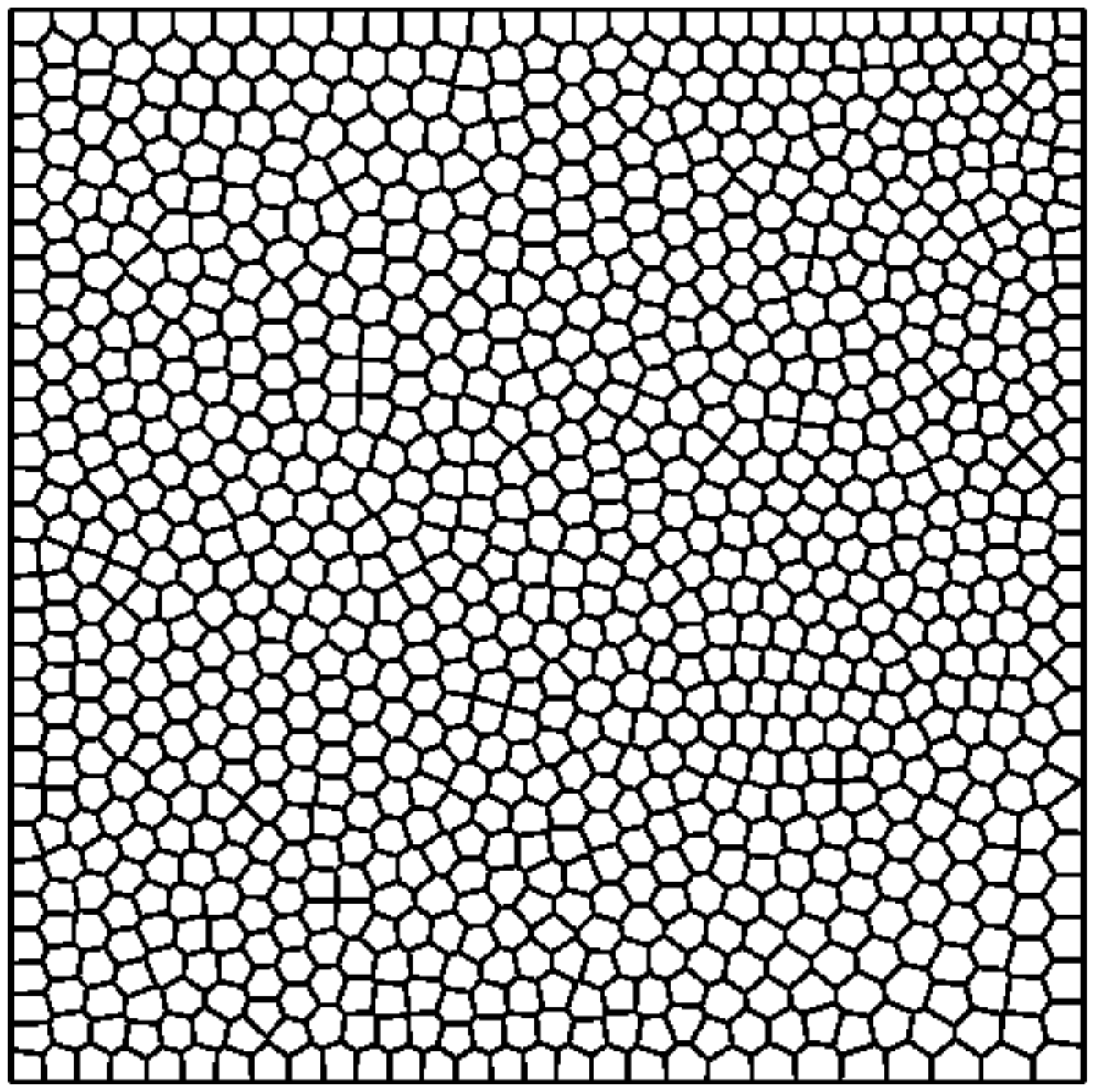}}}\\
Level 2 & \subfloat{
          \raisebox{-.45\height}{\includegraphics[width=0.17\textwidth]{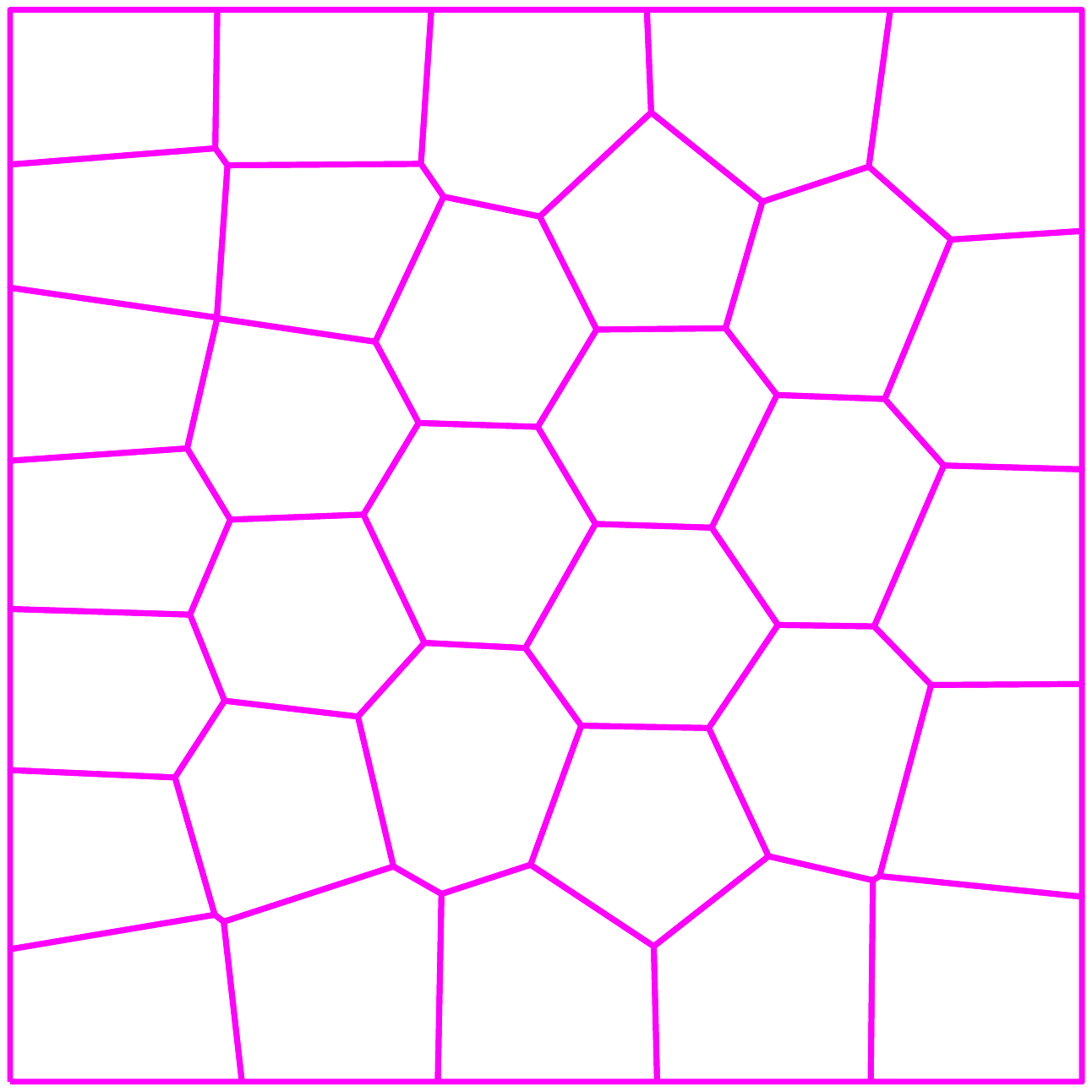}}}  & \subfloat{
          \raisebox{-.45\height}{\includegraphics[width=0.17\textwidth]{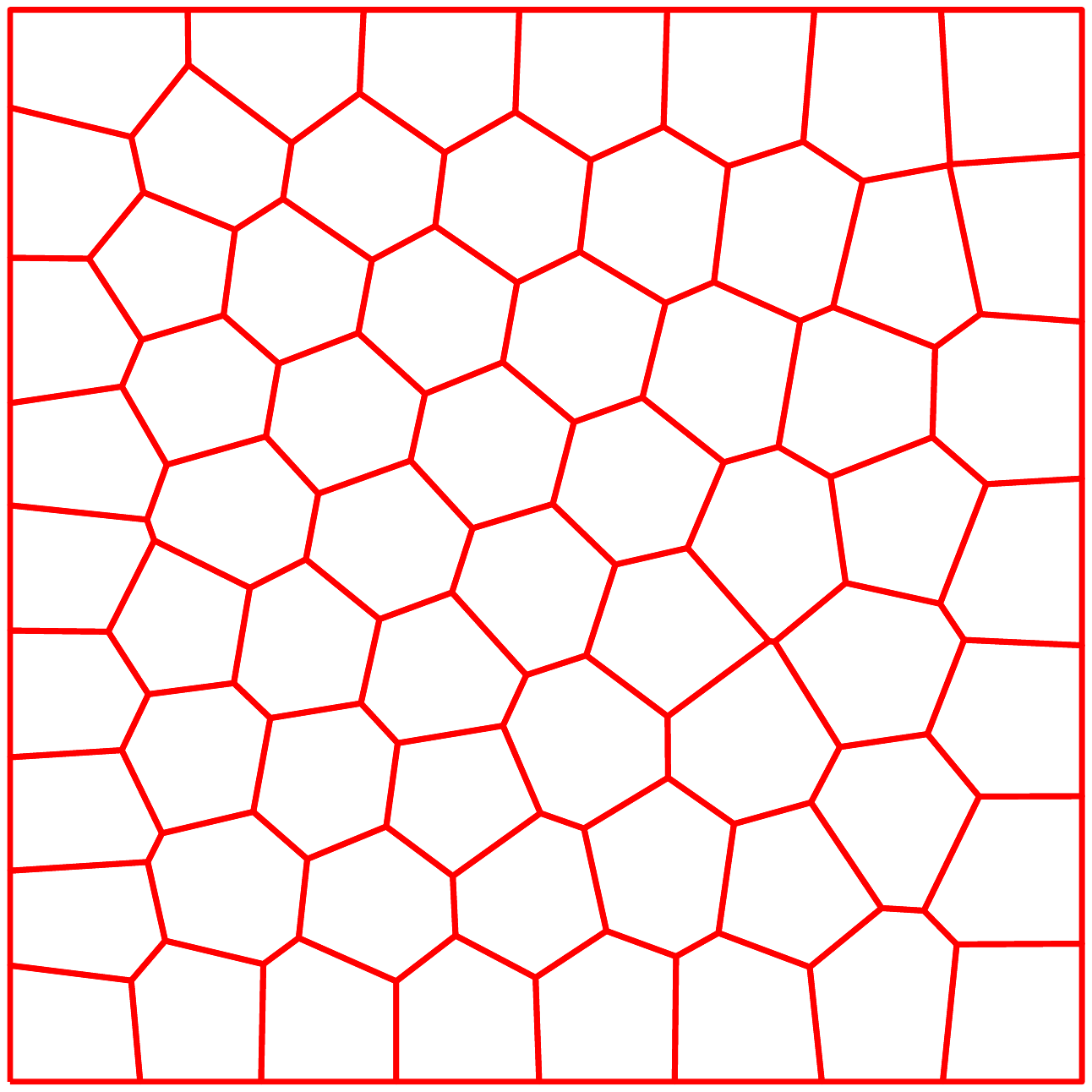}}} & \subfloat{
          \raisebox{-.45\height}{\includegraphics[width=0.17\textwidth]{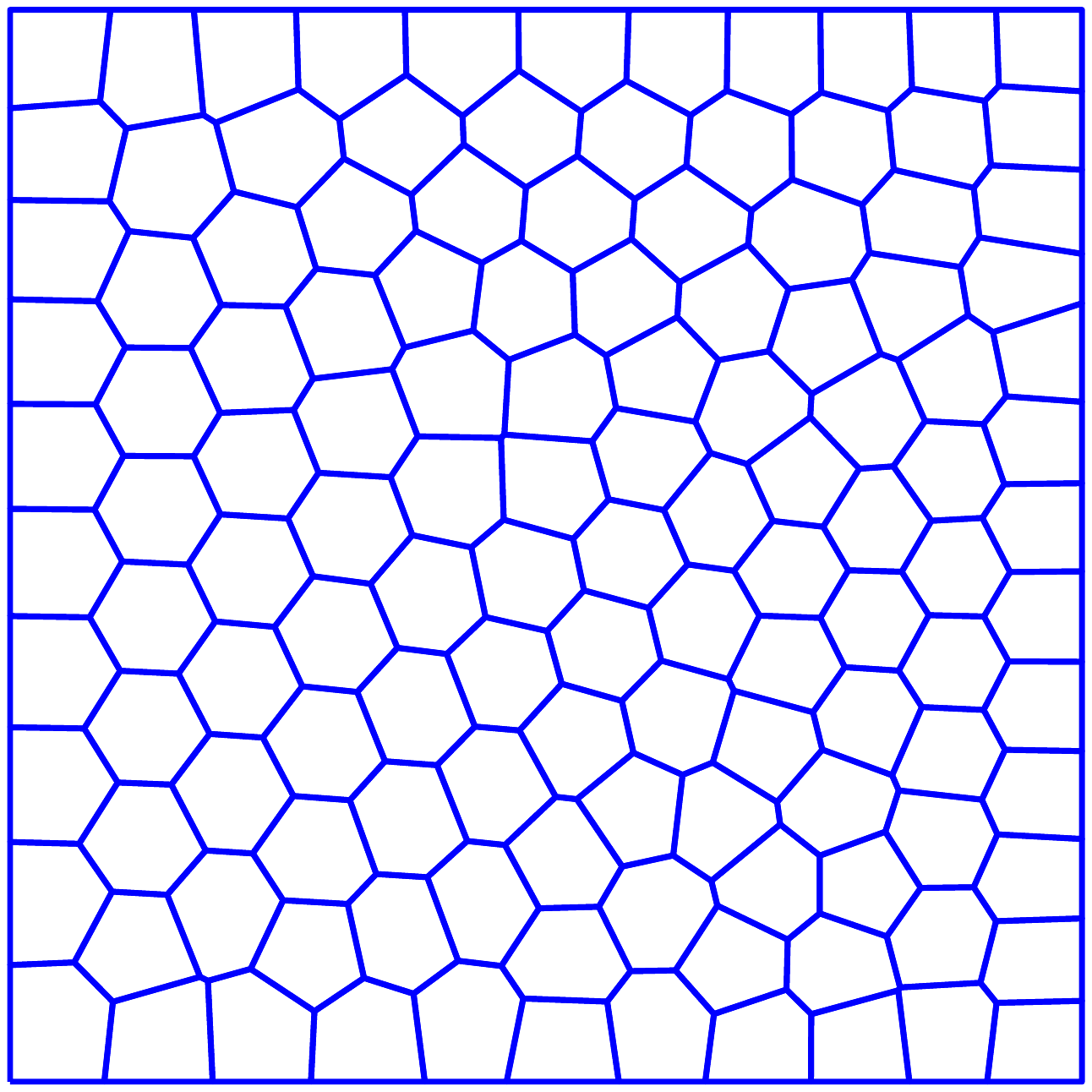}}} & \subfloat{
          \raisebox{-.45\height}{\includegraphics[width=0.17\textwidth]{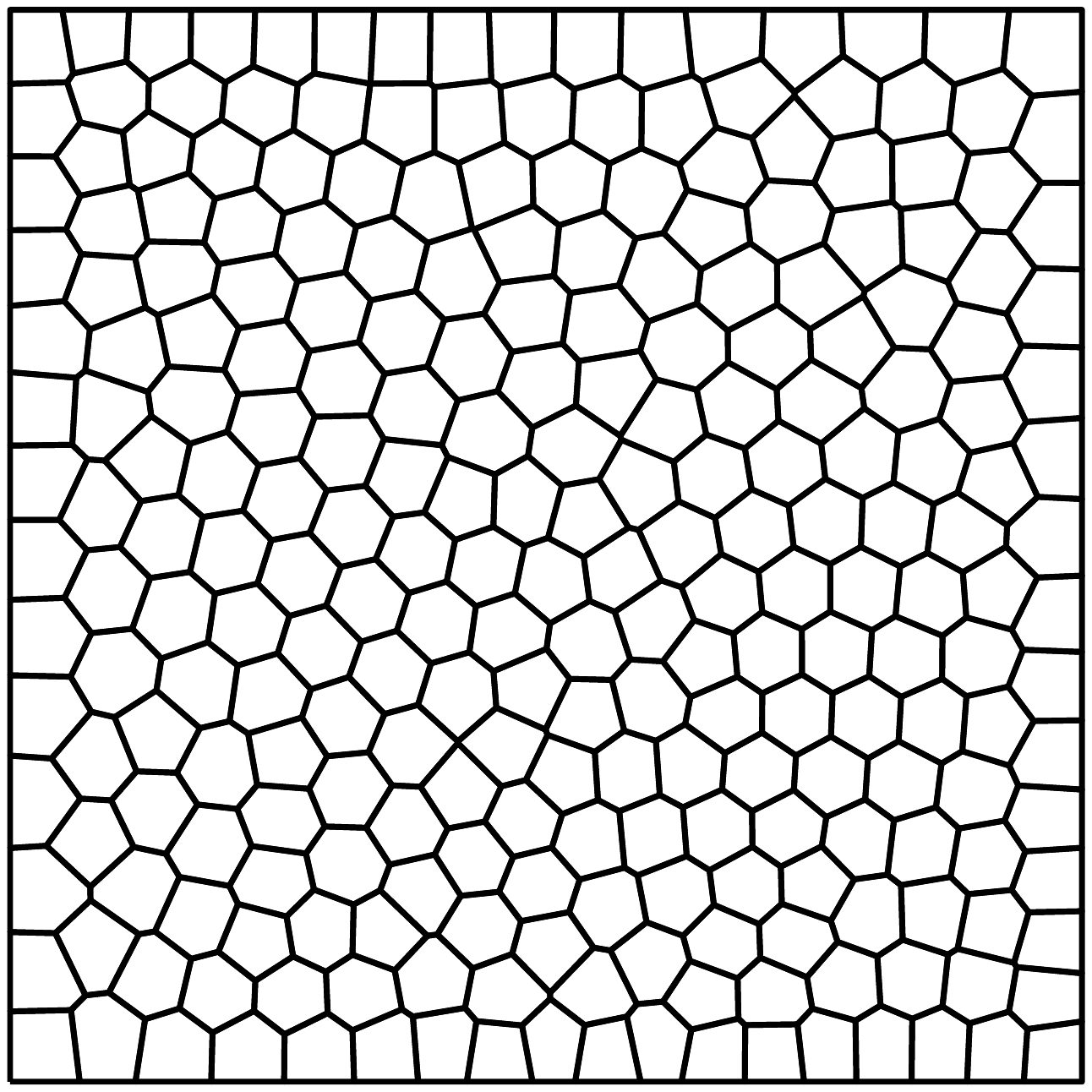}}}\\
Level 1 & \subfloat{
          \raisebox{-.45\height}{\includegraphics[width=0.17\textwidth]{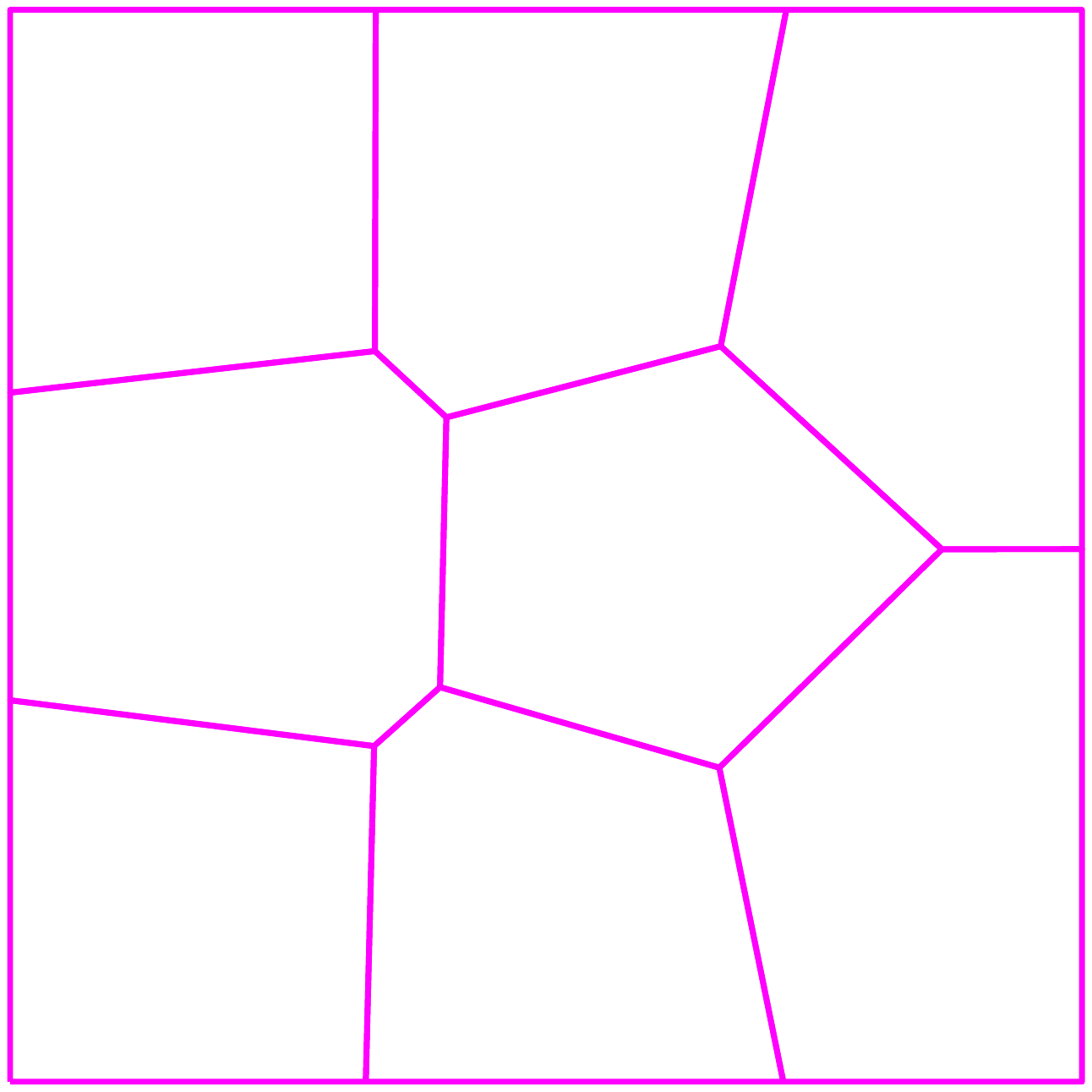}}}  & \subfloat{
          \raisebox{-.45\height}{\includegraphics[width=0.17\textwidth]{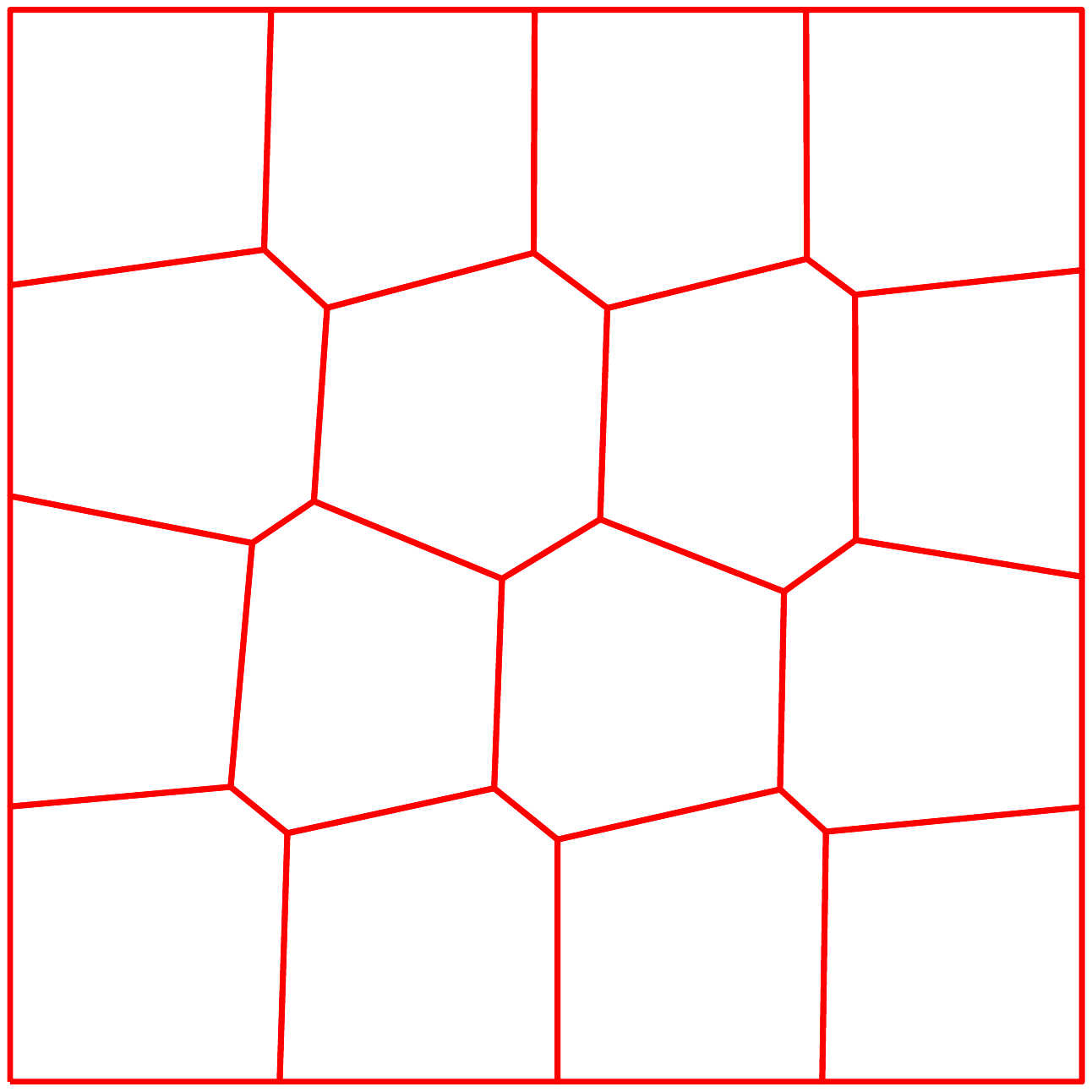}}} & \subfloat{
          \raisebox{-.45\height}{\includegraphics[width=0.17\textwidth]{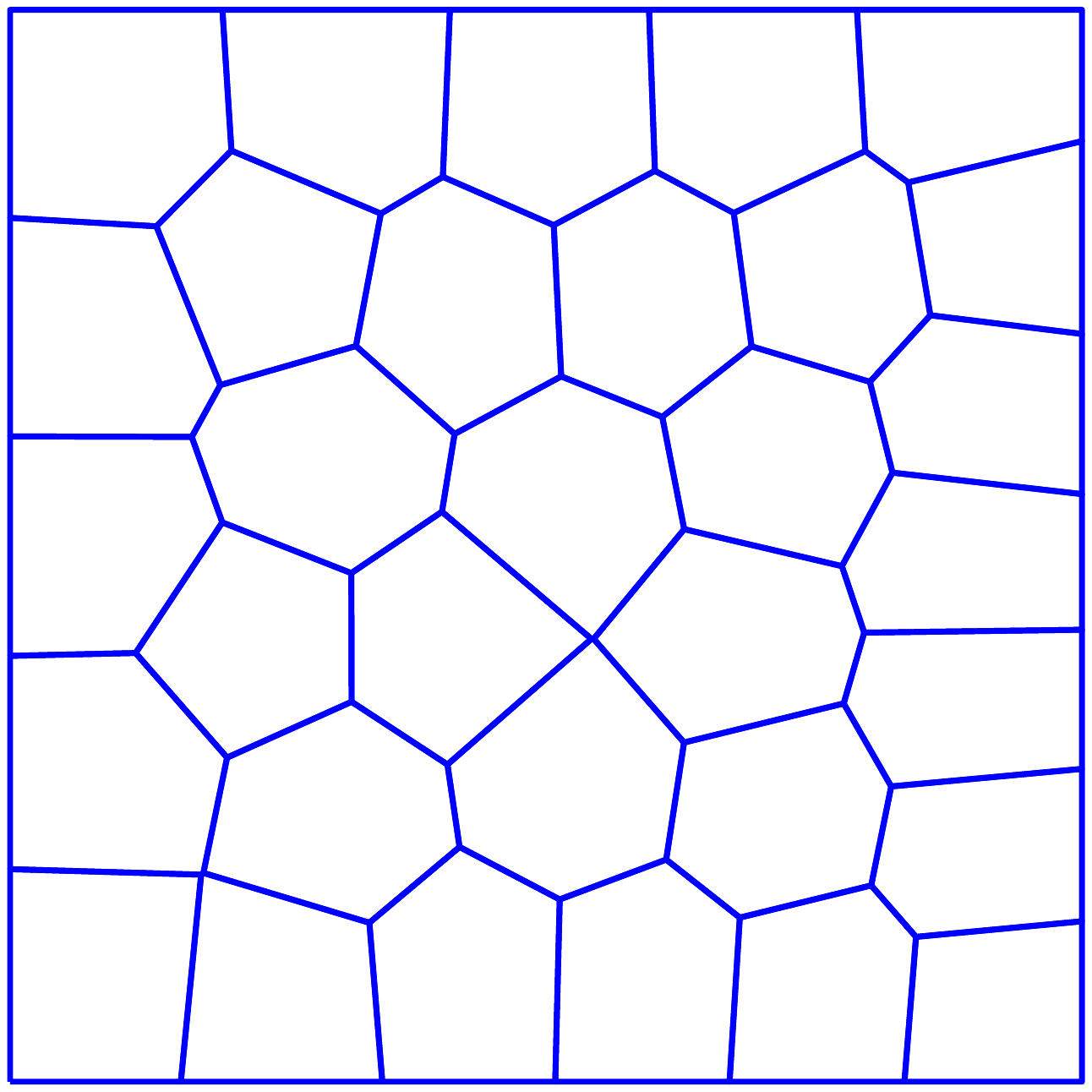}}} & \subfloat{
          \raisebox{-.45\height}{\includegraphics[width=0.17\textwidth]{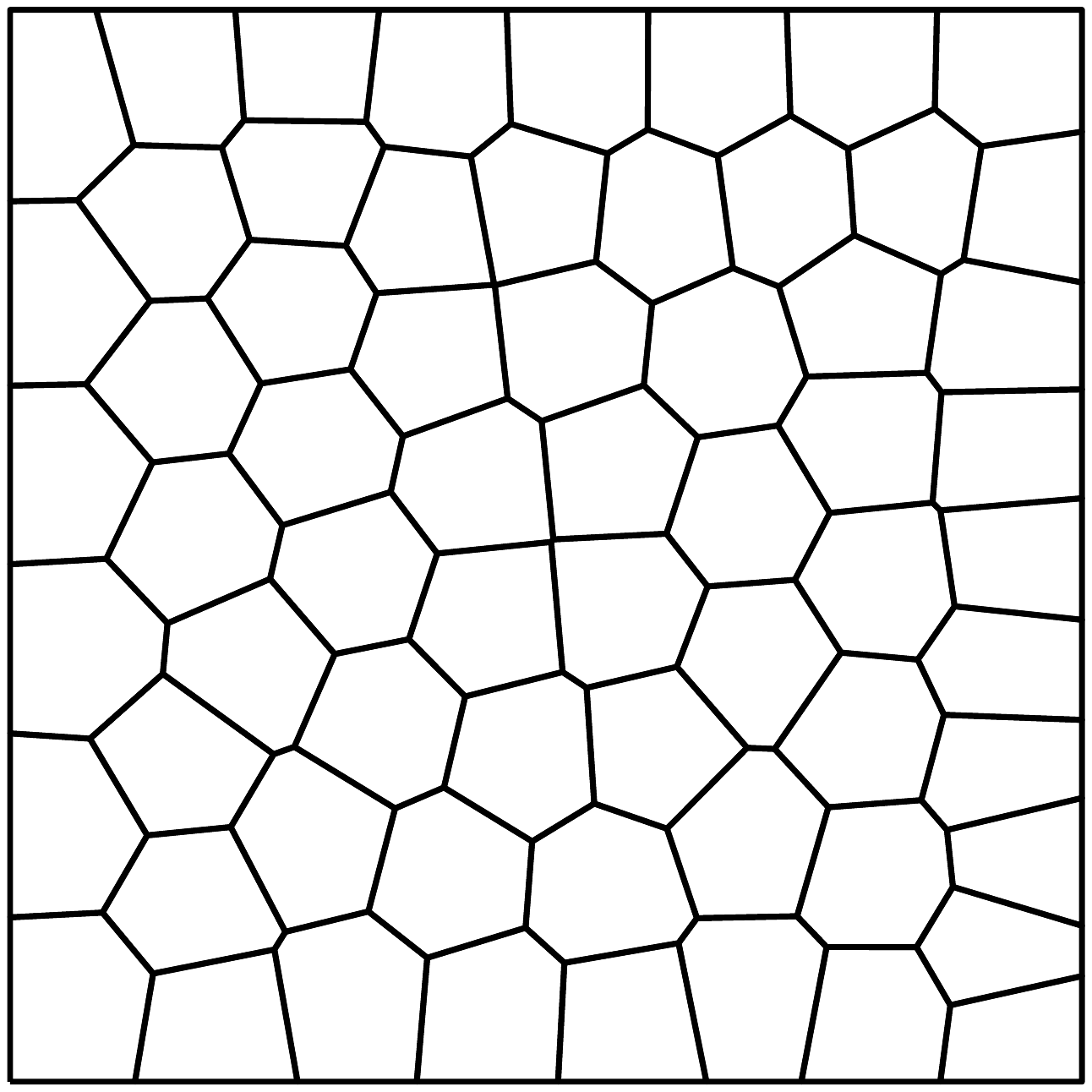}}}\\
\end{tabular}
\caption{Sets of non-nested grids employed for numerical simulations.}
\label{fig:grids}
\end{figure}

\begin{figure}[t]
\centering
\begin{tabular}{lcccc}
 &  &  &  & \\
 & \subfloat{
          \raisebox{-.45\height}{\includegraphics[width=0.17\textwidth]{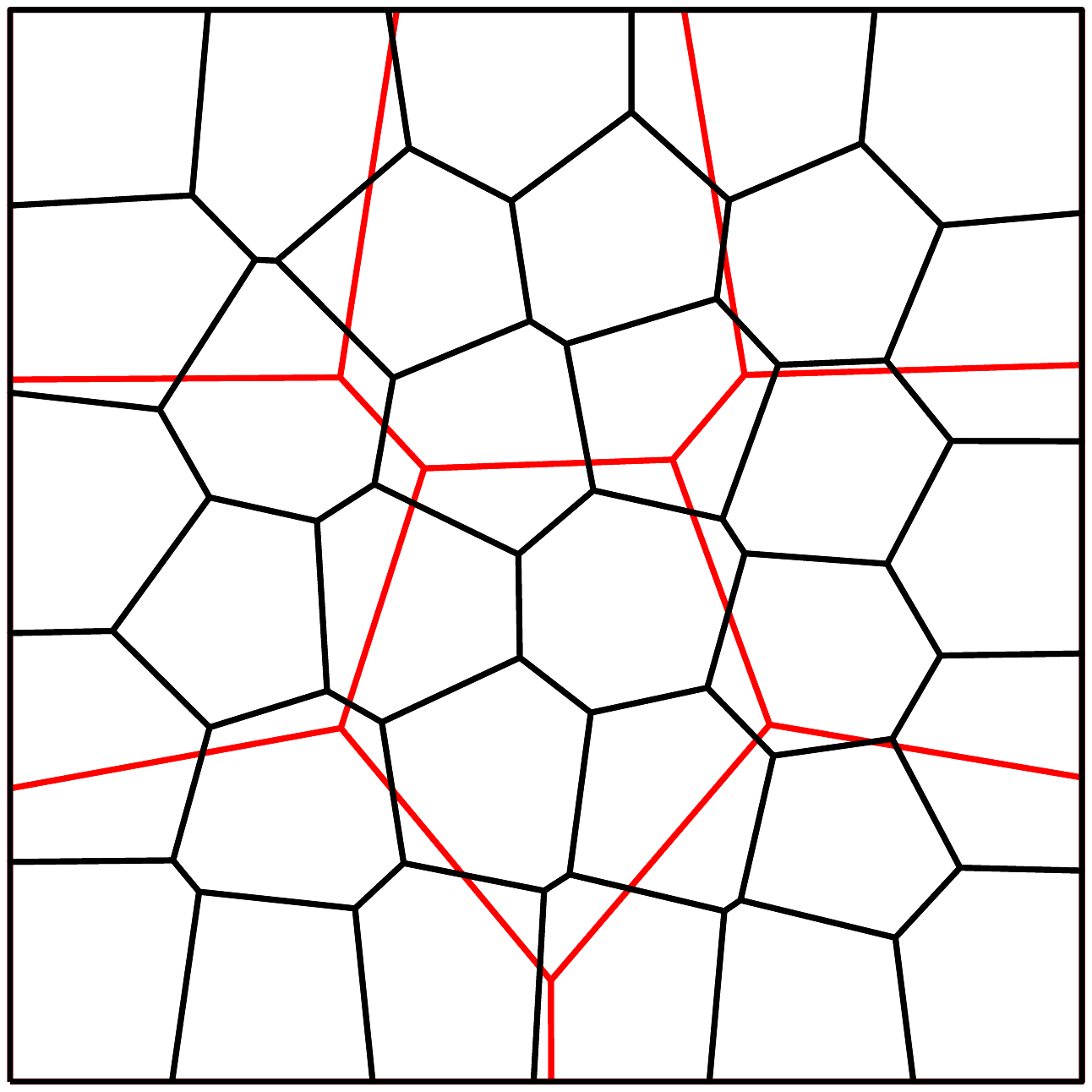}}}  & \subfloat{
          \raisebox{-.45\height}{\includegraphics[width=0.17\textwidth]{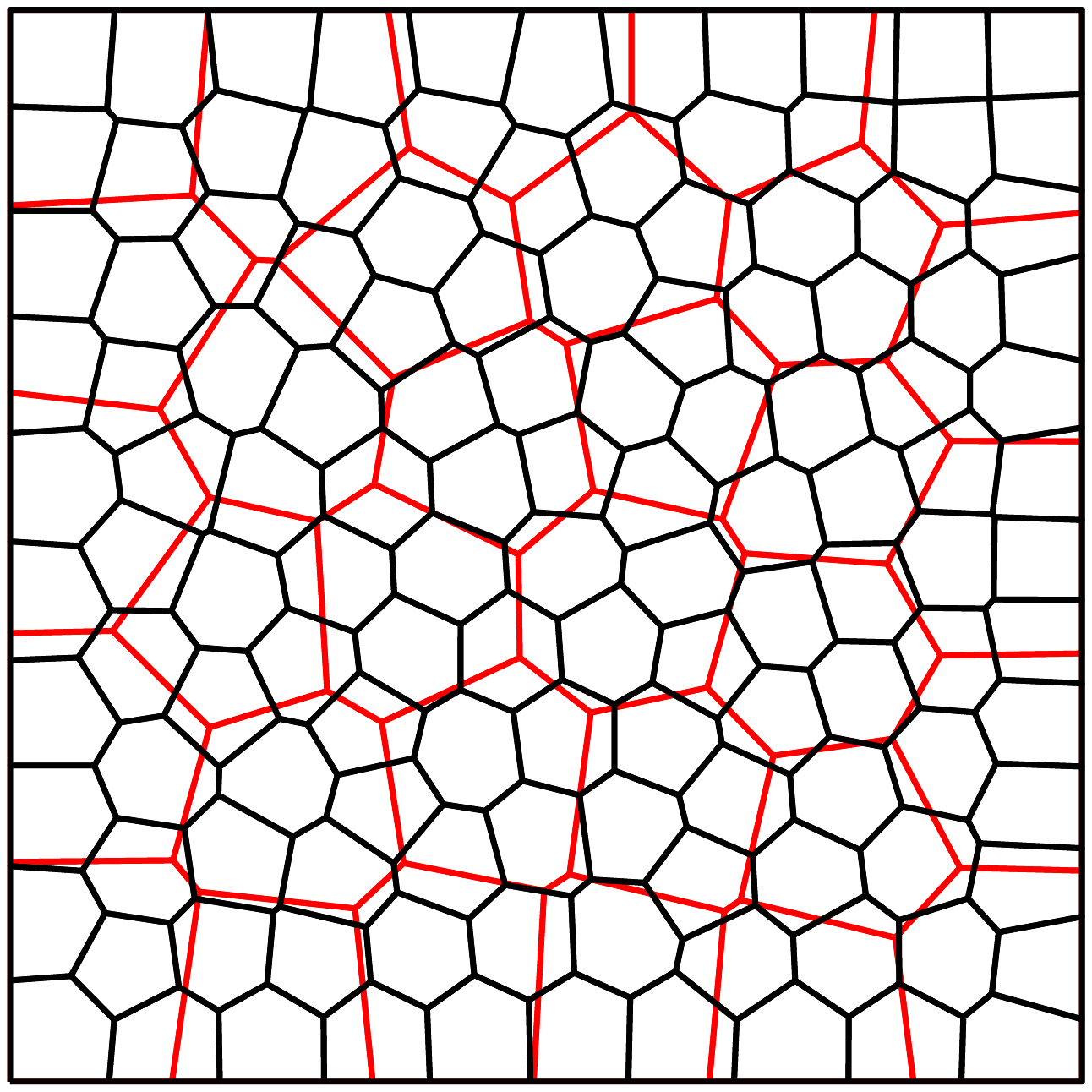}}} & \subfloat{
          \raisebox{-.45\height}{\includegraphics[width=0.17\textwidth]{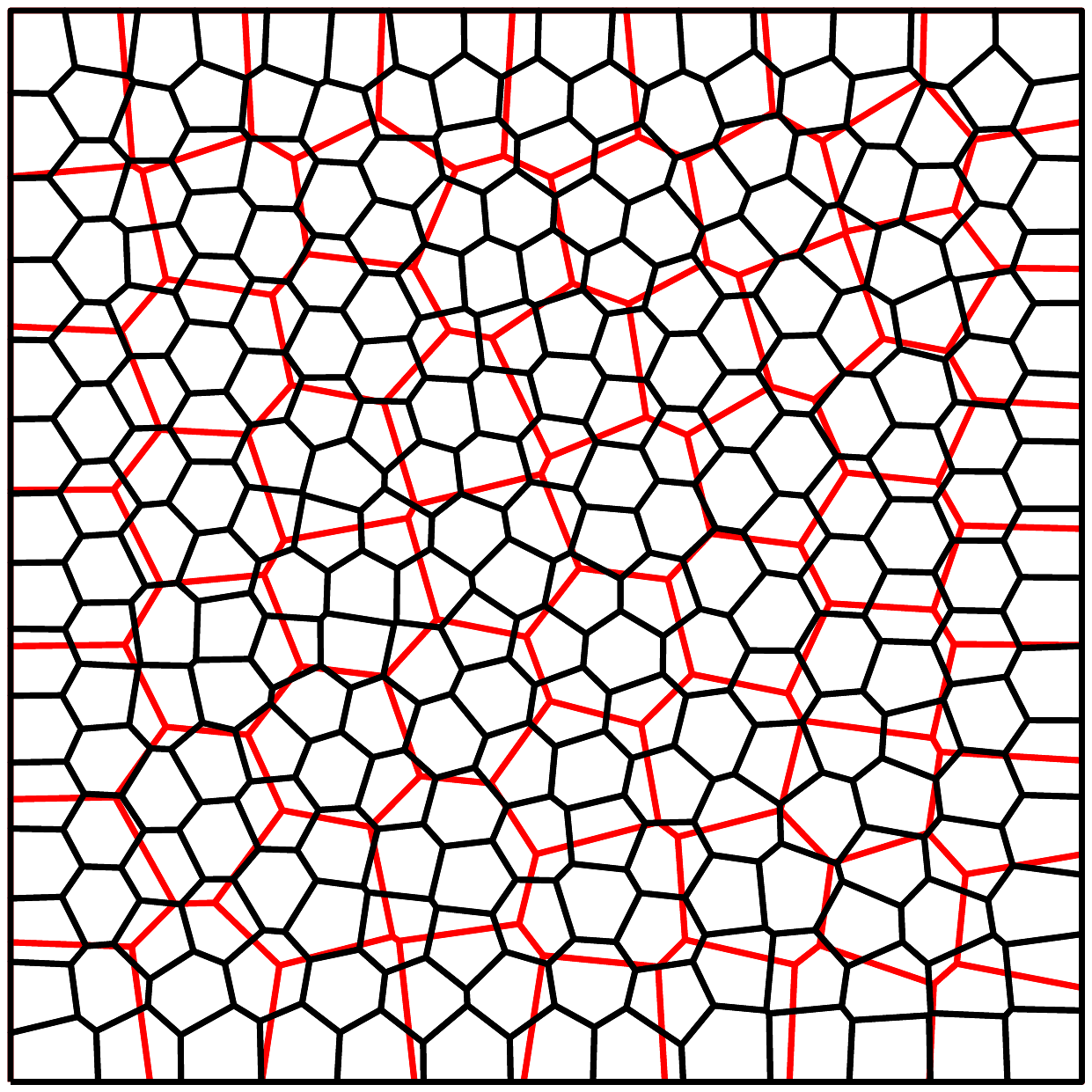}}} & \subfloat{
          \raisebox{-.45\height}{\includegraphics[width=0.17\textwidth]{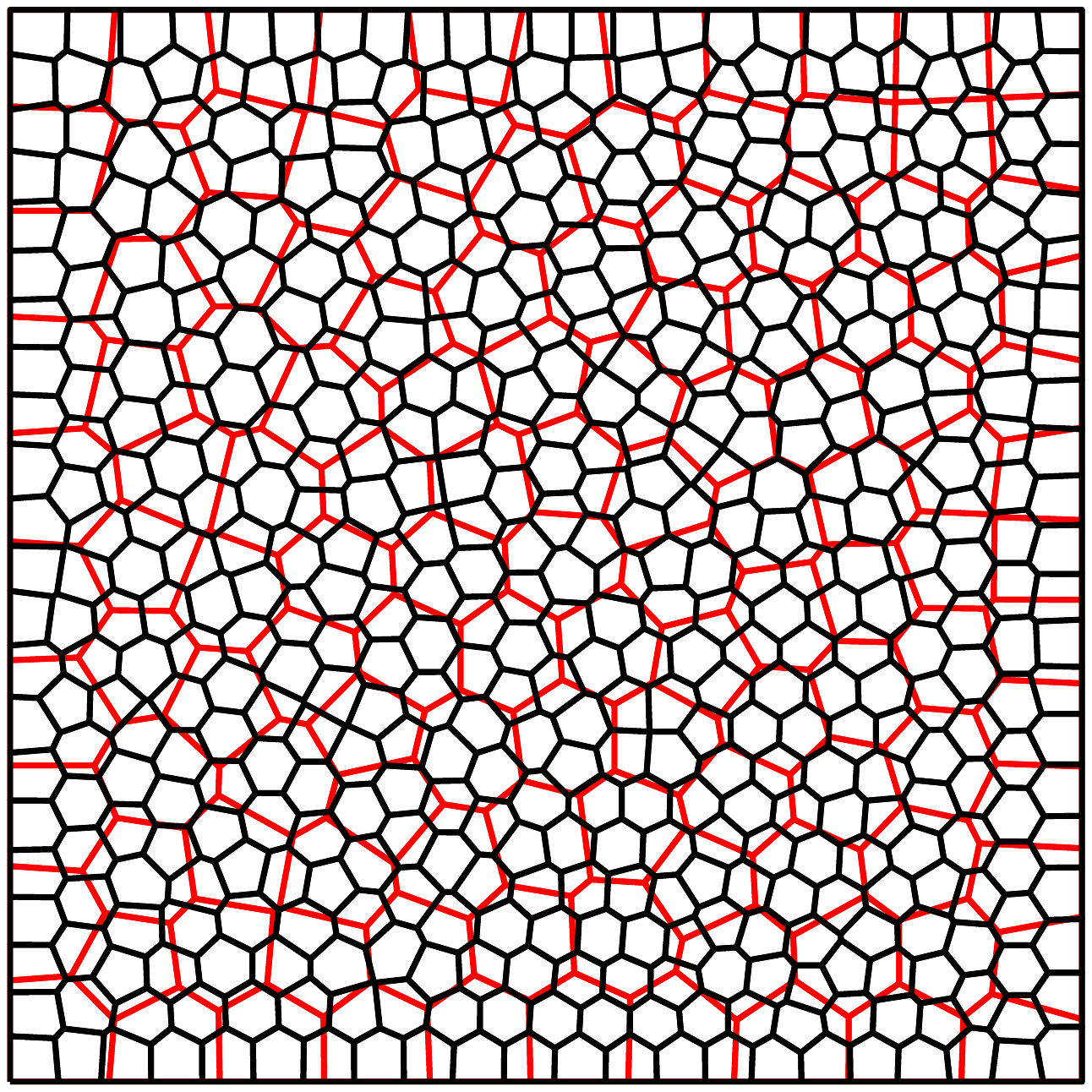}}}\\
\end{tabular}
\caption{Example of non-nested partition.}
\label{fig:non_nested}
\end{figure}

In this section we present several numerical results to test the theoretical convergence estimates provided in Theorem~\ref{thm:convergence}. We focus on a two dimensional problem on the unit square $\Omega = (0,1)^2$. For the simulations, we consider the sets of polygonal grids shown in Figure~\ref{fig:grids}. Each polygonal element mesh is generated through the Voronoi Diagram algorithm by using the software package \texttt{PolyMesher} \cite{Talietal12}. In particular the finest grids (Level 4) of Figure~\ref{fig:grids} consist of 512 (Set 1), 1024 (Set 2), 2048 (Set 3) and 4096 (Set 4) elements. Starting from the number of elements of each initial mesh, a sequence of  coarse, non-nested partitions is generated: each coarse mesh is built independently from the finer one, with the only constrain that the number of element is approximately $1/4$ of the finer one. An example of sequence of non-nested partitions is shown in Figure~\ref{fig:non_nested}. 
\medskip
\begin{figure}[t!]
\centering
\includegraphics[scale=0.35]{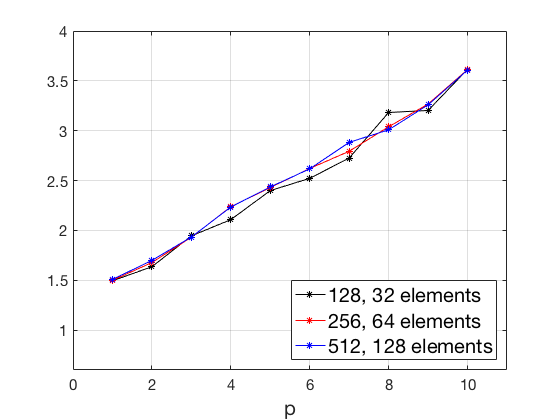}
\caption{Estimates of $\mathsf{C}_{\mathsf{stab}}(p)$ in Lemma~\ref{lemm:C_stab} as a function of $p$. Non-nested Voronoi meshes as shown in Figure~\ref{fig:grids}.}
\label{fig:estCstab}
\end{figure}

\begin{figure}[t!]
\centering
\includegraphics[scale=0.29]{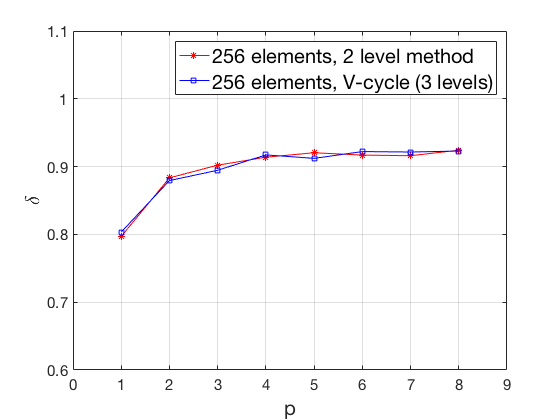}
\includegraphics[scale=0.29]{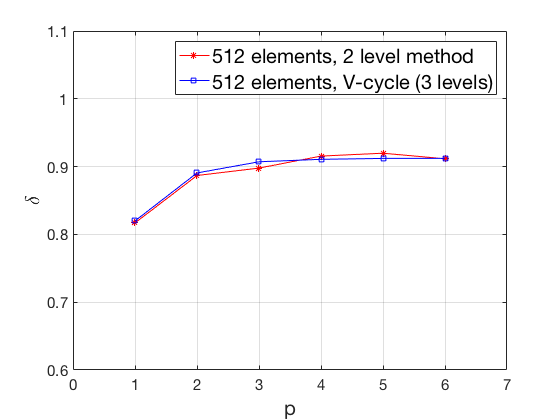}
\caption{Estimates of $\delta_2$ and $\delta_3$ in Theorem~\ref{thm:convergence} as a function of $p$, with $m_1=m_2=m=3p^2$ and two polyhedral grids of $256$ (left) and $512$ (right) elements.}
\label{fig:estDelta}
\end{figure}

First of all, we verify the estimate of Lemma~\ref{lemm:C_stab}, numerically evaluating $\mathsf{C}_{\mathsf{stab}}(p) \approx p$, where $p$ is the polynomial approximation degree. To this aim we consider three pairs of non-nested grids, where the number of elements of the coarser grid is the number of the finer divided by $4$: for each pair, we compute the value of $\mathsf{C}_{\mathsf{stab}}(p)$ as a function of $p$. Figure~\ref{fig:estCstab} show that, as expected, $\mathsf{C}_{\mathsf{stab}}(p)$ depends linearly on $p$ and is independent of the mesh-size $h$.
\vspace{6pt} 

We now consider the grids shown in Set 1 and in Set 2 of Figure~\ref{fig:grids}, and numerically evaluate the constant $\delta_j$ in Theorem~\ref{thm:convergence} based on selecting the Richardson smoother with $m_1=m_2=m=3p^2$, cf. Figure~\ref{fig:estDelta}. Here, we observe that 
$\delta_{2}$ and $\delta_3$ are asymptotically constant, as the polynomial
degree $p$ increases showing that our two-level and V-cycle algorithms are uniformly convergent also with respect to $p$ provided that $m \approx p^2$.
\vspace{6pt}

Next, we investigate the performance of the iterative Multigrid non-nested V-cycle algorithm presented in Sect.~\ref{sec:bpxvc}. In order to do that, we solve the Poisson problem with homogeneous Dirichlet boundary conditions on the unit square $\Om = (0,1)^2$, and we compute the number of iterations needed by our V-cycle algorithm to reduce the relative residual error below a given tolerance of $10^{-8}$, by varying the polynomial degree of approximation and the granularity of the finest grid. In Table~\ref{tab:hMGvsm_p1} we report the convergence factors 
\begin{equation}
\rho_J = \exp\left(\frac{1}{N_{it,J}}\ln \frac{\|\mathbf{r}_{N_{it,J}}\|}{\|\mathbf{r}_{0}\|}\right),
\end{equation}
where $N_{it,J}$ is the iteration counts needed to attain convergence of the $h$-version of the V-cycle scheme with $J$ levels, where $J=2,3,4$, while $\mathbf{r}_{N_{it,J}}$ and $\mathbf{r}_{0}$ are the final and initial residual vectors, respectively. Here the polynomial approximation degree on each level is chosen as $p_j =1$, $j=1,\dots,J$, while we vary the number of elements of the finest grid and the number of smoothing steps ($m_1=m_2=m$). According to Theorem~\ref{thm:convergence}, the convergence factor is independent from the spatial discretization step $h$, indeed, for a fixed $J\in\{2,3,4\}$ ad a fixed number of pre-smoothing steps $m$, the convergence factor is roughly constant between the 4 sets of grids. In particular, this means that the number of iterations needed from the proposed V-cycle method to attain the convergence is not influenced by the mesh refinement, contrarily of what we observe for the Conjugate Gradient (CG) method. As expected, the convergence factor is reduced by increasing the number of smoothing step.

\begin{table}[t!]
\caption{Converge factors $\rho_J$ of V-cycle multigrid method as a function of $m$ and a comparison with the iteration counts of the CG method ($C_\sigma^j \equiv C_\sigma=10$, $p=1$).}

\centering
\footnotesize
\begin{tabular}{lccc||ccc}
\hline
 \rule{0pt}{2.5ex} \rule[-1.2ex]{0pt}{0pt}   & \multicolumn{3}{c||}{Set 1, $N_{iter}^{CG} = 410$} & \multicolumn{3}{c}{Set 2, $N_{iter}^{CG} = 600$}\\ 
 \hhline{~------}
 & 2 levels & 3 levels & 4 levels & 2 levels &  3 levels & 4 levels\\
\hline
$m=3$ & 0.77 & 0.83 & 0.83     & 0.82 & 0.84 & 0.85 \\
$m=5$ & 0.69 & 0.76 & 0.78     & 0.74 & 0.77 & 0.79 \\
$m=8$ & 0.63 & 0.69 & 0.72     & 0.66 & 0.70 & 0.73 \\
\hline
\hline
\rule{0pt}{2.5ex} \rule[-1.2ex]{0pt}{0pt}   & \multicolumn{3}{c||}{Set 3, $N_{iter}^{CG} = 867$} & \multicolumn{3}{c}{Set 4, $N_{iter}^{CG} = 1228$}\\ 
 \hhline{~------}
 & 2 levels & 3 levels & 4 levels &       2 levels  &  3 levels & 4 levels\\
\hline
$m=3$ & 0.79 & 0.85 & 0.93      & 0.78 & 0.84 & 0.87 \\
$m=5$ & 0.72 & 0.79 & 0.82      & 0.71 & 0.78 & 0.81 \\
$m=8$ & 0.65 & 0.72 & 0.76      & 0.64 & 0.72 & 0.74 \\
\hline
\end{tabular}
\label{tab:hMGvsm_p1}
\end{table}

\begin{table}[t!]
\caption{Converge factors $\rho_J$ (and iterations count) of the V-cycle methods as a function of the number $m$ of pre-smoothing steps and comparison with the iteration counts of the CG method ($C_\sigma^j \equiv C_\sigma=10$, $p=3$).}

\centering
\footnotesize
\begin{tabular}{lccc||ccc}
\hline
\rule{0pt}{2.5ex} \rule[-1.2ex]{0pt}{0pt}   & \multicolumn{3}{c||}{Set 1, $N_{iter}^{CG} = 2212$} & \multicolumn{3}{c}{Set 2, $N_{iter}^{CG} = 3223$}\\ 
 \hhline{~------}
 & 2 levels & 3 levels & 4 levels & 2 levels &  3 levels & 4 levels\\
\hline
$m=3$ &  0.99 (3306) & 0.98 (992) & 0.98 (955)     & 0.97 (616) & 0.98 (852) & 0.98 (1024) \\
$m=5$ &  0.96 (429)   & 0.97 (566) & 0.97 (591)     & 0.95 (396) & 0.96 (523) & 0.97 (626) \\
$m=8$ &  0.94 (296)   & 0.95 (367) & 0.95 (388)     & 0.94 (277) & 0.95 (339) & 0.95 (403) \\
\hline
\hline
\rule{0pt}{2.5ex} \rule[-1.2ex]{0pt}{0pt}   & \multicolumn{3}{c||}{Set 3, $N_{iter}^{CG} = 4174$} & \multicolumn{3}{c}{Set 4, $N_{iter}^{CG} = 6689$}\\ 
 \hhline{~------}
 & 2 levels  & 3 levels & 4 levels &   2 levels &  3 levels & 4 levels\\
\hline
$m=3$ & -                & 0.98 (1061) & 0.98 (860)      & -                & 0.97 (699) &  0.98 (823) \\
$m=5$ & 0.96 (428) & 0.97 (648)   & 0.97 (527)      & 0.95 (392) & 0.96 (435) &  0.96 (508) \\
$m=8$ & 0.94 (288) & 0.96 (418)   & 0.95 (341)      & 0.93 (273) & 0.94 (290) &  0.95 (335) \\
\hline
\hline
\end{tabular}
\label{tab:hMGvsm_p3}
\end{table}

We have repeated the same set of experiments employing $p_j=3,\ \forall j=1,\dots,J$; the results are reported in Table~\ref{tab:hMGvsm_p3}, where we also have reported the iterations count (between parenthesis). Firstly, a comparison between Table~\ref{tab:hMGvsm_p1} and Table~\ref{tab:hMGvsm_p3} confirms that the convergence factor increases as $p$ grows up if the number of smoothing steps is kept fixed. Secondly, we observe that if the number of smoothing step is kept too small then the convergence of the method could not be guaranteed: indeed, according to Theorem~\ref{thm:convergence}, a uniformly convergent (also with respect to $p$) solver require a number of smoothing steps $m > 2C_1 C_Q \gtrsim p^{2+\mu}$ as shown in Figure~\ref{fig:estDelta}. If $m$ is big enough, we observe that also in this case the number of iterations does not depend from the granularity of the underlying mesh, while the iterations count of the Conjugate Gradient method is growing if $h$ decrease.

\section{Additive Schwarz smoother}\label{sec:ASSmoth}
In order to improve the convergence properties of the V-cycle algorithm studied above, we define in this section a domain decomposition preconditioner that we will use as a smoothing operator instead of the Richardson iteration. To this end, let $\mcal[T][j]$ and $\mcal[T][j-1]$ be  respectively the finer and the coarser non-nested meshes, satisfying the grid assumptions given in Sect.~\ref{subsection_Grid_assumption}. We then introduce the \textit{local} and \textit{coarse} solvers, that are the key ingredients in the definition of the smoother on the space $V_j,\ j=2,\dots,J$.
\medskip

\textbf{Local Solvers}. Let us consider the finest mesh $\mcal[T][j]$ with cardinality $N_j$, then for each element $\elem_i \in\mcal[T][j]$, we define a local space $V_j^i$ as the restriction of the DG finite element space $V_j$ to the element $\elem_i \in \mcal[T][j]$:
\begin{equation}
V_j^i = V_j|_{\elem_i} \equiv \mcal[P][p_j](\elem_i) \qquad \forall i = 1,...,N_j,
\end{equation}
and for each local space, the associated local bilinear form is defined by
\begin{equation}
\mathcal{A}_j^i: V_j^i \times V_j^i \rightarrow \mathbb{R}, \quad \mathcal{A}_j^i(u_i,v_i) = \Aa[j][R_i^T u_i][R_i^T v_i] \quad \forall u_i,v_i \in V^i,
\end{equation}
where $R_i^T:V_j^i \rightarrow V_j$ denotes the classical extension by-zero operator from the local space $V_j^i$ to the global $V_j$.
\medskip

\textbf{Coarse Solver.} The natural choice in our contest is to define the coarse space $V^0_j$ to be exactly the same used for the \textit{Coarse grid correction} step of the V-cycle algorithm introduced in Sect.~\ref{sec:bpxvc}, that is 
\begin{equation} 
V^0_j = V_{j-1} \equiv \{v\in L^2(\Om):v|_\elem\in \mcal[P][p_{j-1}](\elem),\elem\in\mcal[T][j-1]\},
\end{equation}
the bilinear form on $V^0_j$ is then given by 
\begin{equation} 
\mathcal{A}_j^0: V_j^0 \times V_j^0 \rightarrow \mathbb{R}, \quad \mathcal{A}_j^0(u_0,v_0) = \Aa[j-1][u_0][v_0] \quad \forall u_0,v_0 \in V_j^0.
\end{equation}
We also define the injection operator from $V_j^0$ to $V_j$: conversely with respect to the case where the coarser mesh is obtained by agglomeration, here the injection operator is not trivial, and it is defined as the prolongation operator introduced in Sect.~\ref{sec:bpxvc}, that is $R_0^T:V_j^0 \rightarrow V_j,\ R_0^T = I_{j-1}^j.$ By introducing the projection operators $P_i = R_i^T \tilde{P_i}: V_j \rightarrow V_j, \text{ }i=0,1,\dots,N_j,$ where 
\begin{align}
&\tilde{P_i}:V_j \rightarrow V^i_j, \quad \mathcal{A}_j^i(\tilde{P_i}v_h, w_i) = \Aa[j][v_h][R_i^T w_i] \quad \forall w_i \in V^i_j, \quad i=1,\dots,N_j,\\
&\tilde{P_0}:V_j \rightarrow V^0_j, \quad \mathcal{A}_j^0(\tilde{P_0}v_h, w_0) = \Aa[j][v_h][R_0^T w_0] \quad \forall w_0 \in V^0_j,
\end{align}
the additive Schwarz operator is defined by $P_{ad} = \sum_{i=0}^{N_j} (R_i^T (A^i_j)^{-1} R_i) A_j \equiv B^{-1}_{ad}A_j,$ where $B^{-1}_{ad} = \sum_{i=0}^{N_j} (R_i^T (A^i_j)^{-1} R_i) $ is the preconditioner. Then, the \textit{Additive Schwarz} smoothing operator with $m$ steps consists in performing $m$ iterations of the \textit{Preconditioned Conjugate Gradient} method using $B_{ad}$ as preconditioner. In Algorithm~\ref{alg:ASMG} we outline the V-cycle multigrid method using $P_{ad}$ as a smoother. Here, $\mathsf{MG_{\mathcal{AS}}} (j, g, z_0,m_1,m_2)$ denotes the approximate solution of $A_jz=g$ obtained after one iteration, with initial guess $z_0$ and $m_1$, $m_2$ pre- and post-smoothing steps, respectively. Here, the smoothing step is performed by the algorithm $ASPCG$, i.e., $z = ASPCG(A, z_0, g, m)$ represents the output of $m$ steps of \textit{Preconditioned Conjugate Gradient} method applied to the linear system of equations $Ax = g,$ by using $B_{as}$ as preconditioner and starting with the initial guess $z_0$.

\begin{algorithm}[t!]
\caption{One iteration of Multigrid V-cycle scheme with AS-smoother}
\label{alg:ASMG}
\begin{algorithmic}
\State \underline{{\it  Pre-smoothing}}:
\If{j=1}
\State $\mathsf{MG}_\mathcal{AS} (1,g,z_0,m_1,m_2) = A_1^{-1}g.$
\Else
\State \underline{{\it  Pre-smoothing}}:
\State $z^{(m_1)} = ASPCG(A_j, z_0, g, m_1)$;\vspace{0.3cm}
\State \underline{{\it  Coarse grid correction}}:
\State $r_{j-1} = I_j^{j-1}(g-A_jz^{(m_1)})$;
\State $e_{j-1} = \mathsf{MG}_\mathcal{AS} (j-1,r_{j-1},0,m_1,m_2)$;
\State $z^{(m_1+1)}=z^{(m_1)}+I_{j-1}^je_{j-1}$;\vspace{0.3cm}
\State \underline{{\it  Post-smoothing}}:
\State $z^{(m_1+m_1+1)} = ASPCG(A_j, z^{(m_1+1)}, g, m_2);$\vspace{0.3cm}
\State $\mathsf{MG}_\mathcal{AS} (j,g,z_0,m_1,m_2)=z^{(m_1+m_2+1)}.$
\EndIf
\end{algorithmic}
\end{algorithm}

The numerical performance of Algorithm~\ref{alg:ASMG} are reported in Tables~\ref{tab:hMGvsm_p1_AS}, \ref{tab:hMGvsm_p3_AS} and \ref{tab:hMGvsm_p5_AS}, for the corresponding V-cycle algorithm with $J=2,3,4$ levels. The simulations are similar to the ones described in the previous section: here we used the grids of Set 2, 3 and 4 of Figure~\ref{fig:grids}, and we varied the polynomial degree $p \in \{1,3,5\}$. Firstly, we observe that, also in this case, the number of iteration does not increase with the number of elements in the underlying mesh for a fixed number of smoothing steps $m$; moreover, we does not observe the constrain from below required to the number of smoothing steps with respect to the degree of approximation: the method converges also with high degree of approximation and a small number of smoothing steps.
Finally, Table~\ref{tab:hMGvsm_p43_AS} shows the numerical results relatives to an example of $hp$-multigrid, characterized by a choice of different polynomial degrees of approximation between non-nested space: also in this case we observe that the number of iterations is independent of the granularity of the finest mesh, and we have convergence for any choice of smoothing steps $m$.  

\begin{table}[t!]
\caption{Iteration counts of the V-cycle solvers with the Additive Schwarz smoother as a function of $m$ ($C_\sigma^j \equiv C_\sigma=10$, $p=1$).}
\centering
\footnotesize
\begin{tabular}{|lccc|ccc|ccc|}
\hline
 & \multicolumn{3}{c|}{Set 2}  & \multicolumn{3}{c|}{Set 3} & \multicolumn{3}{c|}{Set 4} \\ 
\hhline{~---------}
 & 2 lev. &  3 lev. & 4 lev.  & 2 lev. &  3 lev. & 4 lev.   & 2 lev. &  3 lev. & 4 lev.  \\
\hline
$m=3$    & 18   & 18  & 18        & 18   & 18   & 18       & 20   & 20  & 20  \\
$m=5$    & 9     & 9    & 9          & 9     & 9     & 9         & 10   & 10  & 10  \\
$m=8$    & 5     & 5    & 5          & 5     & 5     & 5         & 5     & 5    & 5    \\
\hline
\rule{0pt}{2.5ex} \rule[-1.2ex]{0pt}{0pt} & \multicolumn{3}{c|}{$N_{iter}^{CG} = 600$} & \multicolumn{3}{c|}{$N_{iter}^{CG} = 867$} & \multicolumn{3}{c|}{$N_{iter}^{CG} = 1228$}\\  
\hline
\end{tabular}
\label{tab:hMGvsm_p1_AS}
\end{table}

\begin{table}[t!]
\caption{Iteration counts of the V-cycle solvers with the Additive Schwarz smoother as a function of $m$ ($C_\sigma^j \equiv C_\sigma=10$, $p=3$).}
\centering
\footnotesize
\begin{tabular}{|lccc|ccc|ccc|}
\hline
 & \multicolumn{3}{c|}{Set 2}  & \multicolumn{3}{c|}{Set 3} & \multicolumn{3}{c|}{Set 4} \\ 
 \hhline{~---------}
 & 2 lev. &  3 lev. & 4 lev.  & 2 lev. &  3 lev. & 4 lev.   & 2 lev. &  3 lev. & 4 lev.  \\
\hline
$m=3$          & 63   & 64  & 64     & 57   & 59   & 59       & 59   & 60  & 60  \\
$m=5$          & 27   & 27  & 27     & 25   & 25   & 25       & 26   & 26  & 26  \\
$m=8$          & 13   & 13  & 13     & 13   & 14   & 14       & 14   & 14  & 14  \\
\hline
\rule{0pt}{2.5ex} \rule[-1.2ex]{0pt}{0pt}  & \multicolumn{3}{c|}{$N_{iter}^{CG} = 3223$} & \multicolumn{3}{c|}{$N_{iter}^{CG} = 4174$} & \multicolumn{3}{c|}{$N_{iter}^{CG} = 6689$}\\  
\hline
\end{tabular}
\label{tab:hMGvsm_p3_AS}
\end{table}

\begin{table}[t!]
\caption{Iteration counts of the V-cycle solvers with the Additive Schwarz smoother as a function of $m$ ($C_\sigma^j \equiv C_\sigma=10$, $p=5$).}
\centering
\footnotesize
\begin{tabular}{|lccc|ccc|ccc|}
\hline
 & \multicolumn{3}{c|}{Set 2}  & \multicolumn{3}{c|}{Set 3} & \multicolumn{3}{c|}{Set 4} \\ 
 \hhline{~---------}
 & 2 lev. &  3 lev. & 4 lev.  & 2 lev. &  3 lev. & 4 lev.   & 2 lev. &  3 lev. & 4 lev.  \\
\hline
$m=3$   & 148 & 156 & 156     & 125   & 132 & 132     & 149  & 158 & 157  \\
$m=5$   & 59   & 59   & 58       & 51     & 51   & 51       & 59    & 60   & 60  \\
$m=8$   & 26   & 26   & 26       & 24     & 24   & 24       & 27    & 27   & 27  \\
\hline
\rule{0pt}{2.5ex} \rule[-1.2ex]{0pt}{0pt}  & \multicolumn{3}{c|}{$N_{iter}^{CG} = 7676$} & \multicolumn{3}{c|}{$N_{iter}^{CG} = 11525$} & \multicolumn{3}{c|}{$N_{iter}^{CG} = 15814$}\\  
\hline
\end{tabular}
\label{tab:hMGvsm_p5_AS}
\end{table}

\begin{table}[t!]
\caption{Iteration counts of the $hp$-version of the V-cycle solvers with the Additive Schwarz smoother as a function of $m$. Here the polynomial degree on each space is $p_j = j$ for $j = 1,2,3,4$.}
\centering
\footnotesize
\begin{tabular}{|lccc|ccc|ccc|}
\hline
 & \multicolumn{3}{c|}{Set 2}  & \multicolumn{3}{c|}{Set 3} & \multicolumn{3}{c|}{Set 4} \\ 
 \hhline{~---------}
 & 2 lev. &  3 lev. & 4 lev.  & 2 lev. &  3 lev. & 4 lev.   & 2 lev. &  3 lev. & 4 lev.  \\
\hline
$m=3$   & 85   & 86   & 86       & 79     & 80   & 80       & 83     & 85   & 84  \\
$m=5$   & 35   & 35   & 35       & 32     & 32   & 32       & 33     & 33   & 33  \\
$m=8$   & 17   & 17   & 17       & 17     & 17   & 17       & 17     & 18   & 17  \\
\hline
\rule{0pt}{2.5ex} \rule[-1.2ex]{0pt}{0pt}  & \multicolumn{3}{c|}{$N_{iter}^{CG} = 5108$} & \multicolumn{3}{c|}{$N_{iter}^{CG} = 7697$} & \multicolumn{3}{c|}{$N_{iter}^{CG} = 10572$}\\  
\hline
\end{tabular}
\label{tab:hMGvsm_p43_AS}
\end{table}

\appendix

\section{Proof of Lemma~\ref{lem:inversecont}}\label{appx:trace}
\begin{proof}[of Lemma~\ref{lem:inversecont}]
We follow the idea of \cite[Proof of Lemma~1.49]{DiPiEr}.
First of all, we observe that 
\begin{equation}\label{eq:L2boundary}
\normL[v][2][\partial \elem][2] = \sum_{F \subset \partial \elem} \normL[v][2][F][2].
\end{equation} 
For each face $F \subset \partial \elem$ let $T_F \subset \elem$ be a $d$-dimensional simplex sharing the face $F$ with $\elem$ and satisfying the Assumption~\ref{ass1}: in $T_F$ we define a function $\sigma_F$ as follow: 
\begin{equation}
\sigma_F: \textbf{x} \in \overline{T_F} \mapsto \sigma_F(\textbf{x}) = \frac{| F |}{d |T_F|} (\textbf{x} - \textbf{v}_F),
\end{equation} 
where $\textbf{v}_F$ is the vertex of the simplex $T_F$ opposite to the face $F$. We observe that:
\begin{itemize}
\item $\sigma_F(\textbf{x}) \cdot \textbf{n}_F = \frac{| F |}{d |T_F|} \tilde{h}\ \forall\textbf{x} \in F$, where $\tilde{h}$ is the height of the simplex respect to the face $F$, that is also $\tilde{h} = \frac{d |T_F|}{| F|}$, then $\sigma_F|_F \cdot \textbf{n}_F = 1$;
\item $\sigma_F|_{F'} \cdot \textbf{n}_{F'} = 0\ \forall \text{ faces } F' \subset \partial T_F, F' \ne F$; 
\end{itemize}
then we have:
\begin{align}
\normL[v][2][F][2]  & = \int_{F} | v |^2 d\sigma = \int_{\partial T_F} | v |^2 \sigma_F \cdot \textbf{n}_F d\sigma = \int_{T} \nabla \cdot (|v|^2 \sigma_F) d\textbf{x} \\ &= \int_{T} 2v\nabla v \cdot \sigma_F d\textbf{x} + \int_{T} |v|^2 \nabla \cdot  \sigma_F d\textbf{x};
\end{align}
now the following properties hold for $\sigma_F$:
\begin{itemize}
\item $\nabla \cdot \sigma_F = \nabla \cdot \frac{| F |}{d |T_F|} (\textbf{x} - \textbf{v}_F) = \frac{| F |}{d |T_F|} \nabla \cdot \textbf{x} = \frac{| F |}{|T_F|}$;
\item $\| \sigma_F \|_{[L^{\infty}(T_F)]^d} = \frac{| F |}{d |T_F|} h_{T} \le \frac{| F |}{d |T_F|} h_{\elem}$, 
\end{itemize}
\noindent which implies:
\begin{align}
\normL[v][2][F][2] & \le 2 \| \sigma_F \|_{[L^{\infty}(T_F)]^d} \| v \nabla v \|_{[L^1(T_F)]^d} + \frac{| F |}{|T_F|} \| v \|_{L^2(T_F)}^2, \\
& \le 2 \frac{| F |}{d |T_F|} h_{\elem} \| v \|_{L^2(T_F)} | v |_{H^1(T_F)} + \frac{| F |}{|T_F|} \| v \|_{L^2(T_F)}^2, 
\end{align}
using the Assumption~\ref{ass1} we have
\begin{equation}
\normL[v][2][F][2] \le 2C \| v \|_{L^2(T_F)} | v |_{H^1(T_F)} + \frac{Cd}{h_{\elem}} \| v \|_{L^2(T_F)}^2.
\end{equation}
By using Young Inequality we could bound 
\begin{equation}
\| v \|_{L^2(T_F)} | v |_{H^1(T_F)} \le \frac{1}{2} \Bigl( \frac{\epsilon}{h_{\elem}} \| v \|_{L^2(T_F)}^2 + \frac{h_{\elem}}{\epsilon} | v |_{H^1(T_F)}^2 \Bigr),
\end{equation}
where we have chosen $\epsilon \ge 1$. Using the previous inequality we have
\begin{equation}\label{eq:trace_face}
\normL[v][2][F][2] \le 2Cd\Bigl( \frac{\epsilon}{h_{\elem}} \| v \|_{L^2(T_F)}^2 + \frac{h_{\elem}}{\epsilon} | v |_{H^1(T_F)}^2 \Bigr).
\end{equation}
we observe that \eqref{eq:trace_face} holds $\forall F \subset \partial \elem$. Then, thanks to \eqref{eq:L2boundary}, we have
\begin{align}
\normL[v][2][\partial \elem][2] & = \sum_{F \subset \partial \elem} \normL[v][2][F][2] \le \sum_{F \subset \partial \elem} 2Cd\Bigl( \frac{\epsilon}{h_{\elem}} \| v \|_{L^2(T_F)}^2 + \frac{h_{\elem}}{\epsilon} | v |_{H^1(T_F)}^2\Bigr) \\
& = 2Cd \Bigl( \frac{\epsilon}{h_{\elem}} \sum_{F \subset \partial \elem}\| v \|_{L^2(T_F)}^2 + \frac{h_{\elem}}{\epsilon} \sum_{F \subset \partial \elem}| v |_{H^1(T_F)}^2 \Bigr)\\
& \le 2Cd \Bigl( \frac{\epsilon}{h_{\elem}} \| v \|_{L^2(\elem)}^2 + \frac{h_{\elem}}{\epsilon} | v |_{H^1(\elem)}^2 \Bigr),
\end{align} 
where in the last inequality we have used the fact that the simplices of the set $\{T_F: F \subset \partial \elem \}$ satisfy Assumption~\ref{ass1}, in the sense that they are disjoints and  $\cup_{F \subset \partial \elem} \overline{T_F} \subset \overline{\elem}$.
\end{proof}

\section{Proof of Lemma~\ref{lem:QuasiStability}}\label{appx:app_prop}
In order to show Lemma~\ref{lem:QuasiStability} we follow the analysis presented in \cite{DuanGaoTanZhang}, by firstly showing two preliminary results making use of the properties presented in Sect.~\ref{sec:preliminary}.
\begin{lemma} \label{lem:proprieta1}
Let Assumptions~\ref{ass1} - \ref{ass4} hold, and let $\widetilde{\Pi}_j$ be the projection operator on $V_j$ as defined in Lemma~\ref{lem:interpDG}, for $j=J,J-1$. Then
\begin{equation}
\normL[\widetilde{\Pi}_J w - I_{J-1}^J \widetilde{\Pi}_{J-1} w][2][\Om] \lesssim \frac{h_J^2}{p_J^2}  \normH[w][2][\Om] \quad \forall w \in H^2(\Om).
\end{equation}
\end{lemma}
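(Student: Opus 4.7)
The plan is to prove the bound by triangle inequality, introducing $w$ as an intermediate quantity and then using Lemma~\ref{lem:interpDG} on both levels $J$ and $J-1$, together with the identification $I_{J-1}^J=Q_J|_{V_{J-1}}$ from Remark~\ref{rmrk:eqQJandIJ}.

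Concretely, I would first write
\[
\|\widetilde{\Pi}_J w - I_{J-1}^J \widetilde{\Pi}_{J-1} w\|_{L^2(\Omega)} \le \|\widetilde{\Pi}_J w - w\|_{L^2(\Omega)} + \|w - I_{J-1}^J \widetilde{\Pi}_{J-1} w\|_{L^2(\Omega)}.
\]
The first summand is immediately controlled by $h_J^2 p_J^{-2}\|w\|_{H^2(\Omega)}$ by applying Lemma~\ref{lem:interpDG} with $q=0$ and $k=2$, which is exactly the desired order.

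For the second summand, I would use $I_{J-1}^J\widetilde{\Pi}_{J-1}w = Q_J\widetilde{\Pi}_{J-1}w$ and insert $Q_J w$:
\[
\|w - Q_J \widetilde{\Pi}_{J-1} w\|_{L^2(\Omega)} \le \|w - Q_J w\|_{L^2(\Omega)} + \|Q_J(w-\widetilde{\Pi}_{J-1}w)\|_{L^2(\Omega)}.
\]
Now exploit two properties of $Q_J$: (i) it is the best $L^2$-approximation in $V_J$, so $\|w-Q_J w\|_{L^2(\Omega)}\le \|w-\widetilde{\Pi}_J w\|_{L^2(\Omega)}$, which is again controlled by Lemma~\ref{lem:interpDG} on level $J$; and (ii) it is non-expansive in $L^2$, so $\|Q_J(w-\widetilde{\Pi}_{J-1}w)\|_{L^2(\Omega)} \le \|w-\widetilde{\Pi}_{J-1}w\|_{L^2(\Omega)} \lesssim h_{J-1}^2 p_{J-1}^{-2}\|w\|_{H^2(\Omega)}$ by Lemma~\ref{lem:interpDG} on level $J-1$.

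Finally, I would invoke the quasi-uniformity of consecutive levels, namely $h_{J-1}\approx h_J$ from \eqref{eq:hjhj1} and $p_{J-1}\approx p_J$ from \eqref{eq:pjpj1}, to convert the level $J-1$ bound into $h_J^2 p_J^{-2}\|w\|_{H^2(\Omega)}$, and then combine the three contributions. There are no real obstacles here: the only slightly non-routine step is the use of the $L^2$-best-approximation/non-expansiveness of $Q_J$, which lets me avoid having to analyse the cross-mesh interaction of $I_{J-1}^J$ directly; everything else reduces to a direct application of Lemma~\ref{lem:interpDG} on each of the two consecutive levels combined with the mesh and polynomial degree equivalences.
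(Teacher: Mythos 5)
Your argument is correct and is essentially identical to the paper's own proof: the same three-term decomposition via $Q_J w$ (you just arrange it as two successive triangle inequalities), the same use of $I_{J-1}^J=Q_J|_{V_{J-1}}$, the $L^2$-best-approximation and non-expansiveness of $Q_J$, Lemma~\ref{lem:interpDG} on both levels, and the equivalences \eqref{eq:hjhj1}--\eqref{eq:pjpj1}. No gaps.
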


\begin{proof}
Using the triangular inequality, Remark~\ref{rmrk:eqQJandIJ} and the approximation estimates of Lemma~\ref{lem:interpDG} we have:
\begin{align}
\| \widetilde{\Pi}_J  w -  I_{J-1}^J &\widetilde{\Pi}_{J-1} w  \|_{ L^2(\Om)} \leq \\& \leq \| \widetilde{\Pi}_J w -  w\|_{L^2(\Om)}  + \| w - Q_Jw\|_{L^2(\Om)} + \| Q_J w - I_{J-1}^J \widetilde{\Pi}_{J-1} w\|_{L^2(\Om)}\\
& = \| \widetilde{\Pi}_J w -  w\|_{L^2(\Om)}  + \min_{z_h \in V_J}\|w - z_h\|_{L^2(\Om)} + \|Q_J (w -  \widetilde{\Pi}_{J-1} w)\|_{L^2(\Om)} \\
& \leq \normL[\widetilde{\Pi}_J w -  w][2][\Om]  + \normL[\ w - \widetilde{\Pi}_J w][2][\Om] + \normL[w -  \widetilde{\Pi}_{J-1} w][2][\Om] \\
& \lesssim  \frac{h_J^2}{p_J^2}  \normH[w][2][\Om] +  \frac{h_{J-1}^2}{p_{J-1}^2}  \normH[w][2][\Om] \lesssim \frac{h_J^2}{p_J^2}  \normH[w][2][\Om],
\end{align}
where in the last inequality we also used hypothesis \eqref{eq:hjhj1} and \eqref{eq:pjpj1}.
\end{proof}

\begin{lemma}\label{lem:QuasiQuasiApprox}
Let Assumptions~\ref{ass1} - \ref{ass4} hold. Let be $g \in L^2(\Om)$ and denote by $w_j \in V_j$ the solution of $\Aa[j][w_j][v] = (g,v)_{L^2(\Omega)}$ $\forall v \in V_j$ with $j=J-1,J$. Then the following inequality holds:
\begin{equation}
\normL[w_J - I_{J-1}^Jw_{J-1}][2][\Om] + \normL[w_{J-1} - P_J^{J-1}w_J][2][\Om] \lesssim \frac{h_J^2}{p_J^{2-\mu}} \normL[g][2][\Om].
\label{eq:QQA}
\end{equation}
\end{lemma}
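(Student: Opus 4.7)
The plan is to anchor both estimates to the continuous solution $u\in H^2(\Om)\cap H^1_0(\Om)$ of $-\Delta u=g$: by the elliptic regularity \eqref{eq:elliptic_reg} one has $\normH[u][2][\Om]\lesssim\normL[g][2][\Om]$, and $w_J$, $w_{J-1}$ are precisely the DG approximations to $u$ on levels $J$ and $J-1$. For the first term I insert the intermediaries $\widetilde{\Pi}_J u$ and $I_{J-1}^J\widetilde{\Pi}_{J-1}u$ and apply the triangle inequality:
\begin{align*}
\normL[w_J-I_{J-1}^J w_{J-1}][2][\Om]
&\le\normL[w_J-\widetilde{\Pi}_J u][2][\Om]+\normL[\widetilde{\Pi}_J u-I_{J-1}^J\widetilde{\Pi}_{J-1}u][2][\Om]\\
&\quad+\normL[I_{J-1}^J(\widetilde{\Pi}_{J-1}u-w_{J-1})][2][\Om].
\end{align*}
The first contribution is bounded by Theorem~\ref{thm:errors} and Lemma~\ref{lem:interpDG}; the second is exactly Lemma~\ref{lem:proprieta1}; for the third, Remark~\ref{rmrk:eqQJandIJ} gives $I_{J-1}^J=Q_J|_{V_{J-1}}$, hence $I_{J-1}^J$ is an $L^2$-orthogonal projection and a non-expansion, reducing the estimate to the level-$(J-1)$ analogue of the first contribution. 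Combining via \eqref{eq:hjhj1}--\eqref{eq:pjpj1} yields the bound $\tfrac{h_J^2}{p_J^{2-\mu}}\normL[g][2][\Om]$ for the first summand.

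For the second term I use an Aubin--Nitsche-type duality argument. Set $e=w_{J-1}-P_J^{J-1}w_J\in V_{J-1}$, let $\tilde\psi\in H^2(\Om)\cap H^1_0(\Om)$ solve $-\Delta\tilde\psi=e$ (so that $\normH[\tilde\psi][2][\Om]\lesssim\normL[e][2][\Om]$), and let $\psi_{J-1}\in V_{J-1}$ be its DG approximation on level $J-1$, i.e.\ $\Aa[J-1][\psi_{J-1}][v_H]=(e,v_H)_{L^2(\Om)}$ for every $v_H\in V_{J-1}$. Symmetry of $\Aa[J-1]$, the defining identity $\Aa[J-1][P_J^{J-1}w_J][v_H]=\Aa[J][w_J][I_{J-1}^Jv_H]$, and the Galerkin relations for $w_{J-1}$ and $w_J$ (applied to the test function $I_{J-1}^J\psi_{J-1}\in V_J$) together give
\[
\normL[e][2][\Om][2]=\Aa[J-1][e][\psi_{J-1}]=(g,\psi_{J-1}-I_{J-1}^J\psi_{J-1})_{L^2(\Om)}.
\]
By Cauchy--Schwarz it remains to bound $\normL[\psi_{J-1}-I_{J-1}^J\psi_{J-1}][2][\Om]=\normL[\psi_{J-1}-Q_J\psi_{J-1}][2][\Om]$. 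The best-approximation property of $Q_J$ majorizes this quantity by $\normL[\psi_{J-1}-\widetilde{\Pi}_J\tilde\psi][2][\Om]\le\normL[\psi_{J-1}-\tilde\psi][2][\Om]+\normL[\tilde\psi-\widetilde{\Pi}_J\tilde\psi][2][\Om]$, which, thanks to Theorem~\ref{thm:errors} on level $J-1$, Lemma~\ref{lem:interpDG} on level $J$, and $\normH[\tilde\psi][2][\Om]\lesssim\normL[e][2][\Om]$, is $\lesssim\tfrac{h_J^2}{p_J^{2-\mu}}\normL[e][2][\Om]$. Dividing by $\normL[e][2][\Om]$ closes the second bound.

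The main obstacle is producing the duality identity in the non-nested setting: because $V_{J-1}\not\subset V_J$, one must deliberately balance an $\Aa[J-1]$-test function on the coarse level against its prolongation $I_{J-1}^J\psi_{J-1}\in V_J$ on the fine level, so the identity $\normL[e][2][\Om][2]=(g,\psi_{J-1}-I_{J-1}^J\psi_{J-1})_{L^2(\Om)}$ crucially relies on the non-conforming Galerkin relation built into the definition of $P_J^{J-1}$. The remaining ingredients (elliptic regularity, DG error estimates, interpolation bounds, and $L^2$-non-expansiveness of $I_{J-1}^J$) are routine. The suboptimal $p$-factor $\mu\in\{0,1\}$ in the final bound is inherited entirely from Theorem~\ref{thm:errors}: the pure interpolation contributions from Lemmas~\ref{lem:interpDG} and~\ref{lem:proprieta1} carry the better rate $h_J^2/p_J^2$ and are absorbed.
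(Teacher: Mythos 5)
Your proof is correct and follows essentially the same route as the paper: a triangle inequality through the continuous solution $u$, the interpolants $\widetilde{\Pi}_J u$, $I_{J-1}^J\widetilde{\Pi}_{J-1}u$ and Lemma~\ref{lem:proprieta1} for the first term, together with the elliptic regularity bound, and for the second term the identical duality identity $\|e\|_{L^2(\Omega)}^2=(g,\psi_{J-1}-I_{J-1}^J\psi_{J-1})_{L^2(\Omega)}$ obtained from the symmetry of the bilinear forms and the defining relation of $P_J^{J-1}$. The only (harmless) deviation is that you bound $\|\psi_{J-1}-I_{J-1}^J\psi_{J-1}\|_{L^2(\Omega)}$ directly via the best-approximation property of $Q_J$ on $V_J$, whereas the paper re-invokes its first estimate; both close the argument in the same way.
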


\begin{proof}
Consider the unique solution $w \in V$ of the problem 
\begin{equation}
\mathcal{A}(w,v) = (g,v)_{L^2(\Omega)} \quad \forall v \in V.
\end{equation}
Using Theorem~\ref{thm:errors} we have 
\begin{equation}
\normL[w - w_j][2][\Om] \lesssim \frac{h_j^2}{p_j^{2-\mu}} \normL[w][2][\Om], \quad j = J-1,J.
\label{eq:ProofQQA1}
\end{equation}
Using the triangular inequality and Remark~\ref{rmrk:eqQJandIJ} we have:
\begin{align}
\|w_J - I_{J-1}^Jw_{J-1}&\|_{L^2(\Om)}  \leq \normL[w_J - w][2][\Om] + \normL[w - \widetilde{\Pi}_J w][2][\Om]  \\ 
& \quad + \normL[\widetilde{\Pi}_Jw - I_{J-1}^J \widetilde{\Pi}_{J-1} w][2][\Om] + \normL[I_{J-1}^J \widetilde{\Pi}_{J-1}w - Q_J w][2][\Om] \\ & \quad + \normL[Q_J w - I_{J-1}^Jw_{J-1} ][2][\Om] \\
& = \normL[w_J - w][2][\Om] + \normL[w - \widetilde{\Pi}_J w][2][\Om] + \normL[\widetilde{\Pi}_Jw - I_{J-1}^J \widetilde{\Pi}_{J-1} w][2][\Om] \\ 
& \quad + \normL[Q_J (\widetilde{\Pi}_{J-1}w - w)][2][\Om] + \normL[Q_J(w - w_{J-1}) ][2][\Om]  \\
& \leq \normL[w_J - w][2][\Om] + \normL[w - \widetilde{\Pi}_J w][2][\Om] + \normL[\widetilde{\Pi}_Jw - I_{J-1}^J \widetilde{\Pi}_{J-1} w][2][\Om] \\ 
& \quad + \normL[ \widetilde{\Pi}_{J-1}w - w][2][\Om] + \normL[ w - w_{J-1} ][2][\Om]. 
\end{align}
Using \eqref{eq:ProofQQA1}, Lemma~\ref{lem:interpDG} and Lemma~\ref{lem:proprieta1}, we have
 \begin{align}
\normL[w_J - I_{J-1}^Jw_{J-1}][2][\Om] & \lesssim  \frac{h_J^2}{p_J^{2-\mu}} \normH[w][2][\Om] +  \frac{h_J^2}{p_J^2}  \normH[w][2][\Om] +  \frac{h_J^2}{p_J^2}  \normH[w][2][\Om] \\
& \quad +  \frac{h_{J-1}^2}{p_{J-1}^2}  \normH[w][2][\Om] +  \frac{h_{J-1}^2}{p_{J-1}^{2-\mu}} \normH[w][2][\Om].
\end{align}
From the elliptic regularity assumption \eqref{eq:elliptic_reg} and hypothesis \eqref{eq:hjhj1} and \eqref{eq:pjpj1}, we can write
\begin{equation}\label{eq:ProofQQA2}
\normL[w_J - I_{J-1}^Jw_{J-1}][2][\Om]  \lesssim \frac{h_J^2}{p_J^{2-\mu}}  \normL[g][2][\Om].
\end{equation}
\noindent Now, let $z_j \in V_j$ be the solution of:
\begin{equation}
\Aa[j][z_j][q] = (w_{J-1} - P_J^{J-1} w_J, q_j)_{L^2(\Omega)} \quad \forall q_j \in V_j, \quad j = J-1,J;
\end{equation}
\noindent  Using \eqref{eq:ProofQQA2} we get the following estimate:
\begin{equation}
\normL[z_{J-1} - I_{J-1}^J z_{J-1}][2][\Om] \lesssim \frac{h_J^2}{p_J^{2-\mu}}  \normL[w_{J-1} - P_J^{J-1}w_J][2][\Om].
\end{equation}
\noindent Then, we have:
\begin{align}
\normL[w_{J-1} - P_J^{J-1} w_J][2][\Om]^2  & = \Aa[J-1][z_{J-1}][w_{J-1} - P_J^{J-1} w_J]  \\
& = \Aa[J-1][z_{J-1}][w_{J-1}] - \Aa[J][I_{J-1}^J z_{J-1}][w_J] \\
& = (z_{J-1},g) - (I_{J-1}^J z_{J-1},g) = (g,z_{J-1} - I_{J-1}^J z_{J-1}) \\
& \lesssim \normL[g][2][\Om] \frac{h_J^2}{p_J^{2-\mu}} \normL[w_{J-1} - P_J^{J-1} w_J][2][\Om],
\end{align}
from which, together with \eqref{eq:ProofQQA2}, inequality~\eqref{eq:QQA} follows.
\end{proof}

\begin{proof}[of Lemma~\ref{lem:QuasiStability}]
For any $v_J \in V_J$ we consider the following equality:
\begin{equation}\label{eq:ProofQA1}
\normL[(\textnormal{Id}_J - I_{J-1}^J P_J^{J-1})v_J][2][\Om] = \sup_{0 \ne \phi \in L^2(\Om)}\frac{\bigl( \phi, (\textnormal{Id}_J - I_{J-1}^J P_J^{J-1})v_J \bigl)_{L^2(\Om)}}{\normL[\phi][2]}.
\end{equation}
Next, consider the solution $z_j$ of the following problems
\begin{equation}
\Aa[j][z_j][v_j] = \bigl( \phi, v_j \bigr) \quad \forall v_j \in V_j, \text{ for }j=J,J-1.
\end{equation}
By using the definition of $P_J^{J-1}$ and Lemma~\ref{lem:QuasiQuasiApprox}, we have:
\begin{align}
\bigl(\phi, (\textnormal{Id}_J - I_{J-1}^J & P_J^{J-1}) v_J)  \bigr)_{L^2(\Om)}   = \Aa[J][z_J][v_J] - \Aa[J-1][P_{J}^{J-1}z_J][P_{J}^{J-1}v_J] \\
& = \Aa[J][z_J - I_{J-1}^J z_{J-1}][v_J] + \Aa[J][I_{J-1}^J (z_{J-1} - P_J^{J-1}z_J)][v_J]\\
& \le \ltrivert v_J \rtrivert_{2,J} \Bigl( \normL[z_J - I_{J-1}^J z_{J-1}][2] + \normL[z_{J-1} - P_{J}^{J-1} z_{J}][2] \Bigr) \\
& \lesssim \ltrivert v_J \rtrivert_{2,J} \frac{h_J^2}{p_J^{2-\mu}} \normL[\phi][2].
\end{align}
Using the last inequality together with~\eqref{eq:ProofQA1} we get \eqref{eq:QuasiApprox}.
\end{proof}


\end{document}